\numberwithin{equation}{section}
\newtheorem{theorem}{Theorem}[section]
\newtheorem{definition}[theorem]{Definition}
\newtheorem{lemma}[theorem]{Lemma}
\newtheorem{corollary}[theorem]{Corollary}
\newtheorem{proposition}[theorem]{Proposition}
\newtheorem{definition and theorem}[theorem]{Definition and Theorem}
\def\bl{\begin{lemma}}
\def\el{\end{lemma}}
\def\bc{\begin{corollary}}
\def\ec{\end{corollary}}
\def\bt{\begin{theorem}}
\def\et{\end{theorem}}
\def\bp{\begin{proposition}}
\def\ep{\end{proposition}}
\def\be{\begin{equation}}
\def\ee{\end{equation}}
\def\baa{\begin{align*}}
\def\eaa{\end{align*}}
\def\bd{\begin{definition}}
\def\ed{\end{definition}}
\theoremstyle{plain}
\newtheorem{thm}{Theorem}[section]
\newtheorem{ex}[thm]{Example}
\newtheorem*{Thm1}{Theorem 1}
\newtheorem*{Thm2}{Theorem 2}
\newtheorem*{Thm3}{Theorem 3}
\newtheorem{lem}[thm]{Lemma}
\newtheorem{defn}[thm]{Definition}
\theoremstyle{remark}
\newcommand{\lsub}[1]{\hskip -1.0pt\lower.3ex\hbox{$_{#1}$}}
\theoremstyle{definition}
\newcommand{\sn}{\mathbb S^{n-1}}
\newcommand{\tr}{\mathbb R}
\newcommand{\rn}{\mathbb R^n}
\newcommand{\tH}{\mathcal{H}}
\newcommand{\di}{\int_{\rn}\int_{\rn}}
\newcommand{\rR}{{\rm R}}
\newcommand{\Ra}{{\rm R}_{\alpha}}
\newcommand{\Raf}{{\rm R}_{\alpha} f}
\newcommand{\Pia}{\Pi^{*,-\alpha}}
\newcommand{\Laf}{{\rm L}_{\alpha} f}
\newcommand{\La}{{\rm L}_{\alpha}}
\newcommand{\supp}{{\rm supp}}
\renewcommand{\chi}{\operatorname{1}}
\title[Affine chord Sobolev inequalities and radial mean bodies for functions]{Affine  chord Sobolev inequalities and radial mean bodies for functions}
\author{Fernanda M. Ba$\hat{\rm E}$ta \quad  and  \quad Xiaxing Cai}
\begin{document}

\begin{abstract}
Affine isoperimetric inequalities for functional radial mean bodies are
established via a new family of affine chord Sobolev inequalities. Through
these inequalities, the recent affine isoperimetric inequalities of Haddad
and Ludwig \cite{HL1} are extended from convex bodies to functions, and the
Euclidean chord Sobolev inequalities introduced in \cite{BC} are
strengthened. For compactly supported \(s\)-concave functions with \(s>0\),
sharp monotonicity and reverse monotonicity results for functional radial
mean bodies are obtained, with the equality cases characterized. As
consequences, the corresponding monotonicity theorems for geometric radial
mean bodies due to Gardner and Zhang \cite{GZ} are extended.

%Affine isoperimetric inequalities for the functional radial mean bodies are derived from the new affine chord Sobolev inequalities, which extend the recent affine isoperimetric inequalities of Haddad and Ludwig \cite{HL1} from convex bodies to functions. The affine chord Sobolev inequalities further imply a strengthening of the Euclidean chord Sobolev inequalities introduced in \cite{BC}. For compactly supported $s$-concave functions with $s>0$, sharp monotonicity and reverse monotonicity results for these bodies are established, including the equality characterization. These results extend the corresponding monotonicity theorems for geometric radial mean bodies due to Gardner and Zhang.
\end{abstract}

\maketitle

\section{Introduction}
Affine inequalities, invariant under translations and volume-preserving linear transformations, often strengthen classical Euclidean inequalities. A seminal example is Zhang's affine Sobolev inequality \cite{Zh2}, which significantly improves the classical Sobolev inequality. Wang \cite{Wa} later extended this inequality to functions of bounded variation, yielding a generalized Petty projection inequality (see also \cite{Zh2}), itself a strengthening of the classical isoperimetric inequality. Recently, Milman and Yehudayoff \cite{MY} established affine isoperimetric inequalities for affine quermassintegrals, settling a long-standing conjecture of Lutwak \cite{Lut2}. For further developments on affine inequalities, see, for example, \cite{Bo, Cai, CLYZ, GZ, HS, HJM, HLPRY2, HLPRY,  LYZ1, LYZ2, LYZ4}.

More recently, Haddad and Ludwig \cite{HL1, HL2, HL3} developed a new approach to study affine inequalities. They proved the following sharp affine fractional Sobolev inequalities in \cite{HL1}, which are  stronger than their Euclidean counterparts by Almgren and Lieb \cite{AL} (see also \cite{FS}). 
\vskip 5pt

\noindent{\bf Affine fractional Sobolev inequalities:} For $\alpha\in (-1,0)$ and  $f\in W^{-\alpha,1}(\rn)$,
\begin{equation}\label{aff frac-Sob}
\begin{aligned}
    2\sigma_{n,\alpha}\|f\|_{\frac{n}{n+\alpha}}\leq n\omega_n^{\frac{n-\alpha}{n}}\Big(\frac{1}{n}\int_{\sn}\Big(\int_0^{\infty}&r^{\alpha-1}\|f(\cdot)-f(\cdot+ru)\|_1dr\Big)^{\frac{n}{\alpha}}du\Big)^{\frac{\alpha}{n}}\\&
    \leq \di\frac{|f(x)-f(y)|}{|x-y|^{n-\alpha}}dxdy,
\end{aligned}
\end{equation}
where $\sn$ is the unit sphere in $\rn$ and $\omega_n$ denotes the volume of the unit ball in  $\rn$. 
The sharp constant $\sigma_{n,\alpha}$ is given by
\begin{equation}\label{sigma-alpha}
    \sigma_{n,\alpha}
    =\frac{n 2^{\alpha}\pi^{\frac{n-\alpha-1}{2}}
    \Gamma(\frac{\alpha+1}{2})\Gamma(\frac{n}{2}+1)^{\frac{\alpha}{n}}}
    {|\alpha|\Gamma(\frac{n+\alpha+1}{2})}.
\end{equation}
Here $\Gamma$ denotes the gamma function, $\|f\|_p:=(\int_{\rn}f(x)^pdx)^{1/p}$ denotes the $L^p$ norm of $f$ and $W^{-\alpha,1}(\rn)$ is the space of integrable functions $f$ such that the right-most side of \eqref{aff frac-Sob} is finite.

Eliminating the intermediate term in \eqref{aff frac-Sob} gives the fractional Sobolev inequalities, which converge to the classical Sobolev inequality when $\alpha\rightarrow -1^+$, as shown by Bourgain, Brezis and Mironescu \cite{BBM}. Haddad and Ludwig \cite{HL1} showed that \eqref{aff frac-Sob} becomes the affine Sobolev inequality in the limit $\alpha\rightarrow -1^+$. The fractional Sobolev inequalities have been extensively studied over the past decades (see, e.g., \cite{ACPS, BBM, Da, Kr, Lud1, Lud2}). In  very recent work by the authors \cite{BC}, the following sharp chord Sobolev inequalities for positive $\alpha$ are established.
\vskip 5pt

\noindent{\bf Chord Sobolev inequalities for $\boldsymbol{\alpha>0}$:} 
    For $\alpha\in (0,n)$ and non-negative $f\in L^{\frac{n}{n+\alpha}}(\rn)$, 
    \begin{equation}\label{chord-Sob}
        \sigma_{n,\alpha}\|f\|_{\frac{n}{n+\alpha}}\ge \di\frac{\min\{f(x),f(y)\}}{|x-y|^{n-\alpha}}dxdy.
    \end{equation}
    For $\alpha>n$ and non-zero, non-negative $f\in L^1(\rn)\cap L^{\infty}(\rn)$,
    \begin{equation}\label{chord-Sob>n}
        \sigma_{n,\alpha}\|f\|_1^{\frac{n+\alpha}{n}}\|f\|_{\infty}^{-\frac{\alpha}{n}}\leq \di \frac{\min \{f(x), f(y)\} }{|x-y|^{n-\alpha}}dxdy,
    \end{equation}
    where $\sigma_{n,\alpha}$ is given by \eqref{sigma-alpha}. In both cases, equality holds if and only if $f$ is a constant multiple of the characteristic function of a ball.

In \cite{BC}, fractional Sobolev inequalities, together with \eqref{chord-Sob} and \eqref{chord-Sob>n}, form a complete series called the chord Sobolev inequalities. When $f=\chi_K$ is the characteristic function of a convex body $K\subset\rn$ (a convex compact set with non-empty interior), these inequalities reduce to the isoperimetric inequalities for chord power integrals; see \cite{BC} for details. Chord power integrals have been studied extensively in recent years \cite{BC, Cai2, GXZ, LXYZ2020, XYZZ, XZ}.

One of the main aims of this paper is to prove the following affine chord Sobolev inequalities for $\alpha>0$, which strengthen their Euclidean counterparts \eqref{chord-Sob} and \eqref{chord-Sob>n}. Here $f$ is radially symmetric if its value depends only on $|x|$.

\begin{Thm1}\label{aff chord Sob}
    For $\alpha\in(0,n)$ and non-negative $f\in L^{\frac{n}{n+\alpha}}(\rn)$, 
    \begin{equation}\label{AI-chord}
        \begin{aligned}
        \sigma_{n,\alpha}\|f\|_{\frac{n}{n+\alpha}}\ge n\omega_n^{\frac{n-\alpha}{n}}\Big
        (\frac{1}{n}\int_{\sn}\Big(\int_0^{\infty} r^{\alpha-1}&\big\|\min\{f(\cdot), f(\cdot +ru)\}\big\|_1dr\Big)^{\frac{n}{\alpha}}du\Big)^{\frac{\alpha}{n}}\\&\ge
            \di\frac{\min\{f(x),  f(y)\}}{|x-y|^{n-\alpha}}dxdy.
        \end{aligned}
    \end{equation}
    There is equality in the first inequality if and only if $f$ is a constant multiple of the characteristic function of an ellipsoid. There is equality in the second inequality if $f$ is radially symmetric. 
    \end{Thm1}

\begin{Thm2}\label{theo alpha>n}
For $\alpha>n$ and non-zero, non-negative $f\in L^1(\mathbb{R}^n)\cap L^\infty(\mathbb{R}^n)$, 
    \begin{equation}\label{AI-chord>n}
        \begin{aligned}
        \sigma_{n,\alpha}\|f\|_1^{\frac{n+\alpha}{n}}\|f\|_{\infty}^{-\frac{\alpha}{n}}\le n\omega_n^{\frac{n-\alpha}{n}}\Big
        (\frac{1}{n}\int_{\sn}\Big(\int_0^{\infty} r^{\alpha-1}&\big\|\min\{f(\cdot), f(\cdot +ru)\}\big\|_1dr\Big)^{\frac{n}{\alpha}}du\Big)^{\frac{\alpha}{n}}\\&\le
            \di\frac{\min\{f(x),  f(y)\}}{|x-y|^{n-\alpha}}dxdy.
        \end{aligned}
    \end{equation}
    There is equality in the first inequality if and only if $f$ is a constant multiple of the characteristic function of an ellipsoid. There is equality in the second inequality if $f$ is radially symmetric.

\end{Thm2}

    The affine fractional Sobolev inequalities \eqref{aff frac-Sob}, together with \eqref{AI-chord} and \eqref{AI-chord>n}, are called affine chord Sobolev inequalities. To prove Theorem 1 and Theorem 2, we introduce an anisotropic version of the right-most side of \eqref{AI-chord} (see, e.g., \cite{HL1, Lud1, Lud2}),  which gives rise to a new star-shaped set $\Laf$, defined via its radial function by
\begin{equation*}
    \rho_{\Laf}(u)^{\alpha}=\int_0^{\infty}r^{\alpha-1}\int_{\rn}\min\{f(x),f(x+ru)\}dxdr,~~u\in\sn,
\end{equation*}
(see Section \ref{Laf} for details). The first inequality in Theorem 1 can then be written as
\begin{equation}\label{geo-ai-chord}
    \sigma_{n,\alpha}\|f\|_{\frac{n}{n+\alpha}}\ge n\omega_n^{\frac{n-\alpha}{n}}|\Laf|^{\frac{\alpha}{n}},
\end{equation}
where $|\Laf|$ denotes the volume of $\Laf$. Since both sides of \eqref{geo-ai-chord} are invariant under translations of $f$ and
\[\La(f\circ\phi^{-1})=\phi\Laf\]
for every volume-preserving linear transformation $\phi:\rn\rightarrow\rn$, it follows that inequality \eqref{geo-ai-chord} is affine invariant.

For a convex body $K\subset\rn$, the star-shaped set $\La\chi_K$ is proportional to the radial mean body $\Ra K$ of Gardner and Zhang \cite{GZ}, as will be discussed in Section \ref{Radial mean bodies for log-concave functions}. Haddad and Ludwig \cite{HL1} proved affine isoperimetric inequalities for $\Ra K$ with $\alpha>-1$, stating that

    \begin{equation}\label{HL1-20}
        \begin{aligned}
            \frac{|\Ra K|}{|K|}&\leq \frac{|\Ra B^n|}{|B^n|},&& \alpha\in(-1,n),\\[0.5em]
            \frac{|\Ra K|}{|K|}&\geq \frac{|\Ra B^n|}{|B^n|},&& \alpha>n.
        \end{aligned}
    \end{equation}
    Equality holds if and only if $K$ is an ellipsoid.

In \cite{BC}, the authors gave the following level-set formulation of the radial mean body $\Raf$ for log-concave functions \(f\in L^1(\rn)\):
\begin{equation}\label{Raf-intro}
    \rho_{\Raf}(u)^{\alpha}=\int_0^{\|f\|_{\infty}}\rho_{\Ra \{f\ge t\} }(u)^{\alpha}d\mu_f(t),~\alpha\neq0,
\end{equation}
and
\begin{equation*}
    \log\rho_{\rR_0f}(u)=\int_0^{\|f\|_\infty}\log\rho_{\rR_0 \{f\ge t\} }(u)d\mu_f(t),
\end{equation*}
where $d\mu_f(t)=\frac{|\{f\ge t\}|}{\|f\|_1}dt$ is a probability measure on $\tr$. 

We note that Haddad and Ludwig \cite{HL3} also use the notation $\Raf$ for the $\alpha$-th radial mean body, but with a different definition. The present definition can be viewed as an $L^1$ extension to their definitions. We also refer the reader to \cite{LSU}, where functional higher order radial mean bodies were first introduced by a different construction; in particular,
the corresponding \(L^1\) case coincides with the body defined above.

It was shown in \cite{BC} that $\Raf$ is proportional to $\Laf$ for $\alpha>0$, while for $\alpha\in(-1,0)$ it is proportional to the fractional polar projection body $\Pi^{*,-\alpha}f$ arising from the affine fractional Sobolev inequalities \eqref{aff frac-Sob}. Consequently, affine chord Sobolev inequalities yield the following affine isoperimetric inequalities for $\Raf$: for a log-concave function $f\in L^{1}(\rn)$,
\begin{equation}\label{AI-Raf}
\begin{aligned}
\frac{|\Ra f|}{\|f\|_{\frac{1}{2}}\|f\|_1^{-1}} &\leq \frac{|\Ra B^n|}{|B^n|}, && \alpha\in(-1,n),\\[0.5em]
\frac{|\Ra f|}{\|f\|_1\|f\|_{\infty}^{-1}} &\geq \frac{|\Ra B^n|}{|B^n|}, && \alpha>n,
\end{aligned}
\end{equation}
with equality if and only if $f$ is a constant multiple of the characteristic function of an ellipsoid. In particular, when $f=\chi_K$ is the characteristic function of a convex body, \eqref{AI-Raf} reduces to the geometric inequality \eqref{HL1-20}.

Of particular interest, for log-concave $f\in L^1(\rn)$, Haddad and Ludwig \cite{HL1} proved
\[
\lim_{\alpha\rightarrow -1^+}(1+\alpha)\|f\|_1\rho_{\Raf}(u)^{\alpha}=\rho_{\Pi^*f}(u)^{-1},
\]
where $\Pi^*f$ is the polar projection body of $f$ introduced by Zhang \cite{Zh2} and extended by Wang \cite{Wa} (see Section \ref{frac ppb}). For the other endpoint $\alpha\to\infty$, the monotonicity of $L^p$ norms with respect to $p$, together with \eqref{Raf-intro}, leads to the definition of $\rR_{\infty}f$: 
\[
\rR_{\infty} f = \rR_{\infty}\,\supp(f) = {\rm D}\,\supp(f),
\]
where ${\rm D}K := \{x-y: x,y\in K\}$ denotes the difference body, and we set $\rR_\infty f = \rn$ if $\supp(f)=\rn$. In Section \ref{Radial mean bodies for log-concave functions}, we prove the following monotonicity for log-concave $f\in L^1(\rn)$:
\begin{equation}\label{increasing}
    \|f\|_{\infty}\Pi^*f\subset \Big((1+\alpha)\frac{\|f\|_1}{\|f\|_{\infty}}\Big)^{1/\alpha}\Raf\subset \Big((1+\beta)\frac{\|f\|_1}{\|f\|_{\infty}}\Big)^{1/\beta}\rR_{\beta} f\subset \rR_{\infty} f,
\end{equation}
for $-1<\alpha<\beta<\infty$. When $f=\chi_K$, these reduce to the geometric case of Gardner and Zhang~\cite{GZ}.

Gardner and Zhang \cite{GZ} also established the reverse inclusion for radial mean bodies, which is more involved. For a convex body $K\subset\rn$, they proved
\begin{equation}\label{GZ-mian5.5}
    {\rm D}K\subset c_{n,\beta}\rR_{\beta} K\subset c_{n,\alpha}\Ra K\subset n|K|\Pi^* K,
\end{equation}
for $-1<\alpha<\beta<\infty$, where $c_{n,\alpha}=(nB(\alpha+1,n))^{-1/\alpha}$ and $B(\cdot,\cdot)$ is the Beta function. Equality holds in each inclusion if and only if $K$ is a simplex. When $\beta=n$, the left-most inclusion gives the Rogers--Shephard inequality \cite{RS}, while for $\alpha=n$, the right-most inclusion yields Zhang's inequality \cite{Zh1} (see also Ball \cite{ball}).

Building on the functional radial mean body $\Raf$ and the monotonicity result \eqref{increasing}, we aim to extend the reverse inclusion \eqref{GZ-mian5.5} from sets to functions, stated as below.
\vskip 5pt

\noindent{\bf Question.} For a non-zero, log-concave function $f \in L^1(\mathbb{R}^n)$ with compact support, does
\[\rR_{\infty}f\subset c_{n,\beta}\rR_{\beta}f\subset c_{n,\alpha}\Raf\subset n\|f\|_1\Pi^*f\]
hold for $-1<\alpha<\beta<\infty$, with equality if and only if $f=c\chi_{\triangle}$? Here $c_{n,\alpha}$ is the constant in \eqref{GZ-mian5.5}, $c>0$ and $\triangle$ denotes a simplex in $\rn$.

Unfortunately, this inclusion fails for general log-concave functions, as we show by a counterexample in Section \ref{Reversed inequalities}. This failure leads us  to consider $s$-concave functions with $s>0$. 

Langharst, Mar\'in Sola and Ulivelli \cite{LSU} obtained related inclusion results through a different Ball-body framework; see also the discussion in Section \ref{Reversed inequalities}. Our approach is based on the level-set framework and gives a direct proof of the inclusions \eqref{main3-reversed inclusion} considered here, together with the equality characterization stated in Theorem 3.

\begin{Thm3}
Let $s>0$ and let $f\in L^1(\mathbb{R}^n)$ be a non-zero, non-negative, $s$-concave function with compact support.  
For $-1<\alpha<\beta<\infty$, the following chain of inclusions holds:
\begin{equation}\label{main3-reversed inclusion}
    \rR_{\infty} f
    \subset
    c_{n,\beta}(s)\,\rR_{\beta} f
    \subset
    c_{n,\alpha}(s)\,\rR_{\alpha} f
    \subset
    \Big(n+\frac{1}{s}\Big)\|f\|_{1}\,\Pi^{*}f,
\end{equation}
where $c_{n,\alpha}(s)=\bigl((n+\tfrac{1}{s})\,B(\alpha+1,n+\tfrac{1}{s})\bigr)^{-1/\alpha}$. If $f$ is upper semicontinuous, there is equality in each inclusion if and only if $f(x)=a(1-\|x-x_0\|_{\triangle-x_0})_+^{1/s}$, where $x_0$ is the maximizer of $f$ with $a=f(x_0)=\|f\|_{\infty}$, and $\triangle$ is a simplex in $\rn$ containing $x_0$.
\end{Thm3}

 Letting $\beta=n$ and $\alpha=n$ in \eqref{main3-reversed inclusion} yields the Rogers-Shephard inequality and Zhang inequality, respectively, for $s$-concave functions; see Section \ref{Reversed inequalities} for detailed statement.

 The optimizer in \eqref{main3-reversed inclusion} admits a simple geometric interpretation. Let $\triangle$ be a simplex in $\rn$ with vertices $v_0,\ldots, v_n$, and identify $\rn$ with the hyperplane $\rn\times\{0\}\subset \tr^{n+1}$. For 
$f^s(x)=a(1-\|x-x_0\|_{\triangle-x_0})_+ \quad \text{with } x_0\in \triangle$,  
its hypograph is given by the set
\[
\{(x,l)\in \tr^{n+1}: x\in \supp(f^s),~0\le l \le f^s(x)\}
\]
and coincides exactly with the simplex in $\tr^{n+1}$ having vertices $v_0,\ldots, v_n, (x_0,a)$. The equality characterization of \eqref{main3-reversed inclusion} relies on the equality case of the Borell–Brascamp–Lieb inequality. Its highly rigid structure accounts for the difficulty in the study of the equality cases.

\section{Preliminaries}

In this section, we collect basic notation and results on function spaces and Schwarz symmetrals, together with some basic geometric facts. Useful general references include the books by Gardner \cite{Gar}, Schneider \cite{Sch}, and Schneider and Weil \cite{SW}.

\subsection{\texorpdfstring{$L^p$}{} spaces}\hspace{\fill}

Let $p>-1$ and $(X,\mathcal{B},\mu)$ be a measure space. In this work, we primarily focus  on the cases $X=\rn$ or $X\subset\rn$ with $\mu$ being the $n$-dimensional Lebesgue measure. 

For $p\neq 0$ and a measurable function $f:X\rightarrow \tr$, let
\[\|f\|_p=\Big(\int_{X}|f(x)|^p d\mu(x)\Big)^{1/p},\]
and
\[\|f\|_{\infty}=\inf\{C>0: |f(x)|\leq C~~\text{almost everywhere on}~X\}.\]
In particular, for $p\in(-1,0)$, we set $\|f\|_p = \infty$ if $\mu(\{x\in X: f(x)=0\})>0$.  The associated space $L^p(X)$ is then defined as
\[L^p(X)=\{f:X\rightarrow \tr : f~\text{is measurable},~\|f\|_p<\infty\},\]
where  $f=g$ in $L^p(X)$ means that they are equal  almost everywhere.

When $\mu(X)=1$, the $L^p$ norm is monotone in $p$: for $-1<p<q<0$ or $0<p<q<\infty$, 
\[\|f\|_p \le \|f\|_q,
\]
as a consequence of Jensen's inequality. In fact, this monotonicity also extends for $p<0<q$. To see this, under the assumption $\mu(X)=1$, we first note the limit
\[\lim_{p\rightarrow 0}\log\|f\|_p=\int_X\log |f(x)|d\mu(x),\]
which follows from the continuity of $\log\|f\|_p$ with respect to $p$. Applying Jensen's inequality then gives
\[\|f\|_p\leq \exp\Big(\int_X\log|f(x)|d\mu(x)\Big)\leq \|f\|_q,\]
for $p<0<q$.

\subsection{Symmetrization}\hspace{\fill}

For a Borel set $E\subset\rn$ with finite measure, the Schwarz symmetral of $E$, denoted by $E^{\star}$, is the  closed, centered ball having the same volume as $E$.  Let $f:\rn\rightarrow [0,\infty)$ be a measurable function. For 
$t>0$, the superlevel set of $f$ is defined by $\{f\ge t\}=\{x\in \rn: f(x)\ge t\}$. We say $f$ is non-zero if $\{f\neq 0\}$ has positive measure. 

If the superlevel sets of $f$ have finite measure, the layer cake formula gives 
  \[f(x)=\int_0^{\infty}\chi_{\{f\ge t\}}(x)dt\]
for almost every $x\in\rn$.  The Schwarz symmetral of $f$, denoted by $f^{\star}$, is defined as
\[f^{\star}(x)=\int_0^{\infty}\chi_{\{f\ge t\}^{\star}}(x)dt\]
for $x\in\rn$, so that $f^\star$ is radially symmetric and the superlevel sets satisfy  
$$|\{f^\star \ge t\}| = |\{f \ge t\}|$$ 
for all $t>0$.

%The Schwarz symmetral $f^{\star}$ is also called the symmetric decreasing rearrangement of $f$. If $|x|<|y|$ implies $f^{\star}(x)>f^{\star}(y)$, we say $f^{\star}$ is strictly symmetric decreasing.

A central tool in our proofs is the Riesz rearrangement inequality (see, for example  \cite[Theorem 3.7]{LL}) together with the characterization of equality by Burchard  \cite{Bu}.

\begin{thm}[Riesz's rearrangement inequality]\label{rri}
    For measurable $f,g,k:\rn\rightarrow [0, \infty)$ whose superlevel sets have finite measure.
    \[\di f(x)k(x-y)g(y)dxdy\leq \di f^{\star}(x)k^{\star}(x-y)g^{\star}(y)dxdy.\]
\end{thm}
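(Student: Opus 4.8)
The plan is to follow the standard two-step program: first reduce to the case where $f,g,k$ are characteristic functions of sets of finite measure via the layer-cake decomposition, then prove the inequality for three characteristic functions. By the layer-cake formula, write $f=\int_0^\infty \chi_{\{f\ge s\}}\,ds$ and similarly for $g$ and $k$; substituting into the triple integral and applying Fubini (everything is non-negative, so this is unconditional) reduces the claim to
\[
\di \chi_A(x)\chi_C(x-y)\chi_B(y)\,dx\,dy\le \di \chi_{A^\star}(x)\chi_{C^\star}(x-y)\chi_{B^\star}(y)\,dx\,dy
\]
for arbitrary Borel sets $A,B,C$ of finite measure, using that $\{f\ge s\}^\star=\{f^\star\ge s\}$ (and likewise for $g,k$). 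So the heart of the matter is the set version.

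For the set version, the cleanest route is Brascamp--Lieb--Luttinger-style spherical rearrangement, or alternatively a direct argument via Steiner symmetrization. I would use the Brunn--Minkowski-type / Riesz approach: prove the one-dimensional case by hand, then bootstrap to $\rn$. In dimension one, the three-fold integral $\int\int \chi_A(x)\chi_C(x-y)\chi_B(y)\,dx\,dy$ equals $\int \chi_A * \check\chi_C$ paired with $\chi_B$ type quantity, and the inequality for intervals centered at the origin can be verified directly; the general one-dimensional case follows because rearrangement onto centered intervals only increases the functional — this is the classical one-dimensional Riesz inequality, which one proves by a sliding/compression argument (move mass of $A$, $B$, $C$ toward the origin and check monotonicity of the overlap). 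To pass from $\mathbb{R}$ to $\rn$, apply Steiner symmetrization coordinate by coordinate: a single Steiner symmetrization in direction $e_i$ does not decrease the functional (this reduces to the one-dimensional inequality applied on almost every line parallel to $e_i$, combined with the fact that Steiner symmetrization of $A,B,C$ restricts to one-dimensional symmetrization on each such line, together with the translation structure $\chi_C(x-y)$ which is compatible with symmetrization in each direction), and iterating Steiner symmetrizations in a suitable sequence of directions converges (in $L^1$, by a standard compactness argument for sets of finite measure) to the ball, i.e.\ to the Schwarz symmetral. Passing to the limit in the functional then gives the result. Since the paper only needs the inequality (not its proof), I would in fact \emph{cite} \cite[Theorem 3.7]{LL} as the excerpt already does, and spend effort only where the paper later needs the equality characterization, quoting Burchard \cite{Bu}.

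\textbf{Main obstacle.} The genuinely delicate point is the convergence of iterated Steiner symmetrizations to the Schwarz symmetral and the continuity of the trilinear functional along that sequence: one must choose the directions so that the symmetrized sets converge (in measure) to balls, control the support so the functional stays finite, and justify passing the limit inside the double integral (dominated convergence using that all sets are uniformly contained in a fixed large ball after finitely many symmetrizations, or a uniform integrability argument). The one-dimensional base case, while elementary, also requires care in the compression argument to handle sets that are not intervals. However, for the purposes of this paper none of this needs to be reproved — the statement is invoked as a known theorem — so the ``proof'' here is simply the citation to \cite{LL}, with Burchard's theorem \cite{Bu} supplying the equality cases used in later sections.
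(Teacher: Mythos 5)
Your proposal matches the paper's treatment: the paper states Theorem \ref{rri} as a known result and simply cites \cite[Theorem 3.7]{LL}, with Burchard \cite{Bu} invoked separately for the equality cases, exactly as you conclude. Your sketched layer-cake reduction plus one-dimensional Riesz inequality and Steiner symmetrization is the standard argument and is a reasonable outline, but no proof is given or needed in the paper itself.
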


\begin{thm}[Burchard]\label{rri-cha}
    Let $A, B$ and $C$ be sets of finite positive measure in $\rn$ and denote by $a, b$ and $c$ the radii of their Schwarz symmetrals $A^{\star}, B^{\star}$ and $C^{\star}$. For $|a-b|<c<a+b $, there is equality in
    \[\di\chi_A(x)\chi_B(x-y)\chi_C(y)dxdy\leq \di\chi_{A^{\star}}(x)\chi_{B^{\star}}(x-y)\chi_{C^{\star}}(y)dxdy\]
    if and only if, up to sets of measure zero,
    \[A=x_0+a D,~B=x_1+b D,~C=x_2+c D,\]
    where $D$ is a centered ellipsoid, and $x_0,x_1$ and $x_2=x_0+x_1$ are vectors in $\rn$.
\end{thm}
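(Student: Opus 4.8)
The plan is to follow the Steiner‑symmetrization route, which is in essence Burchard's argument \cite{Bu}. After a routine approximation — exhaust $A$, $B$, $C$ from within by bounded open sets, noting that the equality hypothesis passes to the limit in conjunction with Theorem~\ref{rri} — one may assume $A$, $B$, $C$ bounded. The decisive first step upgrades the equality hypothesis. Write $I(D_1,D_2,D_3)=\iint\chi_{D_1}(x)\chi_{D_2}(x-y)\chi_{D_3}(y)\,dx\,dy$ and let $S_u$ denote Steiner symmetrization about the hyperplane $u^{\perp}$. On the one hand $I(S_uA,S_uB,S_uC)\ge I(A,B,C)$ for every $u\in\sn$ — a standard fact and the engine of one proof of Theorem~\ref{rri}. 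On the other hand $S_u$ preserves volume, so $(S_uA)^{\star}=A^{\star}$, $(S_uB)^{\star}=B^{\star}$, $(S_uC)^{\star}=C^{\star}$, and Theorem~\ref{rri} applied to the symmetrized triple gives $I(S_uA,S_uB,S_uC)\le I(A^{\star},B^{\star},C^{\star})$. Hence $I(A,B,C)=I(A^{\star},B^{\star},C^{\star})$ forces $I(S_uA,S_uB,S_uC)=I(A,B,C)$ for \emph{every} direction $u$.

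The next step is to analyze the equality case of a single Steiner symmetrization, say in the direction $e_n$. Writing $x=(x',x_n)$ and $A_{x'}=\{t\in\mathbb R:(x',t)\in A\}$, Fubini gives $I(A,B,C)=\iint_{\mathbb R^{n-1}\times\mathbb R^{n-1}}J(x',y')\,dx'\,dy'$, where $J(x',y')$ is the one‑dimensional Riesz functional of the fibres $A_{x'}$, $B_{x'-y'}$, $C_{y'}$; symmetrization replaces each fibre by the centred interval of the same length, and $I(S_{e_n}A,S_{e_n}B,S_{e_n}C)$ admits the same representation with $J$ replaced by the larger symmetrized value $J^{\star}$. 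Equality therefore forces $J(x',y')=J^{\star}(x',y')$ for a.e.\ $(x',y')$. The one‑dimensional instance of the theorem (which is elementary) then applies fibrewise: whenever the three fibre‑lengths satisfy the strict triangle inequality, $J=J^{\star}$ forces $A_{x'}$, $B_{x'-y'}$, $C_{y'}$ to be intervals whose centres $\gamma_A(x')$, $\gamma_B(x'-y')$, $\gamma_C(y')$ satisfy $\gamma_A(x')=\gamma_B(x'-y')+\gamma_C(y')$. Propagating this to a full‑measure set of transverse fibres — and in particular using the global non‑degeneracy hypothesis $|a-b|<c<a+b$ on the radii of $A^{\star},B^{\star},C^{\star}$ to discard the degenerate fibres on which one length dominates and one‑dimensional equality is not rigid — is the technical heart of the proof and, I expect, the main obstacle.

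Granting this, the rest is structural. The functional equation $\gamma_A(x')=\gamma_B(x'-y')+\gamma_C(y')$, valid on a convex open domain, is of Cauchy type and forces $\gamma_A,\gamma_B,\gamma_C$ to be affine with a common linear part $L^{u}$ and constant terms related by $c_A^{u}=c_B^{u}+c_C^{u}$; equivalently, for every direction $u$ the midpoints of the chords of $A$ (resp.\ $B$, $C$) parallel to $u$ fill an affine hyperplane, the three hyperplanes are parallel, and their offsets add. Since a bounded measurable set whose sections in a.e.\ direction are a.e.\ intervals agrees, up to a null set, with a full‑dimensional convex body, $A$, $B$, $C$ coincide a.e.\ with convex bodies; and a convex body whose chord‑midpoint locus in every direction is a hyperplane is an ellipsoid — a classical fact (cf.\ \cite{Bu}). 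Equality of the linear parts $L^{u}$ for all $u$ makes the three ellipsoids homothetic, matching of volumes identifies the dilation factors with the radii $a$, $b$, $c$, so $A=x_0+aD$, $B=x_1+bD$, $C=x_2+cD$ with $D$ a centred ellipsoid, and the additivity $c_A^{u}=c_B^{u}+c_C^{u}$ over all $u$ produces the linear relation among $x_0,x_1,x_2$ in the statement.

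For the converse, the property ``$I(A,B,C)=I(A^{\star},B^{\star},C^{\star})$'' is invariant under every invertible linear map (both sides scale by $(\det T)^2$) and, in conjunction with the linear relation among the centres, a joint translation centres all three bodies; applying an affine map that takes $D$ to a ball then reduces a configuration as in the statement to three concentric balls, for which equality is trivial. The ingredients to be invoked rather than reproved are thus the classical ellipsoid characterization and the convexity‑from‑a.e.‑convex‑sections lemma, while the substantive effort is concentrated in the one‑dimensional rigidity together with the fibrewise propagation of the second paragraph.
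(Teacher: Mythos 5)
You should first note that the paper contains no proof of this statement: Theorem~\ref{rri-cha} is imported verbatim from Burchard's paper \cite{Bu}, so the only thing to compare your sketch against is Burchard's original argument, and what you have written is essentially an outline of that argument rather than a proof. Two gaps are genuine. First, the opening reduction is not ``routine approximation'': from $I(A,B,C)=I(A^{\star},B^{\star},C^{\star})$ you cannot conclude exact equality for truncations $A_k=A\cap kB^n$, etc. Monotonicity only gives $I(A_k,B_k,C_k)\le I(A_k^{\star},B_k^{\star},C_k^{\star})\le I(A^{\star},B^{\star},C^{\star})$ with the left-hand side converging to the common limit, i.e.\ asymptotic, not exact, equality for each $k$; so ``one may assume $A,B,C$ bounded'' needs a different justification (in \cite{Bu} essential boundedness is in effect part of what is proved, not an a priori reduction, and inner approximation by \emph{open} sets is not even available for general measurable sets). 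Second, and decisively, the step you yourself call ``the technical heart'' --- the one-dimensional rigidity on fibres (itself not quite elementary: it is the equality case of the one-dimensional Riesz--Sobolev inequality, a Brunn--Minkowski-type rigidity) and above all its propagation through the degenerate fibres where the fibre-lengths violate the strict triangle inequality, which is exactly where the hypothesis $|a-b|<c<a+b$ enters --- is named but not carried out. That propagation is the bulk of Burchard's Annals paper; without it, the later structural steps (the Pexider-type equation for the chord-midpoint maps, which moreover holds only a.e.\ on a subdomain and needs a measurability argument to yield affinity, and Brunn's characterization of ellipsoids by hyperplanar midpoint loci) have nothing to act on. So the proposal is a correct roadmap of the known proof, not a proof.

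One useful by-product of your computation deserves flagging: for the integrand as written in the paper, $\di\chi_A(x)\chi_B(x-y)\chi_C(y)\,dx\,dy=\int_{\rn}\chi_A\,(\chi_B*\chi_C)$, and the additivity your argument produces is $x_0=x_1+x_2$ (the centre of $A$ equals the sum of the centres of $B$ and $C$), which is the correct relation here; the relation $x_2=x_0+x_1$ stated in Theorem~\ref{rri-cha} matches Burchard's ordering of the integrand (where the third set sits at the sum variable) and appears to be a transcription slip, as does the use of $\alpha,\beta,\gamma$ for radii that are named $a,b,c$. This is harmless for the paper's applications in Lemma~\ref{rri-min} and Lemma~\ref{q>n}, where $A=C$ and $B$ is centred, but it should be corrected in the statement.
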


\subsection{Star-shaped sets and dual mixed volumes}\label{dual mixed}\hspace{\fill}

A set $K \subset \mathbb{R}^n$ is called  star-shaped  (with respect to the origin) if for every $x\in K$, the line segment $[o,x]$ is contained in $K$. For a star-shaped set $K$ containing the origin in its interior, the gauge function $\|\cdot\|_K:\rn\rightarrow [0,\infty]$ is defined by
\begin{equation}\label{gauge}
    \|x\|_K=\inf\{\lambda >0: x\in \lambda K\}.
\end{equation}
If $K$ contains the origin, possibly on its boundary, then for any point $x$, the gauge function $\|x\|_K$ is defined by \eqref{gauge} if there exists $r>0$ such that $rx\in K$; otherwise, $\|x\|_K=\infty$.

The corresponding radial function $\rho_K:\rn\backslash\{0\}\rightarrow [0,\infty]$ is
\[\rho_{K}(x)=\|x\|_K^{-1}=\sup\{r\ge 0: rx\in K\}.\]
In particular, if $K$ is the unit Euclidean ball, $\|\cdot\|_K$ coincides with the Euclidean norm $|\cdot|$. 
%A star-shaped set $L$ is said to be a star body if $\rho_{L}$ is continuous and strictly positive on $\rn\backslash\{0\}$. 

For a star-shaped set $K$ with measurable radial function, the volume, or the $n$-dimensional Lebesgue measure of $K$ is given by
\[|K|=\frac{1}{n}\int_{\sn}\rho_{K}(u)^ndu,\]
where $du$ denotes the spherical Lebesgue measure. Following Lutwak \cite{Lut1}, the dual mixed volume for star-shaped sets $K$ and $L$ is defined by
\[\tilde{V}_{\alpha}(K,L)=\frac{1}{n}\int_{\sn}\rho_{K}(u)^{n-\alpha}\rho_L(u)^{\alpha}du,\]
for $\alpha\in\tr\backslash\{0,n\}$. In particular, 
\[\tilde{V}_{\log}(K,L)=\lim_{\alpha\rightarrow 0}\log\Big(\frac{\tilde{V}_{\alpha}(K,L)}{|K|}\Big)^{1/\alpha}=\frac{1}{n|K|}\int_{\sn}\rho_K(u)^n\log\Big(\frac{\rho_L(u)}{\rho_K(u)}\Big)du.\]

If  $\alpha\in(0,n)$ and  $K,L$ are star-shaped sets with finite volume, the dual mixed volume inequality states that
\begin{equation}\label{dmvi1}
    \tilde{V}_{\alpha}(K,L)\leq |K|^{\frac{n-\alpha}{n}}|L|^{\frac{\alpha}{n}},
\end{equation}
where equality holds if and only if $K$ and $L$ are dilates, i.e., $\rho_K(u)=c\rho_L(u)$ for some $c\ge0$ and almost all $u\in\sn$. 
For $\alpha<0$ or $\alpha>n$, the dual mixed volume inequality states that
\begin{equation}\label{dmvi2}
    \tilde{V}_{\alpha}(K,L)\geq |K|^{\frac{n-\alpha}{n}}|L|^{\frac{\alpha}{n}},
\end{equation}
where equality holds if and only if $K$ and $L$ are dilates.

\subsection{Log-concave and \texorpdfstring{$s$}{}-concave functions}\label{log s}\hspace{\fill}

A function $f:\rn\rightarrow[0,\infty)$ is  log-concave if $\log f$ is a concave function on $\rn$, with values in $[-\infty,\infty)$. For $s>0$, a function $f:\rn\rightarrow[0,\infty)$ is $s$-concave if $f^s$ is concave on its support.

\begin{lem}\cite[Lemma 2.1]{ABG}\label{abg2.1}
    Let $f:\rn\rightarrow[0,\infty)$ be a log-concave and integrable function. Then the function $g:\rn\rightarrow [0,\infty)$ defined by
    \[g(x)=\int_{\rn}\min\{f(y), f(y+x)\}dy\]
    is even, log-concave, with $\|g\|_{\infty}=g(o)=\|f\|_1$ and $\supp(g)$ containing the origin point in its interior.
\end{lem}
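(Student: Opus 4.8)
The plan is to establish the four assertions in turn — evenness, the identity $\|g\|_\infty=g(o)=\|f\|_1$, log-concavity of $g$, and $o\in\interior\supp(g)$ — the first two being essentially formal and the third carrying all the weight. For evenness, in $g(-x)=\int_{\rn}\min\{f(y),f(y-x)\}\,dy$ I substitute $z=y-x$ to obtain $\int_{\rn}\min\{f(z+x),f(z)\}\,dz=g(x)$. For the sup-norm identity, the pointwise bound $\min\{f(y),f(y+x)\}\le f(y)$ together with $f\ge 0$ gives $g(x)\le\int_{\rn}f=\|f\|_1$ for every $x$, while $g(o)=\int_{\rn}\min\{f(y),f(y)\}\,dy=\|f\|_1$; the same domination by $f\in L^1(\rn)$ shows $g(x)$ is finite for all $x$ and $g\in L^1(\rn)$.

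For log-concavity I would introduce $h\colon\rn\times\rn\to[0,\infty)$, $h(x,y)=\min\{f(y),f(x+y)\}$, and argue that $h$ is log-concave on $\rn\times\rn$: the maps $(x,y)\mapsto f(y)$ and $(x,y)\mapsto f(x+y)$ are pullbacks of the log-concave function $f$ under the linear maps $(x,y)\mapsto y$ and $(x,y)\mapsto x+y$, hence log-concave on $\rn\times\rn$, and the pointwise minimum of two log-concave functions is log-concave (its superlevel sets are intersections of convex sets). Since $g(x)=\int_{\rn}h(x,y)\,dy$ is the marginal of the log-concave, integrable function $h$, log-concavity of $g$ follows from Prékopa's marginal theorem; concretely, for $x_0,x_1,y_0,y_1\in\rn$ one has $h\big(\tfrac{x_0+x_1}{2},\tfrac{y_0+y_1}{2}\big)\ge h(x_0,y_0)^{1/2}h(x_1,y_1)^{1/2}$, and integrating this inequality in the $y$-variables through the Prékopa–Leindler inequality yields $g\big(\tfrac{x_0+x_1}{2}\big)\ge g(x_0)^{1/2}g(x_1)^{1/2}$.

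For the last assertion, set $C=\{f>0\}$. Because $f$ is log-concave, $C$ is convex, and because $f$ is non-zero and integrable, $C$ has positive Lebesgue measure; a convex set of positive measure has non-empty interior, so $C\supset B(y_0,\varepsilon)$ for some $y_0\in\rn$ and $\varepsilon>0$. For any $x$ with $|x|<2\varepsilon$, the ball $B(y_0,\varepsilon)$ and the ball $B(y_0-x,\varepsilon)\subset C-x$ overlap in a set of positive measure, and on that set $f(y)>0$ and $f(y+x)>0$, whence $\min\{f(y),f(y+x)\}>0$; integrating gives $g(x)>0$. Thus $B(o,2\varepsilon)\subset\{g>0\}\subset\supp(g)$, so $o\in\interior\supp(g)$.

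The only genuinely nontrivial ingredient is the log-concavity step, and the main obstacle there is the clean application of Prékopa–Leindler: one must check that $h$ is measurable and integrable in $y$ for a.e.\ $x$ (supplied by the bound $h\le f$) and dispose of the degenerate case $g(x_0)g(x_1)=0$, where the claimed inequality is trivial. Everything else is routine bookkeeping.
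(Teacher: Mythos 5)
The paper does not prove this lemma; it is quoted verbatim from Alonso-Guti\'errez, Bernu\'es and Gonz\'alez Merino (\cite[Lemma 2.1]{ABG}), so there is no in-text argument to compare against. Your proof is correct and follows the standard route one would expect (and that \cite{ABG} uses): evenness by the change of variables $z=y-x$; the norm identity from the pointwise bound $\min\{f(y),f(y+x)\}\le f(y)$; log-concavity by observing that $(x,y)\mapsto\min\{f(y),f(x+y)\}$ is log-concave on $\rn\times\rn$ (minimum of pullbacks of $f$ under linear maps) and applying Pr\'ekopa--Leindler to the marginal in $y$; and the origin in the interior of $\supp(g)$ from the fact that $\{f>0\}$ is a convex set of positive measure, hence contains a ball whose translates by small $x$ still overlap it in positive measure.
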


For $\lambda\in[0,1]$, $a,b>0$ and $s>0$, we denote by
\begin{equation*}
            {\mathcal M}_{\lambda}^{s}(a,b)=\left\{
                \begin{aligned}
                &((1-\lambda)a^s+\lambda b^s)^{1/s}&&\text{for}~~ab\neq 0,\\
                &0&&\text{for}~~ab=0.\\
                \end{aligned}\right.
        \end{equation*}
For an $s$-concave function $f$, we have $f((1-\lambda)x+\lambda y)\ge \mathcal{M}_{\lambda}^s(f(x), f(y))$ for $\lambda\in[0,1]$. We say $f$ is $(\lambda_0, s)$-concave, if $f((1-\lambda_0)x+\lambda_0 y)\ge \mathcal{M}_{\lambda_0}^s(f(x), f(y))$ for fixed $\lambda_0\in(0,1)$.

The following theorem of Balogh and Krist\'aly \cite{BK} gives the equality characterization of the Borell-Brascamp-Lieb inequality.

\begin{thm}\cite[Theorem 3.1]{BK}\label{BBL-eq}
    Let $\lambda\in(0,1)$ and $s>-\frac{1}{n}$. Let $F,G,H:\rn\rightarrow [0,\infty)$ be  non-zero, compactly supported integrable functions satisfying
    \[H((1-\lambda)x+\lambda y)\ge {\mathcal M}_{\lambda}^s(F(x),G(y)).\]
    Then there is equality in
    \begin{equation}\label{BBL}
    \int_{\rn} H \ge {\mathcal M}_{\lambda}^{\frac{s}{ns+1}}\Big(\int_{\rn} F, \int_{\rn} G\Big),
    \end{equation}
    if and only if there is $x_0\in\rn$, an $(\lambda_0,s)$-concave function $\Phi: \supp(F)\rightarrow [0,\infty)$ with $\lambda_0=\frac{\lambda c_0}{1-\lambda+\lambda c_0}$ and $c_0=(\frac{|\supp (G)|}{|\supp(F)|})^{1/n}$ such that up to null measure sets,
    \[\supp (G)=c_0\,\supp(F)+x_0,~~\text{and}~~\supp(H)=(1-\lambda+\lambda c_0)\,\supp (F)+\lambda x_0,\]
    and for almost all $x\in\supp(F)$,
    \begin{equation*}
        \left\{
                \begin{aligned}
                &F(x)=\Phi(x)&&,\\
                &G(c_0x+x_0)=c_0^{\frac{1}{s}}\Phi(x)&&,\\
                &H((1-\lambda+\lambda c_0)x+\lambda x_0)=\Big[\mathcal{M}_{\lambda}^{\frac{s}{ns+1 }}\Big(1, c_0^{\frac{ns+1}{s}}\Big)\Big]^{\frac{1}{ns+1}}\Phi(x)&&.
                \end{aligned}\right.
    \end{equation*}
\end{thm}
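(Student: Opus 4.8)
The plan is to run the optimal-transport proof of the Borell--Brascamp--Lieb inequality (rather than the alternative route through the superlevel-set representation and the equality case of Brunn--Minkowski) and then to extract the equality cases by inspecting each step in turn. Since $f$ and $g$ are non-zero and integrable, $\theta_f:=\int_{\rn}f$ and $\theta_g:=\int_{\rn}g$ lie in $(0,\infty)$, and so does $\int_{\rn}h$ once the inequality is known; hence I may assume all three are positive and finite. Put $\mu:=\theta_f^{-1}f\,dx$ and $\nu:=\theta_g^{-1}g\,dx$, absolutely continuous probability measures, and let $T=\nabla\varphi$ be the Brenier map with $T_{\#}\mu=\nu$, $\varphi$ convex. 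For $\lambda\in(0,1)$ set $T_\lambda:=(1-\lambda)\,\mathrm{id}+\lambda T=\nabla\varphi_\lambda$ with $\varphi_\lambda(x)=\tfrac{1-\lambda}{2}|x|^2+\lambda\varphi(x)$; since $\varphi_\lambda$ is strictly convex, $T_\lambda$ is injective wherever defined. I will use the standard regularity theory: $\varphi$ is twice differentiable $\mu$-a.e., $DT=D^2\varphi$ is symmetric positive semidefinite, the area formula holds for $T_\lambda$, and the Monge--Amp\`ere identity $\theta_f^{-1}f(x)=\theta_g^{-1}g(T(x))\,\det DT(x)$ holds $\mu$-a.e.; in particular $\det DT(x)>0$ $\mu$-a.e.

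First I would prove the inequality. By the area formula for $\varphi_\lambda$ and then the hypothesis on $h$,
\[
\int_{\rn}h\ \ge\ \int_{\supp f}h\big(T_\lambda(x)\big)\det DT_\lambda(x)\,dx\ \ge\ \int_{\supp f}\mathcal{M}^s_\lambda\big(f(x),g(T(x))\big)\det DT_\lambda(x)\,dx.
\]
As $DT_\lambda(x)=(1-\lambda)I+\lambda\,DT(x)$, Minkowski's determinant inequality gives $\big(\det DT_\lambda(x)\big)^{1/n}\ge(1-\lambda)+\lambda\big(\det DT(x)\big)^{1/n}$, with equality (using $\det DT(x)>0$) precisely when $DT(x)$ is a scalar multiple of $I$. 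Replacing $\det DT(x)$ by the Monge--Amp\`ere expression and passing to an integral against the probability measure $\mu$ via $dx=\theta_f f(x)^{-1}\,d\mu(x)$, the integrand becomes a function of the single scalar $\beta(x):=\big(\theta_f\,g(T(x))/(\theta_g f(x))\big)^{1/n}$ only. I would then prove the sharp one-variable inequality that this function is $\ge\mathcal{M}^{s/(ns+1)}_\lambda(\theta_f,\theta_g)$, with equality exactly when $\beta(x)=\beta_*:=(\theta_f/\theta_g)^{s/(ns+1)}$, the unique minimizer; the value $s/(ns+1)$ and the normalization $\beta_*$ come directly from the critical-point equation, and $s>-1/n$ is exactly what keeps the exponent finite. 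Integrating over $\mu$ yields $\int_{\rn}h\ge\mathcal{M}^{s/(ns+1)}_\lambda(\theta_f,\theta_g)$. The computation is carried out for $s\neq0$; the case $s=0$ is Pr\'ekopa--Leindler and follows by the same scheme with geometric means in place of $\mathcal{M}^s_\lambda$.

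For the ``only if'' direction, assume equality. Then every estimate above is an equality $\mu$-a.e.: equality in Minkowski's inequality forces $DT(x)=c(x)I$ $\mu$-a.e., and equality in the one-variable inequality forces $\beta(x)\equiv\beta_*$; since Monge--Amp\`ere gives $\beta(x)=\big(\det DT(x)\big)^{-1/n}=c(x)^{-1}$, we obtain $c(x)\equiv c_0:=\beta_*^{-1}=(\theta_g/\theta_f)^{s/(ns+1)}$. A rigidity argument---combining $DT\equiv c_0I$ $\mu$-a.e.\ with the equality conditions in the area-formula step, which constrain $\supp h$ and rule out a singular part in the Hessian of $\varphi$---then shows $T(x)=c_0x+x_0$ $\mu$-a.e.\ for some $x_0\in\rn$. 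Consequently $\supp g=c_0\supp f+x_0$ and $\supp h=T_\lambda(\supp f)=\big((1-\lambda)+\lambda c_0\big)\supp f+\lambda x_0$ up to null sets, so $c_0=(|\supp g|/|\supp f|)^{1/n}$ and $\theta_g/\theta_f=c_0^{(ns+1)/s}$, and Monge--Amp\`ere becomes $g(c_0x+x_0)=c_0^{1/s}f(x)$. The hypothesis-equality then gives, for a.e.\ $x\in\supp f$,
\[
h\big(((1-\lambda)+\lambda c_0)x+\lambda x_0\big)=\mathcal{M}^s_\lambda\big(f(x),c_0^{1/s}f(x)\big)=\big((1-\lambda)+\lambda c_0\big)^{1/s}f(x).
\]
With $\Phi:=f|_{\supp f}$ and the identity $\big((1-\lambda)+\lambda c_0\big)^{1/s}=\mathcal{M}^{s/(ns+1)}_\lambda\big(1,c_0^{(ns+1)/s}\big)^{1/(ns+1)}$, these are precisely the three formulas in \eqref{BBL}. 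Finally, re-inserting these forms of $f,g,h$ into $h((1-\lambda)x+\lambda y)\ge\mathcal{M}^s_\lambda(f(x),g(y))$, writing $y=c_0z+x_0$ with $z\in\supp f$ and $(1-\lambda)x+\lambda c_0z=\big((1-\lambda)+\lambda c_0\big)\big((1-\lambda_0)x+\lambda_0z\big)$ with $\lambda_0=\lambda c_0/((1-\lambda)+\lambda c_0)$, the hypothesis collapses to $\Phi((1-\lambda_0)x+\lambda_0z)\ge\mathcal{M}^s_{\lambda_0}(\Phi(x),\Phi(z))$, that is, $\Phi$ is $(\lambda_0,s)$-concave. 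For the ``if'' direction one checks, under the stated structure, that $(\lambda_0,s)$-concavity of $\Phi$ makes the hypothesis hold and that a short computation of $\theta_f,\theta_g,\int h$ returns $\int h=\mathcal{M}^{s/(ns+1)}_\lambda(\theta_f,\theta_g)$.

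I expect the main obstacle to be twofold. The measure-theoretic backbone of the transport argument must be handled carefully: the Monge--Amp\`ere identity for merely integrable densities, the area-formula step being an equality precisely when $\supp h\subseteq T_\lambda(\supp f)$ up to null sets (needing second-order differentiability of Kantorovich potentials, the Lusin $(N)$ property of gradients of convex functions, and injectivity of $T_\lambda$ from strict convexity of $\varphi_\lambda$), and especially the \emph{rigidity} step upgrading ``$DT\equiv c_0I$ $\mu$-a.e.'' to ``$T$ affine $\mu$-a.e.''---this last point is genuinely delicate, since an affine Hessian $\mu$-a.e.\ does not by itself force $\varphi$ to be quadratic, and one must invoke the remaining equality conditions to exclude that pathology. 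The second obstacle is the sharp one-variable inequality together with uniqueness of its minimizer: its \emph{strict} form is exactly what forces $\beta(x)$ to be constant and hence produces the rigid affine structure, and establishing it cleanly and uniformly for all $s>-1/n$ is a careful calculus lemma.
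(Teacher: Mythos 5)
This statement is not proved in the paper at all: it is quoted verbatim from Balogh and Krist\'aly \cite{BK} (Theorem 3.1 there) and used as a black box in Section 8, so there is no in-paper proof to compare against. Your optimal-transport plan (Brenier map, Monge--Amp\`ere identity, Minkowski's determinant inequality, a sharp one-variable lemma, then equality analysis) is essentially the standard route to Borell--Brascamp--Lieb and its equality cases, i.e.\ the same circle of ideas as the cited source (which goes back to Dubuc in the Euclidean setting), not a genuinely different argument. Your bookkeeping is consistent with the statement: the constants $c_0=(\theta_g/\theta_f)^{s/(ns+1)}=(|\supp(g)|/|\supp(f)|)^{1/n}$, the identity $((1-\lambda)+\lambda c_0)^{1/s}=\bigl[\mathcal{M}_\lambda^{s/(ns+1)}(1,c_0^{(ns+1)/s})\bigr]^{1/(ns+1)}$, and the derivation of $(\lambda_0,s)$-concavity of $\Phi$ by re-inserting the rigid form of $f,g,h$ into the hypothesis all check out (note the paper's ``$+\,sx_0$'' in the support of $h$ is evidently a typo for $\lambda x_0$, which is what you obtain).

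However, as a proof the proposal has a genuine gap exactly where the theorem's content lies: the ``only if'' direction. You reduce equality to (i) a strict one-variable inequality whose minimizer is unique and (ii) the rigidity statement that $DT\equiv c_0 I$ $\mu$-a.e.\ implies $T(x)=c_0x+x_0$ $\mu$-a.e., but you prove neither, and (ii) is not a routine patch. Since $DT$ here is only the Alexandrov (absolutely continuous part of the) Hessian of the Kantorovich potential, $D^2\varphi=c_0I$ a.e.\ does \emph{not} by itself force $\nabla\varphi$ to be affine (a singular part may survive, already in dimension one: $\varphi(x)=c_0x^2/2+|x|$); one must exploit the remaining equality conditions (mass balance in the Jacobian/area-formula step, or convexity of the supports) to exclude the singular part, and this is precisely the delicate analysis carried out in \cite{BK}. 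You name this obstacle honestly, but naming it does not discharge it; similarly, the case $s\in(-1/n,0)$ of the one-variable lemma (where $s/(ns+1)<0$ and the relevant mean inequalities reverse direction) needs to be written out, not just asserted ``uniformly for all $s>-1/n$''. As it stands the proposal is a sound plan that mirrors the known proof, but the two steps it defers are the heart of the equality characterization, so it cannot be accepted as a complete argument.
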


\subsection{Fractional polar projection bodies}\label{frac ppb}\hspace{\fill}
\vskip 7pt
For $\alpha\in(-1,0)$ and measurable $f:\rn\rightarrow\tr$, the fractional polar projection body $\Pi^{*,-\alpha}f$ is defined by
\begin{equation*}
    \rho_{\Pi^{*,-\alpha}f}(u)^{\alpha}=\int_0^{\infty}r^{\alpha-1}\int_{\rn}|f(x)-f(x+ru)|dxdr.
\end{equation*}
The affine fractional Sobolev inequalities \eqref{aff frac-Sob} state that
\begin{equation*}
    2\sigma_{n,\alpha}\|f\|_{\frac{n}{n+\alpha}}\leq n\omega_n^{\frac{n-\alpha}{n}}|\Pi^{*,-\alpha}f|^{\frac{\alpha}{n}}\leq \di \frac{|f(x)-f(y)|}{|x-y|^{n-\alpha}}dxdy
\end{equation*}
for $f\in W^{-\alpha,1}(\rn)$

Let $\mathcal{B}(\rn)$ denote the class of Borel sets in $\rn$. Let $C_c^{\infty}(\rn; \rn)$ denote the set of smooth vector fields with compact support. Moreover, ${\rm div}\, \nu$ denotes the divergence of $\nu\in C_c^{\infty }(\rn; \rn)$. We say that $f\in L^1(\rn)$ is a function of bounded variation on $\rn$ if there is a finite vector-valued Radon measure $\mathcal{D}f: \mathcal{B}(\rn)\rightarrow \rn$ such that
\[\int_{\rn}\nu(x)\cdot n_f(x)d|\mathcal{D}f|(x)=-\int_{\rn}f(x){\rm div}\,\nu(x)dx\]
for every $\nu\in C_c^{\infty}(\rn; \rn)$, where $|\mathcal{D}f|: \mathcal{B}(\rn)\rightarrow [0,\infty)$ denotes the variation measure of $\mathcal{D} f$ and $n_f: \rn\rightarrow \rn$ the Radon-Nikodym derivative of $\mathcal{D} f$ with respect to $|\mathcal{D}f|$.

For $f\in L^1(\rn)$ with bounded variation, the polar projection body $\Pi^*f$ is defined by
\[\|u\|_{\Pi^{*}f}=\frac{1}{2}\int_{\rn}|n_f(x)\cdot u|\,d|\mathcal{D}f|(x).\]
For more information about $\Pi^* f$, we refer the reader to \cite{Wa}.

\section{The star-shaped set \texorpdfstring{$\La f$}{}}\label{Laf}

Building on the approach of Haddad and Ludwig \cite{HL1,HL2, HL3}, we introduce the following anisotropic integral. Let $f:\rn\rightarrow[0,\infty)$ be a measurable function.  We consider
\[\di\frac{\min\{f(x), f(y)\}}{\|x-y\|_K^{n-\alpha}}dxdy,\]
where $K\subset\rn$ is a star-shaped set with measurable radial function and $\alpha>0$.

Here,  we first recall the definition of $\La f$: 
\[\rho_{\Laf}(u)^{\alpha}=\int_0^{\infty}r^{\alpha-1}\int_{\rn}\min\{f(x), f(x+ru)\}dxdr.\]
Using Fubini's theorem and polar coordinates, we obtain that
\begin{equation*}
    \begin{aligned}
    \di\frac{\min\{f(x), f(y)\}}{\|x-y\|_K^{n-\alpha}}dxdy&=\di\frac{\min\{f(y+z), f(y)\}}{\|z\|_K^{n-\alpha}}dzdy\\
    &=\int_{\rn}\int_{\sn}\int_0^{\infty}\rho_K(ru)^{n-\alpha}r^{n-1}\min\{f(y), f(y+ru)\}drdudy\\
    &=\int_{\sn}\rho_K(u)^{n-\alpha}\int_0^{\infty}r^{\alpha-1}\int_{\rn}\min\{f(y), f(y+ru)\}dydrdu\\
    &=\int_{\sn}\rho_K(u)^{n-\alpha}\rho_{\Laf}(u)^{\alpha}du.
    \end{aligned}
\end{equation*}
That is,
\begin{equation}\label{a-i}
    \di\frac{\min\{f(x), f(y)\}}{\|x-y\|_K^{n-\alpha}}dxdy=n\tilde{V}_{\alpha}(K,\Laf).
\end{equation}
In particular,
\begin{equation*}
    \begin{aligned}
   |{\rm L}_n f|&=\frac{1}{n}\int_{\sn}\rho_{{\rm L}_n f}(u)^ndu\\
          &=\frac{1}{n}\int_{\sn}\int_0^{\infty}\int_{\rn}r^{n-1}\min\{f(x), f(x+ru)\}dxdrdu\\ 
 &=\frac{1}{n}\int_{\rn}\int_{\rn}\min\{f(x), f(y)\}dxdy.\\
\end{aligned}
\end{equation*}

Let $f:\rn\rightarrow [0,\infty)$ be a measurable function. Then the following hold:
\vskip 6pt

{For $\alpha\in(0,n)$}, the dual mixed volume inequality \eqref{dmvi1} together with \eqref{a-i} implies
\[\sup\Big\{\di \frac{\min\{f(x), f(y)\}}{\|x-y\|_K^{n-\alpha}}dxdy: K\subset\rn ~\text{star-shaped}, |K|=\omega_n\Big\}=n\omega_n^{\frac{n-\alpha}{n}}|\Laf|^{\frac{\alpha}{n}}.\]
For $\alpha>n$, the dual mixed volume inequality \eqref{dmvi2} together with \eqref{a-i} yields
\[\inf\Big\{\di \frac{\min\{f(x), f(y)\}}{\|x-y\|_K^{n-\alpha}}dxdy: K\subset\rn ~\text{star-shaped}, |K|=\omega_n\Big\}=n\omega_n^{\frac{n-\alpha}{n}}|\Laf|^{\frac{\alpha}{n}}.\]
Moreover, in either regime a suitable dilate of $\Laf$ attains the supremum (respectively, infimum), provided $|\Laf|$ is finite.

We conclude this section with a result on the convexity of $\Laf$ when $f$ is log-concave.

\begin{proposition}
Let $\alpha>0$. If $f:\mathbb{R}^n\to [0,\infty)$ is a log-concave, integrable function with compact support, then $\Laf$ is a convex body.   
\end{proposition}
\begin{proof}
For $u\in\sn$, we define
\begin{align*}
    g(ru)=\int_{\mathbb{R}^n}\min\{f(x),f(x+ru)\}d x.
\end{align*}
Since $f$ is log-concave, Lemma \ref{abg2.1} implies that $g$ is also log-concave.

Note that
\begin{align*}
    \rho_{\Laf}(u)=\Big(\int_{0}^{\infty}r^{\alpha-1} g(ru)dr\Big)^{1/\alpha}.
\end{align*}
It follows from \cite[Corollary 4.2]{GZ} that ${\Laf}$ is a convex body.
\end{proof} 

\section{Affine chord Sobolev inequalities}\label{ACS}

In this section, we establish the affine chord Sobolev inequalities stated in Theorem 1 and Theorem 2. To this end, we first prove a crucial lemma based on the Riesz rearrangement inequality.

Recall that a set $A\subset \mathbb{R}^n$ is homothetic to $B\subset \mathbb{R}^n$ if there exist $x\in \mathbb{R}^n$ and $a>0$ such that $A = x + aB$. Moreover, $A$ is said to be \emph{equivalent} to $B$ if their symmetric difference $A\triangle B$ is a null set. The following lemma provides a general form of the Riesz rearrangement inequality; see also \cite{Cai, HL1, HL2, HL3}.

\begin{lem}\label{rri-min}
    Let $q>0$ and $K\subset\rn$ be a star-shaped set with measurable radial function and $|K|>0$. For a measurable function $f:\rn\rightarrow[0,\infty)$ such that
    \[\di\frac{\min\{f(x), f(y)\}}{\|x-y\|_K^q}dxdy<\infty,\]
    there is
    \[\di\frac{\min\{f(x), f(y)\}}{\|x-y\|_K^q}dxdy\leq \di\frac{\min\{f^{\star}(x), f^{\star}(y)\}}{\|x-y\|_{K^{\star}}^q}dxdy.\]
    Equality holds {if and only if} $K$ is {equivalent to} a centered ellipsoid $D$ and for almost all $t>0$, the level set $\{f\ge t\}$ has measure zero or is homothetic to $D$ up to null sets.
\end{lem}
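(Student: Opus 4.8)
The plan is to reduce the statement to the Riesz rearrangement inequality (Theorem 2.5) via the layer-cake decomposition of $\min\{f(x),f(y)\}$ and a second layer-cake decomposition of $\|x-y\|_K^{-q}$ in the kernel variable, and then to extract the equality conditions from Burchard's theorem (Theorem 2.6). First I would write
\[
\min\{f(x),f(y)\}=\int_0^\infty \chi_{\{f\ge r\}}(x)\chi_{\{f\ge r\}}(y)\,dr
\]
and
\[
\|x-y\|_K^{-q}=\int_0^\infty \chi_{\{\|\cdot\|_K<s^{-1/q}\}}(x-y)\,ds
=\int_0^\infty \chi_{s^{-1/q}K^{\circ}}(x-y)\,ds,
\]
where $s^{-1/q}K^{\circ}$ denotes the open dilate of the star-shaped set; note $(s^{-1/q}K^{\circ})^{\star}=s^{-1/q}(K^{\star})^{\circ}$ since Schwarz symmetrization commutes with dilation and preserves measure, and $(K)^{\star}=(K^{\circ})^{\star}$. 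Plugging both into the double integral, applying Fubini to write everything as a triple integral over $(r,s)\in(0,\infty)^2$ of a Riesz-type functional $\int\!\!\int \chi_{\{f\ge r\}}(x)\,\chi_{s^{-1/q}K^{\circ}}(x-y)\,\chi_{\{f\ge r\}}(y)\,dx\,dy$, and then applying Theorem 2.5 pointwise in $(r,s)$, gives the inequality after reassembling the layer-cake integrals on the symmetrized side (using $|\{f\ge r\}^{\star}|=|\{f\ge r\}|$ and $(f^{\star})$'s superlevel sets being the symmetrals of those of $f$). The finiteness hypothesis guarantees that Fubini and the reassembly are legitimate.

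For equality, observe that equality in the integrated inequality forces equality in Theorem 2.5 for almost every pair $(r,s)$. For fixed $s$ and almost every $r$ with $|\{f\ge r\}|>0$, Burchard's theorem (Theorem 2.6) applies with $A=C=\{f\ge r\}$ and $B=s^{-1/q}K^{\circ}$ — here the triangle-type constraint $|a-b|<c<a+b$ on the radii is automatically satisfied because $A=C$ forces $a=c$, so we need $0<b<2a$, which holds for all sufficiently large $s$ (equivalently small dilates of $K$) and hence on a set of $s$ of positive measure. Burchard then yields that, up to null sets, $\{f\ge r\}$ is a translate of a fixed centered ellipsoid $D_r$ and $s^{-1/q}K^{\circ}$ is a centered dilate of the same $D_r$; since the latter must hold for a positive-measure set of $s$ with the same $D_r$, we get $K$ equivalent to a centered ellipsoid $D$, and then $D_r$ is forced to be a dilate of $D$ for a.e.\ admissible $r$, i.e.\ $\{f\ge r\}$ is homothetic to $D$. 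The converse direction is the routine observation that if $K$ is a centered ellipsoid and every superlevel set of $f$ is a homothet of it, a volume-preserving linear map reduces both sides to the Euclidean case where $f=f^{\star}$ up to translation of level sets, giving equality.

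The main obstacle I anticipate is the equality analysis, specifically the bookkeeping needed to promote the "for a.e.\ $(r,s)$ pair" conclusion of Burchard into a single ellipsoid $D$ working simultaneously for $K$ and for almost all $r$: one must argue that the ellipsoid $D_r$ produced for one value of $r$ (via the $s$-slice) is independent of $r$, which I would do by noting that $K$'s shape is determined once, on any positive-measure set of $s$, and then every $\{f\ge r\}$ with positive measure must be homothetic to that same $K$-shape. A secondary technical point is making the layer-cake representation of $\|x-y\|_K^{-q}$ rigorous when $\rho_K$ is merely measurable and possibly infinite on a null set of directions — but since the identity $\int_0^\infty \chi_{\{t<\rho\}}\,dt$-type formula only requires measurability, and the null directions contribute nothing to the integrals, this causes no real difficulty.
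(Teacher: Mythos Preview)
Your proposal is correct and follows essentially the same approach as the paper: both use the double layer-cake decomposition of $\min\{f(x),f(y)\}$ and of $\|x-y\|_K^{-q}$, apply the Riesz rearrangement inequality pointwise in the two auxiliary variables, and deduce the equality characterization from Burchard's theorem by choosing the dilation parameter so that the strict triangle condition $0<b<2a$ is met. Your handling of the equality bookkeeping---first pinning down the shape of $K$ from a positive-measure set of dilates, then forcing every nontrivial superlevel set to be homothetic to that shape---matches the paper's argument that the ellipsoid $D$ is independent of $r$ and $t$ because $K=(|K|/|D|)^{1/n}D$.
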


\begin{proof}
    For $z\in\mathbb{R}^n\backslash\{o\}$,
    \[\|z\|_K^{-q}=\int_0^{\infty}\chi_{r^{-1/q}K}(z)dr.\]
    Using the Fubini's theorem, we have
    \begin{align*}
        \di\frac{\min\{f(x), f(y)\}}{\|x-y\|_K^{q}}dxdy&=\int_{0}^{\infty}\di\frac{\chi_{\{f\ge t\}}(x)\chi_{\{f\ge t\}}(y)}{\|x-y\|_K^{q}}dxdydt\\
        &=\int_0^{\infty}\int_0^{\infty}\di\chi_{\{f\ge t\}}(x)\chi_{r^{-1/q}K}(x-y)\chi_{\{f\ge t\}}(y)dxdydrdt.
    \end{align*}
    By the Riesz rearrangement inequality, Theorem \ref{rri-cha}, we have
    \begin{equation}\label{anrri-0n}
        \begin{aligned}
        \di\chi_{\{f\ge t\}}(x)\chi_{r^{-1/q}K}&(x-y)\chi_{\{f\ge t\}}(y)dxdy\\
        &\leq\di\chi_{\{f^{\star}\ge t\}}(x)\chi_{r^{-1/q}K^{\star}}(x-y)\chi_{\{f^{\star}\ge t\}}(y)dxdy,
    \end{aligned}
    \end{equation}   
    which implies that
    \[\di\frac{\min\{f(x), f(y)\}}{\|x-y\|_K^{q}}dxdy\leq\di\frac{\min\{f^{\star}(x), f^{\star}(y)\}}{\|x-y\|_{K^{\star}}^{q}}dxdy.\]

    If equality holds, then \eqref{anrri-0n} is an equality for almost all $(t,r)\in (0,\infty)^2$. For sufficiently large $r>0$, the assumptions of Theorem \ref{rri-cha} are satisfied. Consequently, up to sets of measure zero, we have
\[\{f\ge t\} = x + a D, \qquad r^{-1/q} K = b D,\]
where $a,b>0$, $x\in \rn$, and $D$ is a centered ellipsoid with the same volume as $K$. It follows that 
$K = \big(\frac{|K|}{|D|}\big)^{1/n} D$, showing that $D$ is independent of $t$ and $r$, which completes the proof.
\end{proof}

The next lemma shows that, for $\alpha\in(0,n)$, the volume of $\Laf$ does not increase under Schwarz symmetrals.

\begin{lem}\label{rri-v}
    Let $\alpha\in(0,n)$. For non-negative $f\in L^{\frac{n}{n+\alpha}}(\rn)$,
    \[|\Laf|\leq |\Laf^{\star}|.\]
    If $|\Laf^{\star}|<\infty$, there is equality if and only if the level set $\{f\ge t\}$ has measure zero or is homothetic to an ellipsoid for almost all $t>0$, up to null sets.
\end{lem}

\begin{proof}
    We first assume that $|\Laf|<\infty$. By letting $q=n-\alpha$  in Lemma~\ref{rri-min} and combining it with \eqref{a-i}, we obtain
    \[\tilde{V}_{\alpha}(K,\Laf)\leq \tilde{V}_{\alpha}(K^{\star}, \Laf^{\star}).\]
    With $K=\Laf$ and the dual mixed volume inequality \eqref{dmvi1}, we obtain
    \begin{align*}
    |\Laf|&=\tilde{V}_{\alpha}(\Laf, \Laf)\\
         &\leq \tilde{V}_{\alpha}((\Laf)^{\star}, \Laf^{\star})\\
         &\leq |(\Laf)^{\star}|^{\frac{n-\alpha}{n}}|\Laf^{\star}|^{\frac{\alpha}{n}}\\
         &=|\Laf|^{\frac{n-\alpha}{n}}|\Laf^{\star}|^{\frac{\alpha}{n}},
    \end{align*}
    which implies that
    \[|\Laf|\leq |\Laf^{\star}|.\]
    The equality case follows from Lemma \ref{rri-min}.

    If $|\Laf|=\infty$, define
    \[f_k(x)=f(x)\chi_{kB^n}(x),~~k\in\mathbb N.\]
    By the monotone convergence theorem,
    \[\lim_{k\rightarrow\infty}\int_0^{\infty}r^{\alpha-1}\int_{\rn}\min\{f_k(x), f_k(x+ru)\}dxdr=\int_0^{\infty}r^{\alpha-1}\int_{\rn}\min\{f(x), f(x+ru)\}dxdr,\]
    with the convergence being monotone increasing. Applying the monotone convergence theorem once more then gives
    \begin{align*}
        \lim_{k\rightarrow\infty}\int_{\sn}\Big(\int_0^{\infty}r^{\alpha-1}\int_{\rn}\min\{&f_k(x),f_k(x+ru)\}dxdr\Big)^{\frac{n}{\alpha}}du\\
        &=\int_{\sn}\Big(\int_0^{\infty}r^{\alpha-1}\int_{\rn}\min\{f(x), f(x+ru)\}dxdr\Big)^{\frac{n}{\alpha}}du.
    \end{align*}
    That is,
    \begin{equation}\label{lim-k}
        \lim_{k\rightarrow\infty}|\La f_k|=|\Laf|=\infty.
    \end{equation}

    Since $f\in L^{\frac{n}{n+\alpha}}(\rn)$ and $f_k$ has compact support, $|\La f_k|<\infty$. By the definition of the Schwarz symmetral, we have $(f_k)^{\star}\leq f^{\star}$. Then the first part of the proof implies that
    \[|\La f_k|\leq |\La(f_k)^{\star}|\leq |\La f^{\star}|.\]
    Taking the limit as $k\rightarrow \infty$ in the previous inequality, we obtain from \eqref{lim-k} that
    \[|\La f^{\star}|=\infty,\]
    which gives the desired result.
\end{proof}

\begin{proof}[Proof of Theorem 1]
    By \eqref{a-i} and the dual mixed volume inequality \eqref{dmvi1}, we have
    \begin{equation*}
        \di\frac{\min\{f(x),f(y)\}}{|x-y|^{n-\alpha}}dxdy=n\tilde{V}_{\alpha}(B^n, \Laf)\leq n\omega_n^{\frac{n-\alpha}{n}}|\Laf|^{\frac{\alpha}{n}}.
    \end{equation*}
    There is equality precisely when $\Laf$ is a ball, which is the case for radially symmetric functions.
    
    For the second inequality, noting that $\|f\|_{\frac{n}{n+\alpha}}=\|f^{\star}\|_{\frac{n}{n+\alpha}}$ and that $\La f^{\star}$ is a ball, we obtain from the chord Sobolev inequality \eqref{chord-Sob}, \eqref{a-i}, and Lemma \ref{rri-v} that
    \begin{align*}
        \sigma_{n,\alpha}\|f\|_{\frac{n}{n+\alpha}}&\ge \di\frac{\min\{f^{\star}(x), f^{\star}(y)\}}{|x-y|^{n-\alpha}}dxdy\\
        &=n\tilde{V}_{\alpha}(B^n, \La f^{\star})\\
        &=n\omega_n^{\frac{n-\alpha}{n}}|\La f^{\star}|^{\frac{\alpha}{n}}\\
        &\ge n\omega_n^{\frac{n-\alpha}{n}}|\Laf|^{\frac{\alpha}{n}}.
    \end{align*}
    If equality holds throughout, it follows from the chord Sobolev inequality  that $f^{\star}=c\chi_B$, where $B$ is a ball. By Lemma \ref{rri-v}, we have $f=c\chi_D$, where $D$ is an ellipsoid.
\end{proof}

For $\alpha>n$, we state the following analogue of Lemma \ref{rri-min}, which also generalizes \cite[Lemma 3.2]{BC}. The proof follows a similar approach and is included for completeness.

\begin{lem}\label{q>n}
    Let $q>0$ and $K\subset\rn$ be a star-shaped set with measurable radial function and $|K|>0$. If  $f:\rn\rightarrow [0,\infty)$ is an integrable function such that
    \[\di\min\{f(x), f(y)\}\|x-y\|_K^q dxdy<\infty,\]
    then
    \[\di\min\{f(x), f(y)\}\|x-y\|_K^qdxdy \geq \di\min\{f^{\star}(x), f^{\star}(y)\}\|x-y\|_{K^{\star}}^qdxdy.\]
    Equality holds if and only if $K$ is equivalent to a centered ellipsoid $D$ and for almost all  $t>0$, the level set $\{f\ge t\}$ has measure zero or is homothetic to $D$ up to sets of measure zero.
\end{lem}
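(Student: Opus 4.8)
The approach mirrors the proof of Lemma \ref{rri-min}, but with the key difference that $\|z\|_K^q$ is an \emph{increasing} radial quantity rather than a decreasing one, so the layer-cake decomposition must be applied to the complementary indicator and the inequality direction in the Riesz rearrangement step gets reversed after a sign flip. Concretely, I would write, for $z\neq o$ and $q>0$,
\[
\|z\|_K^q = \int_0^\infty \bigl(1-\chi_{t^{1/q}K}(z)\bigr)\,dt,
\]
which is valid since $z\in t^{1/q}K \iff \|z\|_K^q \le t$. Substituting this together with $\min\{f(x),f(y)\}=\int_0^\infty \chi_{\{f\ge r\}}(x)\chi_{\{f\ge r\}}(y)\,dr$ and using Fubini, the left-hand side becomes
\[
\int_0^\infty\!\!\int_0^\infty \di \chi_{\{f\ge r\}}(x)\bigl(1-\chi_{t^{1/q}K}(x-y)\bigr)\chi_{\{f\ge r\}}(y)\,dx\,dy\,dt\,dr.
\]
For fixed $r,t$, the inner integral equals $|\{f\ge r\}|^2 - \di \chi_{\{f\ge r\}}(x)\chi_{t^{1/q}K}(x-y)\chi_{\{f\ge r\}}(y)\,dx\,dy$; the first term is invariant under Schwarz symmetrization, and the subtracted triple integral can only \emph{increase} under symmetrization by Theorem \ref{rri} (with the characterization of equality from Theorem \ref{rri-cha}). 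Hence the whole expression decreases under replacing $f,K$ by $f^\star,K^\star$, which is exactly the claimed reversed inequality. One should note here that $|K^\star|=|K|$ so $t^{1/q}K^\star = (t^{1/q}K)^\star$, and that the integrability hypothesis guarantees all the interchanges of integration are legitimate and the subtracted terms are finite.

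For the equality case I would argue as in Lemma \ref{rri-min}: equality forces, for a.e.\ $(r,t)$ with $|\{f\ge r\}|>0$, equality in
\[
\di \chi_{\{f\ge r\}}(x)\chi_{t^{1/q}K}(x-y)\chi_{\{f\ge r\}}(y)\,dx\,dy = \di \chi_{\{f\ge r\}^\star}(x)\chi_{t^{1/q}K^\star}(x-y)\chi_{\{f\ge r\}^\star}(y)\,dx\,dy.
\]
For $t$ small enough (so that $t^{1/q}K$ is small relative to $\{f\ge r\}$) the triangle-inequality hypothesis $|a-b|<\gamma<a+b$ of Theorem \ref{rri-cha} is satisfied, and Burchard's theorem yields a common centered ellipsoid $D$ with $\{f\ge r\}$ homothetic to $D$ and $t^{1/q}K$ a centered dilate of $D$; since $K=(|K|/|D|)^{1/n}D$, the ellipsoid $D$ is independent of $r$ and $t$. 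Conversely, if $K$ is a centered ellipsoid and every superlevel set is homothetic to it (or null), then a single volume-preserving linear map transports the configuration to the symmetric one, giving equality.

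The main obstacle is purely bookkeeping: because we subtract the symmetrization-stable term $|\{f\ge r\}|^2$, one must check that $\int_0^\infty\int_0^\infty |\{f\ge r\}|^2\,dt\,dr = \infty$ individually, so the argument cannot be phrased as "LHS $=$ constant $-$ (increasing term)" with both pieces finite; instead I would fix a large cutoff $T$, split $\int_0^\infty dt = \int_0^T + \int_T^\infty$, handle $\int_T^\infty$ (where $t^{1/q}K \supset \{f\ge r\}$ for the relevant $r$, making the bracket identically $0$ and the contribution vanishing) separately, and apply the Riesz inequality only on the finite window $[0,T]$ where everything is integrable thanks to the hypothesis; then let $T\to\infty$. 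Alternatively, and more cleanly, one reduces first to $f$ with bounded support by the monotone truncation $f_k = f\chi_{kB^n}$ exactly as in Lemma \ref{rri-v}, proves the inequality for each $f_k$ (where all integrals are manifestly finite), and passes to the limit; the equality analysis is then carried out on the original $f$ using the structure already extracted. I expect the truncation-and-limit route to be the one actually written out.
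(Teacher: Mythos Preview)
Your approach is essentially identical to the paper's: same layer-cake representation $\|z\|_K^q=\int_0^\infty(1-\chi_{t^{1/q}K}(z))\,dt$, same pointwise application of the Riesz inequality to the triple convolution, and the same use of Burchard's theorem for small $t$ in the equality analysis. The bookkeeping concern you raise is a non-issue and the proposed cutoff/truncation is unnecessary: since for each fixed $(r,t)$ the integrand $\chi_{\{f\ge r\}}(x)\bigl(1-\chi_{t^{1/q}K}(x-y)\bigr)\chi_{\{f\ge r\}}(y)$ is non-negative and its $dxdy$-integral is finite (bounded by $|\{f\ge r\}|^2<\infty$ for a.e.\ $r$, as $f\in L^1$), the pointwise inequality you already established integrates directly over $(r,t)$ without ever splitting globally into two infinite pieces --- this is exactly how the paper proceeds.
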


\begin{proof}
For $z\in\mathbb{R}^n\backslash\{o\}$, 
\begin{align*}
    \|z\|_K^q=\int_0^\infty \chi_{\mathbb{R}^n\setminus r^{1/q}K}(z)dr.
\end{align*}
Using  Fubini's theorem, we obtain
\begin{equation}\label{cal-adi}
\begin{aligned}
\int_{\mathbb{R}^n}  \int_{\mathbb{R}^n} \min\{f(x),&f(y)\}\|x-y\|_K^qdxdy\\
&= \int_0^\infty  \int_{\rn}  \int_{\rn} \chi_{\{f\ge t\}}(x)\|x-y\|_K^q \chi_{\{f\ge t\}}(y)dxdydt\\
 &= \int_0^\infty  \int_0^\infty \int_{\rn}  \int_{\rn}\chi_{\{f\ge t\}}(x)\chi_{\mathbb{R}^n\setminus r^{1/q}K}(x-y) \chi_{\{f\ge t\}}(y)dxdydtdr\\
 &=\int_0^\infty  \int_0^\infty \int_{\rn}  \int_{\rn}\chi_{\{f\ge t\}}(x)(1-\chi_{r^{1/q}K}(x-y)) \chi_{\{f\ge t\}}(y)dxdydtdr.
\end{aligned}
\end{equation}

The Riesz rearrangement inequality in Theorem \ref{rri-cha}, implies that
\begin{equation}\label{rri-chi-step}
    \begin{aligned}
    \di \chi_{\{f\ge t\}}(x)\chi_{r^{1/q}K}(x-y)&\chi_{\{f\ge t\}}(y)dxdy\\
    &\le\di \chi_{\{f^{\star}\ge t\}}(x)\chi_{r^{1/q}K^{\star}}(x-y)\chi_{\{f^{\star}\ge t\}}(y)dxdy.
\end{aligned}
\end{equation}
Note that  \(\int_{\rn}\chi_{\{f\ge t\}}(x)dx<\infty\) for almost all $t>0$, as $f\in L^1(\rn)$. Combining \eqref{rri-chi-step} with the calculation in \eqref{cal-adi}, we have
\begin{align}\label{ri}
 \int_{\mathbb{R}^n}  \int_{\mathbb{R}^n} \min\{f(x),f(y)\}\|x-y\|_K^qdxdy \geq \int_{\mathbb{R}^n}  \int_{\mathbb{R}^n} \min\{f^\star(x),f^\star(y)\}\|x-y\|_{K^\star}^qdxdy.   
\end{align}

Moreover, if equality holds in \eqref{ri}, then there exists a null set $N\subset(0,\infty)^2$ such that equality holds in \eqref{rri-chi-step} for all $(t,r)\in(0,\infty)^2\setminus N$. For almost all $r\in (0,\infty)$, we have $(t,r)\in (0,\infty)^2\setminus N$ for almost all $t>0$. For $r$ sufficiently small, Theorem~\ref{rri-cha} guarantees the existence of a centered ellipsoid $D$ and $x\in\rn$  such that
\begin{align*}
\{f\ge t\}=x+a D \quad \text{and } \quad r^{1/q}K=b D.    
\end{align*}
Since $K=(\frac{|K|}{|D|})^{1/n}D$, the ellipsoid $D$ is independent of $r$ and $t$, which concludes the proof.
\end{proof}

Similarly, for $\alpha>n$, we show that $|\Laf|$ does not decrease under Schwarz symmetrals.

\begin{lem}\label{vii}
Let $\alpha > n$ and $f: \rn\to [0,\infty)$ be  integrable. If $|\La f|<\infty$, then
\begin{align*}
    |\La f|\geq |\La f^\star|.
\end{align*}
Equality holds if and only if the level set $\{f\ge t\}$ has measure zero or is homothetic to an ellipsoid for almost all $t>0$, up to null sets.
\end{lem}

\begin{proof}
By letting $q=\alpha-n$  in Lemma~\ref{q>n} and combining it with \eqref{a-i}, we obtain
\[\di\frac{\min\{f(x), f(y)\}}{\|x-y\|_K^{n-\alpha}}dxdy=n\tilde{V}_{\alpha}(K,\Laf)\ge n\tilde{V}_{\alpha}(K^{\star},\La f^{\star}).\]
With $K=\Laf$ and the dual mixed volume inequality \eqref{dmvi2}, we have
\begin{align*}
 |\La f|& =  \tilde{V}_\alpha (\La f, \La f) \\
 & \geq \tilde{V}_\alpha ((\La f)^\star, \La f^\star)\\
 & \geq |(\La f)^\star|^{\frac{n-\alpha}{n}}|\La f^\star|^\frac{\alpha}{n}\\
 &= |\La f|^{\frac{n-\alpha}{n}}|\La f^\star|^\frac{\alpha}{n},
\end{align*}
which implies that
\[|\Laf|\ge |\La f^{\star}|.\]
The equality case follows from Lemma \ref{q>n}.
\end{proof}

Building on the preceding lemmas, we now establish the affine chord Sobolev inequalities for $\alpha>n$.

\begin{proof}[Proof of Theorem 2]
 If the right-most side is $\infty$, the inequality holds trivially. We then assume that
\begin{align*}
\int_{\rn} \int_{\rn}   \frac{\min\{f(x),f(y)\}}{|x-y|^{n-\alpha}} dxdy <\infty.
\end{align*}
It follows from \eqref{a-i} and the dual mixed volume inequality \eqref{dmvi2} that
\begin{align*}
 \int_{\rn} \int_{\rn}   \frac{\min\{f(x),f(y)\}}{|x-y|^{n-\alpha}} dxdy = n \tilde{V}_{\alpha} (B^n, \La f) \geq n \omega^{\frac{n-\alpha}{n}} |\La f|^\frac{\alpha}{n}.   
\end{align*}
By the equality case in the dual mixed volume inequality,  $\La f$ must be a ball, which occurs when $f$ is radially symmetric.

For the first inequality, we may assume that $|\La f|$ is finite. Note that $\La f^{\star}$ is a ball. Then Lemma \ref{vii} and \eqref{a-i} imply that
\[n\omega_n^{\frac{n-\alpha}{n}}|\Laf|^{\frac{\alpha}{n}}\ge n\omega_n^{\frac{n-\alpha}{n}}|\La f^{\star}|^{\frac{\alpha}{n}}=n\tilde{V}_{\alpha}(B^n, \La f^{\star})=\di\frac{\min\{f^{\star}(x), f^{\star}(y)\}}{|x-y|^{n-\alpha}}dxdy.\]
Combining the chord Sobolev inequality \eqref{chord-Sob>n} with $\|f\|_1=\|f^{\star}\|_1$ and $\|f\|_{\infty}=\|f^{\star}\|_{\infty}$, we deduce
\[\sigma_{n,\alpha}\|f\|_1^{\frac{n+\alpha}{n}}\|f\|_{\infty}^{-\frac{\alpha}{n}}\leq \di\frac{\min\{f^{\star}(x), f^{\star}(y)\}}{|x-y|^{n-\alpha}}dxdy\leq n\omega_n^{\frac{n-\alpha}{n}}|\Laf|^{\frac{\alpha}{n}}.\]
The equality case follows from the optimizers of \eqref{chord-Sob>n} together with Lemma \ref{vii}.
\end{proof}

\section{Radial mean bodies for log-concave functions}\label{Radial mean bodies for log-concave functions}\label{Raf}

In this section, we begin by recalling some basic notation and properties of the classical radial mean bodies. Let $K\subset \mathbb{R}^n$ be a convex body. For $\alpha >-1$ and $u\in\sn$, Gardner and Zhang \cite{GZ} defined the radial $\alpha$-th mean body of $K$ by its radial function: 
\begin{align*}
    \rho_{\rR_\alpha K}(u)^\alpha =\frac{1}{|K|}\int_K\rho_{K-x}(u)^\alpha d x
\end{align*}
for $\alpha \neq 0$ and by
\begin{equation*}
    \log\rho_{\rR_0 K}(u)=\frac{1}{|K|}\int_K\log \rho_{K-x}(u) dx.
\end{equation*}

They further defined $\rR_{\infty} K$ consistently by
\[\rho_{\rR_{\infty} K}(u)=\lim_{\alpha\rightarrow \infty}\rho_{\Ra K}(u)=\max_{x\in K}\rho_{K-x}(u),\]
so that $\rR_{\infty} K$ coincides with the difference body of $K$, namely,
\[\rR_{\infty} K={\rm D}K=K-K.\]

\subsection{Affine isoperimetric inequalities for \texorpdfstring{$\boldsymbol{\Raf}$}{}}\hfill

Haddad and Ludwig \cite{HL1} proved the following affine isoperimetric inequality for $\Ra K$, with full equality characterization.

\begin{thm}\cite[Theorem 20]{HL1}\label{Theorem 20 HL}
    Let $K\subset\rn$ be a convex body. Then
    \begin{equation*}
        \begin{aligned}
            \frac{|\Ra K|}{|K|}&\leq \frac{|\Ra B^n|}{|B^n|},&& \alpha\in(-1,n),\\[0.5em]
            \frac{|\Ra K|}{|K|}&\geq \frac{|\Ra B^n|}{|B^n|},&& \alpha>n,
        \end{aligned}
    \end{equation*}
    with equality if and only if $K$ is an ellipsoid.
\end{thm}

For $\alpha>-1$, the authors \cite{BC} defined the radial mean body $\Raf$ for a non-zero, log-concave function $f\in L^1(\rn)$ by
\begin{equation}\label{rdm-superlevel set}
    \rho_{\Raf}(u)^{\alpha}=\int_0^{\|f\|_{\infty}}\rho_{\Ra \{f\ge t\} }(u)^\alpha d\mu_f(t),~~\alpha\neq 0,
\end{equation}
and
\begin{equation}\label{rdm0-superlevel set}
    \log\rho_{\rR_0f}(u)=\int_0^{\|f\|_{\infty}}\log\rho_{\rR_0 \{f\ge t\} }(u)d\mu_f(t),
\end{equation}
where $d\mu_f(t)=\frac{ |\{f\ge t\}| }{\|f\|_1}dt$ is a probability measure. See \cite{LSU} for the definition via Ball bodies.

The following result in \cite{BC} shows that $\Raf$ is proportional to the fractional polar projection body $\Pia f$ and the star-shaped set $\Laf$. Moreover, by the continuity of $\rho_{\Raf}(u)$ with respect to $\alpha$, the authors also gave an explicit formula for $\rho_{\rR_0f}$.

\begin{lem}\cite[Proposition 6.2, Lemma 6.4]{BC}
    For non-zero, log-concave $f\in L^1(\rn)$, 
    \begin{equation}\label{fppb}
        \rho_{\Raf}(u)^{\alpha}=\frac{-\alpha}{2\|f\|_1}\int_0^{\infty}r^{\alpha-1}\int_{\rn}|f(x)-f(x+ru)|dxdr
    \end{equation}
    for $\alpha\in(-1,0)$ and
    \begin{equation}\label{laf}
        \rho_{\Raf}(u)^{\alpha}=\frac{\alpha}{\|f\|_1}\int_0^{\infty}r^{\alpha-1}\int_{\rn}\min \{f(x), f(x+ru)\}dxdr
    \end{equation}
    for $\alpha>0$.

    For $\alpha=0$,
    \begin{equation*}
            \log\rho_{\rR_0f}(u)=-\gamma+\int_0^{\infty}\frac{1}{r}\Big(\frac{1}{\|f\|_1}\int_{\rn}\min\{f(x), f(x+ru)\}dx-e^{-r}\Big)dr,
    \end{equation*}
    where $\gamma=-\Gamma^{\prime}(1)$ is the Euler constant.
\end{lem}

The following lemma describes the affine covariance of $\Raf$, where ${\rm GL}(n)$ denotes the group of general linear transformations on $\rn$.

\begin{lem}\label{Raf-affco}
  Let $f\in L^1(\rn)$ be a non-zero, log-concave function. Then 
    \[\Ra(cf)=\Raf~~\text{and}~~\Ra (f\circ \lambda_y)=\Ra f,\]
    where $c>0$ is a constant, $y\in\rn$ and $f\circ \lambda_y(x)=f(x-y)$. Moreover, for $\phi\in {\rm GL}(n)$,
    \[\Ra(f\circ \phi^{-1})=\phi\Raf.\]
\end{lem}

\begin{proof}
    By \eqref{fppb} and \eqref{laf}, the radial mean body $\Raf$ is invariant under scaling and translations. It suffices to show $\Ra(f\circ \phi^{-1})=\phi\Raf$.

   We treat the case $\alpha>n$. The case $\alpha\in(-1,0)$ can be handled similarly.
 For $\phi\in {\rm GL}(n)$, note that $\|f\circ \phi^{-1}\|_1=|\det \phi|\|f\|_1$. Since
    \begin{equation*}
        \begin{aligned}
            \int_{\rn}\min\{f(\phi^{-1}x), f(\phi^{-1}(x+ru))\}dx=|\det \phi|\int_{\rn}\min\{f(x), f(x+r\phi^{-1}u)\}dx,
        \end{aligned}
    \end{equation*}
    by \eqref{laf}, we have
    \[\rho_{\Ra (f\circ\phi^{-1})}(u)^{\alpha}=\rho_{\Raf}(\phi^{-1}u)^{\alpha}=\rho_{\phi\Raf}(u)^{\alpha},\]
    which concludes the proof.
\end{proof}

Recall that the affine chord Sobolev inequalities can be written as
\begin{equation}\label{ASC--1n}
    \begin{aligned}
        2\sigma_{n,\alpha}\|f\|_{\frac{n}{n+\alpha}}&\leq n\omega_n^{\frac{n-\alpha}{n}}|\Pi^{*,-\alpha}f|^{\frac{\alpha}{n}},&&\alpha\in(-1,0),\\
        \sigma_{n,\alpha}\|f\|_{\frac{n}{n+\alpha}}&\ge n\omega_n^{\frac{n-\alpha}{n}}|\Laf|^{\frac{\alpha}{n}},&&\alpha\in(0,n)
    \end{aligned}
\end{equation}
and
\begin{equation}\label{ACS-n}
    \sigma_{n,\alpha}\|f\|_1^{\frac{n+\alpha}{n}}\|f\|_{\infty}^{-\frac{\alpha}{n}}\leq n\omega_n^{\frac{n-\alpha}{n}}|\Laf|^{\frac{\alpha}{n}},~~\alpha>n.
\end{equation}

By \eqref{laf} and the affine chord Sobolev inequalities, we establish the following affine isoperimetric inequalities for $\Raf$. We first focus on non-negative $\alpha$.

Note that, when $\alpha=n$ and $f\in L^{\frac{1}{2}}(\rn)$ is log-concave, the chord Sobolev inequalities become the following two trivial inequalities: 
\begin{equation}\label{chord Sob-n}
    \frac{\|f\|_1^2}{\|f\|_{\infty}}\leq \di \min\{f(x), f(y)\}dxdy\leq \|f\|_{\frac{1}{2}},
\end{equation}
where equality holds if and only if $f=c\chi_K$ for some convex body $K\subset\rn$ and constant $c>0$. Since
\[|\rR_n f|=\frac{1}{\|f\|_1}\di \min\{f(x), f(y)\}dxdy,\]
and $|\rR_n K|=|K|$ for any convex body $K\subset\rn$, both \eqref{-1n} and \eqref{n} hold for $\alpha=n$ and equality holds if and only if $f=c\chi_K$.

\begin{thm}
    Let $f\in L^{1}(\rn)$ be log-concave and non-zero. Then
    \begin{align}
        \frac{|\Raf|}{\|f\|_{\frac{1}{2}}\|f\|_1^{-1}}&\leq \frac{|\Ra B^n|}{|B^n|},\quad\alpha\in(-1,0)\cup(0,n), \label{-1n}\\
        \frac{|\Ra f|}{\|f\|_1\|f\|_{\infty}^{-1}}&\geq \frac{|\Ra B^n|}{|B^n|},\quad\alpha>n\label{n}.
    \end{align}
    In both cases, equality holds if and only if $f$ is a constant multiple of the characteristic function of an ellipsoid.
\end{thm}

\begin{proof}
    Inequality \eqref{n} and the equality characterization follow from \eqref{ACS-n} directly. We turn to $\alpha\in(-1,0)\cup(0,n)$. By the affine chord Sobolev inequality \eqref{ASC--1n}, we obtain
    \[\frac{|\Raf|}{\|f\|_{\frac{n}{n+\alpha}}^{n/\alpha}\|f\|_1^{-n/\alpha}}\leq \frac{|\Ra B^n|}{|B^n|}.\]

    For brevity, we denote by $\bar f(x)=f(x)/\|f\|_1$. Then $\bar f(x)dx$ is a probability measure on $\rn$. Combining this with the monotonicity of $L^p$ norms, for $\alpha\in(-1,n)\setminus\{0\}$, we have
    \begin{equation}\label{L^p mono}
        \begin{aligned}
            \|f\|_{\frac{n}{n+\alpha}}^{n/\alpha}\|f\|_1^{-n/\alpha}&=\Big(\int_{\rn}\bar f(x)^{\frac{n}{n+\alpha}} dx\Big)^{\frac{n+\alpha}{\alpha}}\\
            &=\Big(\int_{\rn}\big(\bar f(x)^{-1}\big)^{\frac{\alpha}{n+\alpha}} \bar f(x)dx\Big)^{\frac{n+\alpha}{\alpha}}\leq \Big(\int_{\rn}\bar f(x)^{\frac{1}{2}}dx\Big)^2=\|f\|_{\frac{1}{2}}\|f\|_1^{-1},
        \end{aligned}
    \end{equation}
    where equality holds if and only if $f=c\chi_K$ for some $c>0$ and convex body $K\subset\rn$. Therefore, we obtain
    \begin{equation}\label{step}
        \frac{|\Raf|}{\|f\|_{\frac{1}{2}}\|f\|_1^{-1}}\leq \frac{|\Ra B^n|}{|B^n|}.
    \end{equation}
    If equality holds, then \eqref{L^p mono} implies that $f=c\chi_K$ where $K\subset\rn$ is a convex body and $c>0$. Hence \eqref{step} reduces to the geometric inequality \eqref{Theorem 20 HL}, whose equality cases imply that $K$ is an ellipsoid.
\end{proof}

Next we consider the case $\alpha=0$. For non-zero, log-concave $f\in L^{\frac12}(\rn)$ and fixed $u\in\sn$, the radial function $\rho_{\Raf}(u)$ is continuous in $\alpha>-1$, as shown in \cite{BC}. Therefore, taking limits on both sides of \eqref{step} yields the affine isoperimetric inequality for $\rR_0 f$, but the equality characterization does not follow from this argument. In the following theorem, we use \eqref{Theorem 20 HL} together with the definition of $\rR_0 f$ to establish the inequality with its equality characterization.

\begin{thm}
    Let $f\in L^{\frac{1}{2}}(\rn)$ be a non-zero, log-concave function. Then
    \[\frac{|\rR_0f|}{\|f\|_{\frac{1}{2}}\|f\|_1^{-1}}\leq \frac{|\rR_0 B^n|}{|B^n|},\]
    where equality holds if and only if $f$ is a constant multiple of the characteristic function of an ellipsoid.
\end{thm}

\begin{proof}
    By \eqref{rdm0-superlevel set},
    \[\rho_{\rR_0f}(u)^n=\exp\Big(\int_0^{\|f\|_{\infty}}\log \rho_{\rR_0 \{f\ge t\} }(u)^nd\mu_f(t)\Big).\]
    Since $e^{t}$ is a convex function on $\tr$ and $\mu_f$ is a probability on $\tr$, it follows from Jensen's inequality that
    \begin{equation*}
        \rho_{\rR_0f}(u)^n\leq \int_0^{\|f\|_{\infty}}\rho_{\rR_0\{f\ge t\}}(u)^nd\mu_f(t).
    \end{equation*}
    This, together with Theorem \ref{Theorem 20 HL}, implies that
    \begin{equation*}
        \begin{aligned}
            |\rR_0f|=\frac{1}{n}\int_{\sn}\rho_{\rR_0 f}(u)^ndu&\leq \frac{1}{n}\int_{\sn}\int_0^{\|f\|_{\infty}}\rho_{\rR_0 \{f\ge t\} }(u)^n d\mu_f(t)du\\
            &=\int_0^{\|f\|_{\infty}}|\rR_0\{f \ge t\}|d\mu_f(t)\\&\leq \frac{|\rR_0 B^n|}{|B^n|}\int_0^{\|f\|_{\infty}}\frac{|\{f\ge t\}|^2}{\|f\|_1}dt\\
            &=\frac{|\rR_0 B^n|}{|B^n|}\di \frac{\min\{f(x), f(y)\} }{\|f\|_1}dxdy.
        \end{aligned}
    \end{equation*}
    By \eqref{chord Sob-n}, we obtain
    \begin{equation}\label{0-step}
        \frac{|\rR_0f|}{\|f\|_{\frac{1}{2}}\|f\|_1^{-1}}\leq \frac{|\rR_0 B^n|}{|B^n|}.
    \end{equation}
    If equality holds, then
    \[ \di \min\{f(x), f(y)\}dxdy=\|f\|_{\frac{1}{2}},\]
    which implies that $f=c\chi_K$ for some convex body $K\subset\rn$ and $c>0$. Hence \eqref{0-step} reduces to the geometric inequality in \eqref{Theorem 20 HL}. By the equality characterization of \eqref{Theorem 20 HL}, $K$ is an ellipsoid.
\end{proof}

\subsection{Monotonicity of \texorpdfstring{$\boldsymbol{\Raf}$}{}}\hfill

To consider the monotonicity of radial mean bodies, we recall the following result by Haddad and Ludwig \cite{HL1}.

\begin{lem}\cite[Theorem 3]{HL1}\label{R-1}
    For $\alpha\in(-1,0)$ and $f\in L^1(\rn)$ with bounded variation,
    \[\lim_{\alpha\rightarrow -1^+}(1+\alpha)\|f\|_1\rho_{\Raf}(u)^{\alpha}=\rho_{\Pi^* f}(u)^{-1}\]
    for $u\in\sn$.
\end{lem}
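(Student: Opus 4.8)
The plan is to reduce the statement to a standard Abelian limit for a one-parameter family of integrals, and then to recognize the resulting limit as the directional total variation of $f$, which is exactly the quantity that defines the polar projection body $\Pi^{*}f$ of a function of bounded variation.

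First I would express everything through the integral that defines $\Pif$. Unwinding the definitions \eqref{rmb-0} and \eqref{fpp}, for $u\in\sn$ one has
\[
(1+\alpha)\,\|f\|_1\,\rho_{\Raf}(u)^{\alpha}=\frac{-\alpha(1+\alpha)}{2}\int_0^{\infty}t^{\alpha-1}G_u(t)\,dt,\qquad G_u(t):=\int_{\rn}|f(x+tu)-f(x)|\,dx.
\]
Since $-\alpha\to1$ as $\alpha\to-1^{+}$, the claim reduces to showing $\lim_{\alpha\to-1^{+}}(1+\alpha)\int_0^{\infty}t^{\alpha-1}G_u(t)\,dt=\lim_{t\to0^{+}}G_u(t)/t=:L_u$ together with the identification $L_u=2\|u\|_{\Pi^{*}f}=2\,\rho_{\Pi^{*}f}(u)^{-1}$. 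For the Abelian limit I would split the integral at $t=1$: since $f\in L^1(\rn)$ one has $G_u(t)\le 2\|f\|_1$, so $\int_1^{\infty}t^{\alpha-1}G_u(t)\,dt\le 2\|f\|_1/(-\alpha)$ stays bounded and the factor $(1+\alpha)$ annihilates this tail. On $(0,1)$ write $G_u(t)=t\,H_u(t)$; using the identity $(1+\alpha)\int_0^{\delta}t^{\alpha}\,dt=\delta^{1+\alpha}\to1$, the bound $H_u\le 2\|f\|_1/\delta$ on $[\delta,1]$, and the fact that $H_u(t)\to L_u$ as $t\to0^{+}$, one obtains for every $\varepsilon>0$, with $\delta$ chosen so that $|H_u-L_u|<\varepsilon$ on $(0,\delta)$, that $\limsup_{\alpha\to-1^{+}}\bigl|(1+\alpha)\int_0^{1}t^{\alpha-1}G_u(t)\,dt-L_u\bigr|\le\varepsilon$, hence the Abelian limit equals $L_u$.

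The one delicate point — and the step I expect to be the main obstacle — is the existence of $L_u=\lim_{t\to0^{+}}\tfrac1t\|f(\cdot+tu)-f\|_{L^1(\rn)}$ and its identification with $2\|u\|_{\Pi^{*}f}$. For this I would invoke that an integrable log-concave function $f$ is of bounded variation: $f$ is locally Lipschitz on the interior of $\supp f$ and its restriction to every line $x_0+\tr u$ is unimodal, which forces the distributional directional derivative $D_uf$ to be a finite signed measure on $\rn$. Standard BV theory then gives $L_u=|D_uf|(\rn)$, and the definition of the polar projection body for functions of bounded variation (Zhang \cite{Zh2}, Wang \cite{Wa}) gives $\|u\|_{\Pi^{*}f}=\tfrac12\,|D_uf|(\rn)=\tfrac12 L_u$. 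Feeding this back through the displays above yields $(1+\alpha)\|f\|_1\rho_{\Raf}(u)^{\alpha}\to\tfrac12 L_u=\|u\|_{\Pi^{*}f}=\rho_{\Pi^{*}f}(u)^{-1}$, which is the assertion.

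An alternative that stays inside the machinery already developed is to start from Proposition \ref{formula with level sets}. Writing $K_r=\{f\ge r\}$, a convex body for almost every $r<\|f\|_\infty$, and using the identity $(1+\alpha)|K|\rho_{\Ra K}(u)^{\alpha}=\int_{K|u^{\perp}}|K\cap(y+\tr u)|^{\alpha+1}\,dy$ recorded in Section \ref{rdm}, one gets $(1+\alpha)\|f\|_1\rho_{\Raf}(u)^{\alpha}=\int_0^{\|f\|_\infty}\int_{K_r|u^{\perp}}|K_r\cap(y+\tr u)|^{\alpha+1}\,dy\,dr$. For fixed $r$ the inner integral tends to $\mathrm{vol}_{n-1}(K_r|u^{\perp})$ as $\alpha\to-1^{+}$, and the elementary bound $s^{\alpha+1}\le 1+s$ (valid for $s\ge0$, $\alpha+1\in(0,1)$) gives the $r$-uniform domination $(1+\alpha)|K_r|\rho_{\Ra K_r}(u)^{\alpha}\le \mathrm{vol}_{n-1}(K_r|u^{\perp})+|K_r|$, whose integral over $(0,\|f\|_\infty)$ equals $\|u\|_{\Pi^{*}f}+\|f\|_1<\infty$. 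Dominated convergence then gives $\lim_{\alpha\to-1^{+}}(1+\alpha)\|f\|_1\rho_{\Raf}(u)^{\alpha}=\int_0^{\|f\|_\infty}\mathrm{vol}_{n-1}(\{f\ge r\}|u^{\perp})\,dr$, which by the coarea formula together with Cauchy's projection formula equals $\|u\|_{\Pi^{*}f}=\rho_{\Pi^{*}f}(u)^{-1}$. This route avoids BV theory, at the cost of the routine verification that the dominating function lies in $L^1(0,\|f\|_\infty)$.
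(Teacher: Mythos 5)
The paper itself does not prove this lemma; it is imported verbatim from Haddad and Ludwig \cite[Theorem 3]{HL1}, so there is no in-paper proof to compare against. Both of your routes are sound.

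On your first route: you correctly unwind \eqref{rmb-0} and \eqref{fpp} (note that \eqref{fpp} as printed has a sign typo — the exponent should read $\rho_{\Pia f}(u)^{\alpha}$, not $\rho_{\Pia f}(u)^{-\alpha}$, as is clear from \eqref{Afrac-Sob} and from the way the formula is used in the proof of Proposition~\ref{formula with level sets}; your display is the correct one). The Abelian-limit computation $(1+\alpha)\int_0^\infty t^{\alpha-1}G_u(t)\,dt \to L_u$ is carried out correctly: the tail $\int_1^\infty$ is annihilated by the $(1+\alpha)$ prefactor, and near $0$ the factor $(1+\alpha)\int_0^\delta t^\alpha\,dt=\delta^{1+\alpha}\to 1$ does the work. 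The only place that requires more than you wrote is the assertion that $L_u=\lim_{t\to0^+}\tfrac1t\|f(\cdot+tu)-f\|_1$ exists and is finite for log-concave integrable $f$: it does, but the reason is not just local Lipschitzness of $f$ on the interior of its support (the jump at $\partial\supp f$ also contributes). The clean argument is that along each line $y+\tr u$ the restriction of $f$ is unimodal, so its one-dimensional total variation is $2\sup_t f(y+tu)$; since log-concave integrable functions have exponential decay, $\sup_t f(y+tu)\le Ce^{-c|y|}$ for $y\in u^\perp$, which is integrable, giving $|D_uf|(\rn)<\infty$ and hence $L_u=|D_uf|(\rn)=2\rho_{\Pi^*f}(u)^{-1}$ by the standard BV difference-quotient characterization.

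Your second route is a genuinely different and attractive argument that stays entirely within the paper's machinery: it passes through Proposition~\ref{formula with level sets}, reduces the functional limit to the geometric fact $\rho_{\Ra K}(u)^{\alpha}\to \mathrm{vol}_{n-1}(K|u^\perp)/|K|$ for each superlevel set $K=\{f\ge r\}$, and uses the elementary domination $s^{\alpha+1}\le 1+s$ to apply dominated convergence in $r$. The identity $\int_0^{\|f\|_\infty}\mathrm{vol}_{n-1}(\{f\ge r\}|u^\perp)\,dr=\|u\|_{\Pi^*f}$ is exactly the coarea/Cauchy computation and yields the finiteness of the dominating function — so this route does not truly avoid bounded-variation considerations, it just repackages them in a form that is natural given the paper's level-set viewpoint. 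Either route is acceptable; the second has the advantage that it makes explicit why log-concavity (rather than general BV) is assumed, since it leans on the convexity of the superlevel sets through Proposition~\ref{formula with level sets}.
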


We mainly use Lemma \ref{R-1} for log-concave functions $f\in L^1(\rn)$. By \cite[Lemma 5.4]{BC}, $\tH^{n-1}(\partial \{f\ge t\})$ is an integrable function on $\tr$. Hence the coarea formula \cite[Theorem 5.9]{EG}, implies that $f$ has bounded variation. The opposite endpoint $\alpha=\infty$ will be considered below; see also \cite{LSU} for a different approach.

{\begin{lem}\label{R_infty}
    Let $f\in L^1(\rn)$ be log-concave and non-zero. If $f$ has compact support, then
    \[\lim_{\alpha\rightarrow\infty}\rho_{\Ra f}(u)=\sup_{t>0}\rho_{{\rm D}\{f\ge t\}}(u)=\rho_{{\rm D}\, \supp (f)}(u),\]
    for $u\in\sn$.
\end{lem}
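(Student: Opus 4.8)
The plan is to evaluate $\rho_{\Raf}(u)$ through the one–dimensional integral it reduces to, rather than through the level–set formula of Proposition~\ref{formula with level sets} (there the exponent and the order of the mean body both carry $\alpha$, so monotone convergence does not apply directly). Set $g(y)=\int_{\rn}\min\{f(x),f(x+y)\}\,dx$. By the definitions of $\Raf$ and $\Laf$ together with Fubini,
\[\rho_{\Raf}(u)^{\alpha}=\frac{\alpha}{\|f\|_1}\int_0^{\infty}t^{\alpha-1}g(tu)\,dt,\]
and by Lemma~\ref{abg2.1} the function $g$ is even, log-concave, with $\|g\|_{\infty}=g(o)=\|f\|_1$ and compact support, since $\supp(g)\subseteq\supp(f)-\supp(f)$. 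The first step is to identify $R:=\rho_{{\rm D}\,\supp(f)}(u)$ with the length of the positivity segment of $g$ in direction $u$. Indeed $g(tu)>0$ iff $\{f>0\}\cap(\{f>0\}-tu)$ has positive measure, iff $tu\in\interior(\{f>0\})-\interior(\{f>0\})$; this last set is open, convex, symmetric, contains the origin in its interior, and its closure equals ${\rm D}\,\overline{\{f>0\}}={\rm D}\,\supp(f)$, so it has the same radial function $R$. Hence $\{t>0:g(tu)>0\}=(0,R)$, and in particular $g(tu)=0$ for $t\ge R$.

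Granting this, the limit follows from a Laplace–type estimate. For the upper bound, $g\le\|f\|_1$ gives $\rho_{\Raf}(u)^{\alpha}\le\alpha\int_0^{R}t^{\alpha-1}\,dt=R^{\alpha}$, so $\rho_{\Raf}(u)\le R$ for every $\alpha>0$. For the lower bound, fix $\delta\in(0,R)$: since $t\mapsto g(tu)$ is log-concave it is continuous and strictly positive on the open interval $(0,R)$, hence bounded below by some $c_\delta>0$ on $[R-\delta,R-\tfrac{\delta}{2}]$, and therefore
\[\rho_{\Raf}(u)^{\alpha}\ge\frac{\alpha}{\|f\|_1}\int_{R-\delta}^{R-\delta/2}c_\delta\,t^{\alpha-1}\,dt\ge\frac{\alpha c_\delta\delta}{2\|f\|_1}\,(R-\delta)^{\alpha-1}.\]
Taking $\alpha$-th roots, using $(a\alpha)^{1/\alpha}\to1$ for any fixed $a>0$, and letting $\alpha\to\infty$ gives $\liminf_{\alpha\to\infty}\rho_{\Raf}(u)\ge R-\delta$; since $\delta$ was arbitrary, $\lim_{\alpha\to\infty}\rho_{\Raf}(u)=R=\rho_{{\rm D}\,\supp(f)}(u)$.

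It remains to verify the middle equality $\sup_{r}\rho_{{\rm D}\{f\ge r\}}(u)=\rho_{{\rm D}\,\supp(f)}(u)$, where $r$ ranges over $(0,\|f\|_\infty)$ (which is $(0,1)$ under the normalization $\|f\|_\infty=1$, harmless since both sides of the lemma are invariant under $f\mapsto cf$). The bound $\le$ follows from $\{f\ge r\}\subseteq\supp(f)$, which gives ${\rm D}\{f\ge r\}\subseteq{\rm D}\,\supp(f)$ and hence $\rho_{{\rm D}\{f\ge r\}}\le\rho_{{\rm D}\,\supp(f)}$. For $\ge$, the superlevel sets increase to $\{f>0\}$ as $r\downarrow0$, so $\bigcup_{r\in(0,\|f\|_\infty)}{\rm D}\{f\ge r\}={\rm D}\{f>0\}$; since ${\rm D}\{f>0\}$ contains the origin in its interior and shares the radial function $R$ with its closure ${\rm D}\,\supp(f)$, every $\lambda<R$ satisfies $\lambda u\in{\rm D}\{f\ge r\}$ for some $r$, whence $\sup_{r}\rho_{{\rm D}\{f\ge r\}}(u)\ge\lambda$. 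Letting $\lambda\uparrow R$ closes the chain of equalities.

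The step requiring the most care is the first one: $g$ may fail to be continuous at the boundary of its support (for $f=\chi_K$, $g$ is the covariogram and does vanish there, but for a general log-concave $f$ one only knows $g$ is log-concave), so the argument must avoid evaluating $g$ at $t=R$ and instead exploit continuity and strict positivity of $g$ on the \emph{open} segment $(0,R)$. A secondary, routine point is the passage between $\interior(\{f>0\})$, $\{f>0\}$ and the convex body $\supp(f)$, which is settled by the elementary facts that these have the same difference body up to closure and the same radial function once the origin is an interior point.
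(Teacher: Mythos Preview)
Your proof is correct and takes a genuinely different route from the paper's. The paper works through the level-set representation of Proposition~\ref{formula with level sets}: it sandwiches $\rho_{\Ra\{f\ge r\}}(u)$ between $c_{n,\alpha}^{-1}\rho_{{\rm D}\{f\ge r\}}(u)$ and $\rho_{{\rm D}\{f\ge r\}}(u)$ using Jensen's inequality and the Gardner--Zhang reverse inclusion ${\rm D}K\subset c_{n,\alpha}\Ra K$ from \cite[Theorem~5.5]{GZ}, then passes to the limit using $c_{n,\alpha}\to1$. You instead work directly with the one-variable function $t\mapsto g(tu)$, identify its positivity interval $(0,R)$ via the difference of interiors, and run a Laplace-type estimate. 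Your argument is more elementary in that it avoids invoking the nontrivial Gardner--Zhang reverse inclusion and needs only the log-concavity (hence interior continuity) of $g$; the paper's argument, on the other hand, is shorter and ties the lemma more visibly to the level-set calculus developed in that section. Both settle the middle equality $\sup_r\rho_{{\rm D}\{f\ge r\}}(u)=\rho_{{\rm D}\,\supp(f)}(u)$ by the same monotone-exhaustion reasoning.
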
}

\begin{proof} 
    Since Jensen's inequality implies that $\rR_{\alpha} K\subset {\rm D} K$ for $\alpha>-1$, we have
    \[\rho_{\Ra f}(u)\leq \Big(\int_{0}^{\infty}\rho_{{\rm D}\{f\ge t\}}(u)^{\alpha}d\mu_f(t)\Big)^{1/\alpha}.\]
    Letting $\alpha\rightarrow\infty$, we obtain
    \begin{equation}\label{max 1}
        \limsup_{\alpha\rightarrow\infty}\rho_{\Ra f}(u)\leq\sup_{t>0}\rho_{{\rm D}\{f\ge t\}}(u)=\rho_{{\rm D}\, \supp (f)}(u).
    \end{equation}
    On the other hand, by the reversed inclusion ${\rm D}K\subset c_{n,\alpha}\rR_{\alpha} K$ in \eqref{GZ-mian5.5},
    \[\rho_{\Ra f}(u)\ge c_{n,\alpha}^{-1}\Big(\int_{0}^{\infty}\rho_{{\rm D}\{f\ge t\}}(u)^{\alpha}d\mu_f(t)\Big)^{1/\alpha},\]
    where $c_{n,\alpha}=(nB(\alpha+1,n))^{-1/\alpha}$. Note that $\lim_{\alpha\rightarrow\infty}c_{n,\alpha}=1$. Therefore we have
    \begin{equation}\label{max 2}
        \liminf_{\alpha\rightarrow\infty}\rho_{\rR_{\alpha} f}(u)\ge \sup_{t>0}\rho_{{\rm D}\{f\ge t\}}(u)=\rho_{{\rm D}\,\supp(f)}(u).
    \end{equation}
    Combining \eqref{max 1} with \eqref{max 2}, we then conclude the proof.
\end{proof}

This yields the natural definition of $\rR_{\infty}f$ given below.

\begin{defn}
    Let $f\in L^1(\rn)$ be log-concave with compact support. The body $\rR_{\infty} f$ is given by
    \[\rR_{\infty} f={\rm D}\,\supp(f).\]
    If $f$ is not compactly supported, $\rR_{\infty} f=\rn$.
\end{defn}

Recall that in the geometric case, Jensen's inequality implies that

\begin{equation}\label{inclusion1}
    \Ra K\subset \rR_{\beta} K\subset {\rm D}K,
\end{equation}
where Gardner and Zhang \cite{GZ} further showed that
\begin{equation}\label{inclusion2}
    \Pi^*K\subset ((1+\alpha)|K|)^{1/\alpha}\Ra K\subset ((1+\beta)|K|)^{1/\beta}\rR_{\beta} K\subset {\rm D}K
\end{equation}
for $-1<\alpha<\beta$.

Applying Jensen’s inequality, together with \eqref{inclusion1}, we immediately obtain
\[\rR_{\alpha} f\subset \rR_{\beta} f\subset \rR_{\infty} f,\]
for $-1<\alpha<\beta<\infty$. The following theorem provides a functional analogue of \eqref{inclusion2}.

\begin{thm}
    For non-zero, log-concave $f\in L^1(\rn)$,
    \[\|f\|_{\infty}\Pi^{*} f\subset \Big((1+\alpha)\frac{\|f\|_1}{\|f\|_{\infty}}\Big)^{1/\alpha} \Raf\subset \Big((1+\beta)\frac{\|f\|_1}{\|f\|_{\infty}}\Big)^{1/\beta}\rR_{\beta} f\subset \rR_{\infty} f,\]
    for $-1<\alpha<\beta<\infty$.
\end{thm}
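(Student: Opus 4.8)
The plan is to reduce everything to the corresponding geometric inclusions \eqref{inclusion1} and \eqref{inclusion2} for the superlevel sets $\{f\ge r\}$, using Proposition \ref{formula with level sets}, the definition \eqref{def-R0}, and Lemma \ref{R-1} and Lemma \ref{R_infty} for the two endpoints. Since $f$ is log-concave, each superlevel set $K_r:=\{f\ge r\}$ is a convex body (for $0<r<\|f\|_\infty$), so \eqref{inclusion2} applies to each $K_r$: namely $\rho_{\Pi^*K_r}(u)^{-1}\le ((1+\alpha)|K_r|)^{1/\alpha}\rho_{\rR_\alpha K_r}(u)\le ((1+\beta)|K_r|)^{1/\beta}\rho_{\rR_\beta K_r}(u)\le \rho_{DK_r}(u)$ for all $u\in\sn$.

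First I would prove the middle inclusion $\big((1+\alpha)\tfrac{\|f\|_1}{\|f\|_\infty}\big)^{1/\alpha}\Raf\subset\big((1+\beta)\tfrac{\|f\|_1}{\|f\|_\infty}\big)^{1/\beta}\rR_\beta f$ for $0<\alpha<\beta$. By Proposition \ref{formula with level sets},
\[
(1+\alpha)\tfrac{\|f\|_1}{\|f\|_\infty}\,\rho_{\Raf}(u)^\alpha=\tfrac{1+\alpha}{\|f\|_\infty}\int_0^{\|f\|_\infty}\rho_{\rR_\alpha K_r}(u)^\alpha|K_r|\,dr
=\int_0^{\|f\|_\infty}\big((1+\alpha)|K_r|\rho_{\rR_\alpha K_r}(u)^\alpha\big)\,\tfrac{dr}{\|f\|_\infty}.
\]
From \eqref{inclusion2}, $(1+\alpha)|K_r|\rho_{\rR_\alpha K_r}(u)^\alpha\le\big((1+\beta)|K_r|\rho_{\rR_\beta K_r}(u)^\beta\big)^{\alpha/\beta}$ pointwise in $r$; then applying Jensen's inequality (power mean monotonicity with exponent $\alpha/\beta\le 1$) to the probability measure $\tfrac{dr}{\|f\|_\infty}$ on $(0,\|f\|_\infty)$ converts the integral of the $\alpha/\beta$-powers into the $\alpha/\beta$-power of the integral, i.e.\ into $\big((1+\beta)\tfrac{\|f\|_1}{\|f\|_\infty}\rho_{\rR_\beta f}(u)^\beta\big)^{\alpha/\beta}$. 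Raising to the power $1/\alpha$ gives the radial-function inequality equivalent to the desired inclusion, and the $\alpha$ or $\beta$ equal to $0$ cases follow by taking logarithms and using \eqref{def-R0}.

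Next, for the leftmost inclusion $\|f\|_\infty\Pi^* f\subset\big((1+\alpha)\tfrac{\|f\|_1}{\|f\|_\infty}\big)^{1/\alpha}\Raf$, I would pass to the limit $\alpha\to -1^+$ in the middle-type inequality (valid for $-1<\alpha<\beta$ by the same Proposition and \eqref{inclusion2}, now allowing negative $\alpha$ and using the $\mu_f$-average form), invoking Lemma \ref{R-1}: $(1+\alpha)\|f\|_1\rho_{\Raf}(u)^\alpha\to\rho_{\Pi^*f}(u)^{-1}$, so that $\big((1+\alpha)\tfrac{\|f\|_1}{\|f\|_\infty}\big)^{1/\alpha}\rho_{\Raf}(u)\to\tfrac{1}{\|f\|_\infty}\rho_{\Pi^*f}(u)$, which is exactly the claimed relation. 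For the rightmost inclusion $\big((1+\beta)\tfrac{\|f\|_1}{\|f\|_\infty}\big)^{1/\beta}\rR_\beta f\subset\rR_\infty f$ I would likewise send the upper parameter to $\infty$, using Lemma \ref{R_infty} (which identifies $\lim_{\beta\to\infty}\rho_{\rR_\beta f}(u)=\rho_{\rR_\infty f}(u)=\rho_{D\,\supp f}(u)$) together with the pointwise bound $(1+\beta)|K_r|\rho_{\rR_\beta K_r}(u)^\beta\le\rho_{DK_r}(u)^\beta$ from \eqref{inclusion2}; taking $\tfrac{1}{\|f\|_\infty}$-averages and $\beta$-th roots and then letting $\beta\to\infty$ yields $\le\sup_{r}\rho_{DK_r}(u)=\rho_{D\,\supp f}(u)$.

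The main obstacle is the endpoint analysis: one must justify interchanging the limits $\alpha\to-1^+$ and $\beta\to\infty$ with the $\mu_f$-integration over superlevel sets, and confirm that $\mu_f$ is a probability measure on $(0,\|f\|_\infty)$ (indeed $\int_0^{\|f\|_\infty}|\{f\ge r\}|\,dr=\|f\|_1$ by the layer-cake formula, so $\mu_f(0,\|f\|_\infty)=1$), as well as handle the fact that $|K_r|\to 0$ as $r\to\|f\|_\infty$ so the geometric inclusions degenerate there. These are controlled by monotone/dominated convergence since all integrands are monotone in the relevant parameter and, by log-concavity of $f$ and of $g(tu)=\int\min\{f(x),f(x+tu)\}dx$ (Lemma \ref{abg2.1}), the radial functions involved are finite a.e.; the compact-support hypothesis is what makes $\rR_\infty f$ a genuine (bounded) body rather than all of $\rn$. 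I expect no difficulty in the core Jensen-type step; everything reduces cleanly to \eqref{inclusion1}, \eqref{inclusion2} and the two limit lemmas.
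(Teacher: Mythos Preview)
Your approach is essentially the paper's: express $\big((1+\alpha)\tfrac{\|f\|_1}{\|f\|_\infty}\big)^{1/\alpha}\rho_{\Raf}(u)$ via Proposition~\ref{formula with level sets}, apply the geometric inclusion \eqref{inclusion2} to each superlevel set, use power-mean monotonicity (Jensen) against the probability measure $dr/\|f\|_\infty$ for the middle inclusion, and then pass to the limits $\alpha\to-1^+$ and $\beta\to\infty$ via Lemmas~\ref{R-1} and~\ref{R_infty} for the two endpoints (noting, as the paper does first, that the right inclusion is trivial when $f$ is not compactly supported since then $\rR_\infty f=\rn$). One minor slip to correct: since $\|f\|_\infty^{-1/\alpha}\to\|f\|_\infty$ as $\alpha\to-1^+$, the left-endpoint limit of $\big((1+\alpha)\tfrac{\|f\|_1}{\|f\|_\infty}\big)^{1/\alpha}\rho_{\Raf}(u)$ is $\|f\|_\infty\,\rho_{\Pi^*f}(u)$, not $\tfrac{1}{\|f\|_\infty}\rho_{\Pi^*f}(u)$---which is exactly what the inclusion $\|f\|_\infty\Pi^*f\subset\cdots$ requires.
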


\begin{proof}
    If $f$ is not compactly supported, the right-most inclusion is trivial since $\rR_{\infty} f=\rn$. We now assume that $f$ has compact support.
    
    By the definition of $\Raf$ in \eqref{rdm-superlevel set},
    \begin{equation*}
        \begin{aligned}
            \Big((1+\alpha)\frac{\|f\|_1}{\|f\|_{\infty}}\Big)^{1/\alpha}\rho_{\Raf}(u)&=\Big(\frac{1}{\|f\|_{\infty}}\int_0^{\|f\|_{\infty}}(1+\alpha)|\{f\ge t\}|\rho_{\Ra\{f\ge t\}}(u)^{\alpha}dt\Big)^{1/\alpha}\\
            &=\Big(\frac{1}{\|f\|_{\infty}}\int_0^{\|f\|_{\infty}}\rho_{l_{\alpha,t}\Ra \{f\ge t\}}(u)^{\alpha}dt\Big)^{1/\alpha},
        \end{aligned}
    \end{equation*}
    where $l_{\alpha,t}=((1+\alpha)|\{f\ge t\}|)^{1/\alpha}$.
    It follows from \eqref{inclusion2} that
    \[l_{\alpha,t}\rho_{\Ra \{f\ge t\}}(u)\leq l_{\beta,t} \rho_{\rR_{\beta} \{f\ge t\}}(u),\]
    for $-1<\alpha<\beta$.

    Jensen's inequality then implies that
    \begin{equation*}
        \begin{aligned}
            \Big((1+\alpha)\frac{\|f\|_1}{\|f\|_{\infty}}\Big)^{1/\alpha}\rho_{\Raf}(u)&\leq \Big(\frac{1}{\|f\|_{\infty}}\int_0^{\|f\|_{\infty}}\rho_{l_{\beta,t}\rR_{\beta} \{f\ge t\}}(u)^{\alpha}dt\Big)^{1/\alpha}\\
            &\leq \Big(\frac{1}{\|f\|_{\infty}}\int_0^{\|f\|_{\infty}}\rho_{l_{\beta,t}\rR_{\beta} \{f\ge t\}}(u)^{\beta}dt\Big)^{1/\beta}\\
            &=\Big((1+\beta)\frac{\|f\|_1}{\|f\|_{\infty}}\Big)^{1/\beta}\rho_{\rR_{\beta} f}(u),
        \end{aligned}
    \end{equation*}
    which gives
    \[\Big((1+\alpha)\frac{\|f\|_1}{\|f\|_{\infty}}\Big)^{1/\alpha} \Raf\subset \Big((1+\beta)\frac{\|f\|_1}{\|f\|_{\infty}}\Big)^{1/\beta}\rR_{\beta} f.\]

    Letting $\beta\rightarrow\infty$, together with $\lim_{\beta\rightarrow\infty}(1+\beta)^{1/\beta}=1$, we obtain the right-most inclusion. Letting $\alpha\rightarrow -1^+$, the left-most inclusion follows from Lemma \ref{R-1}.
\end{proof}

\section{Reverse inclusion for radial mean bodies}\label{Reversed inequalities}

%\begin{equation*}
%    \begin{aligned}
%        \rho_{\Ra K}(u)^{\alpha}&=\frac{1}{|K|}\int_K\Big(\alpha\int_0^{\rho_{K-x}(u)}t^{\alpha-1}dt\Big)dx\\&
%        =\frac{\alpha}{|K|}\int_0^{\rho_{DK}(u)}t^{\alpha-1}\Big(\int_{K\cap(K+tu)}dx\Big)dt=\frac{\alpha}{|K|}\int_0^{\rho_{DK}(u)}t^{\alpha-1}g_K(tu)dt.
%    \end{aligned}
%\end{equation*}

In the previous section, we recalled the monotonicity relations \eqref{inclusion1} and \eqref{inclusion2} for radial mean bodies and extended them to their functional analogues. In this section, we turn to the reverse inclusion, which is considerably more delicate. We begin by stating the following theorem established by Gardner and Zhang \cite{GZ}.

\begin{thm}\cite[Theorem 5.5]{GZ}\label{GZ-reverse}
    Let $K$ be a convex body in $\rn$. If $-1<\alpha<\beta$, then
    \[{\rm D}K\subset c_{n,\beta}\rR_{\beta} K\subset c_{n,\alpha} \Ra K\subset n|K|\Pi^* K.\]
    In each inclusion equality holds if and only if $K$ is a simplex.
\end{thm}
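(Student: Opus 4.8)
The plan is to derive the whole chain from one tool — Berwald's inequality — applied, for each fixed $u\in\sn$, to the concave function $g_u\colon K\to[0,\infty)$, $g_u(x)=\rho_{K-x}(u)$. Two inputs are needed. First, $g_u$ is concave on $K$: if $x,y\in K$ then $x+g_u(x)u$ and $y+g_u(y)u$ lie in $K$, so convexity of $K$ gives $(\lambda x+(1-\lambda)y)+\bigl(\lambda g_u(x)+(1-\lambda)g_u(y)\bigr)u\in K$, i.e.\ $g_u(\lambda x+(1-\lambda)y)\ge\lambda g_u(x)+(1-\lambda)g_u(y)$. Second, Berwald's inequality: for a concave $\phi\ge0$ on a convex body $C\subset\mathbb{R}^m$, the quantity $\bigl(\binom{m+p}{m}\frac1{|C|}\int_C\phi^p\,dx\bigr)^{1/p}$ is non-increasing in $p$ on $(-1,\infty)$ (with the values at $p=0$ and $p=\infty$ taken by continuity), and for $p_1<p_2$ equality forces the subgraph $\{(x,t):x\in C,\,0\le t\le\phi(x)\}$ to be a cone.

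Applying this with $C=K$, $m=n$, $\phi=g_u$, and noting that $\frac1{|K|}\int_K g_u^p\,dx=\rho_{\rR_pK}(u)^p$ while the elementary identity $\binom{n+p}{n}=\bigl(nB(p+1,n)\bigr)^{-1}$ gives $\binom{n+p}{n}^{1/p}=c_{n,p}$, monotonicity in $p$ reads, for every $u\in\sn$ and $-1<\alpha<\beta$,
\[c_{n,\beta}\,\rho_{\rR_\beta K}(u)\ \le\ c_{n,\alpha}\,\rho_{\Ra K}(u),\]
which is exactly the middle inclusion $c_{n,\beta}\rR_\beta K\subset c_{n,\alpha}\Ra K$. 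The two outer inclusions are the endpoints of the same monotone family. As $\beta\to\infty$, $\binom{n+\beta}{n}^{1/\beta}\to1$ and $\rho_{\rR_\beta K}(u)\to\max_{x\in K}\rho_{K-x}(u)=\rho_{{\rm D}K}(u)$, so $\rho_{{\rm D}K}(u)\le c_{n,\beta}\rho_{\rR_\beta K}(u)$. As $\alpha\to-1^{+}$, I would use the chord decomposition $\rho_{\Ra K}(u)^\alpha=\frac1{(\alpha+1)|K|}\int_{K|\ou}|K\cap(x'+\mathbb{R}u)|^{\alpha+1}\,dx'$ recalled in Section~2 together with $\binom{n+\alpha}{n}\sim(\alpha+1)/n$ to get $\bigl(c_{n,\alpha}\rho_{\Ra K}(u)\bigr)^{\alpha}\to\operatorname{vol}_{n-1}(K|\ou)/(n|K|)$; since $1/\alpha\to-1$ and $\rho_{\Pi^*K}(u)=1/\operatorname{vol}_{n-1}(K|\ou)$, this gives $c_{n,\alpha}\rho_{\Ra K}(u)\to n|K|\rho_{\Pi^*K}(u)$, and monotonicity then forces $c_{n,\alpha}\rho_{\Ra K}(u)\le n|K|\rho_{\Pi^*K}(u)$, i.e.\ $c_{n,\alpha}\Ra K\subset n|K|\Pi^*K$.

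For the equality cases, the key observation is that all three inclusions share a single characterization: equality of sets in any one of them forces, for every $u\in\sn$, equality in the relevant instance of Berwald's inequality, hence that the subgraph of $g_u$ over $K$ is a cone — equivalently, that every $K\cap(K-tu)=\{g_u\ge t\}$ is a homothet of $K$. A simplex has this property, since pushing in any subcollection of its $n+1$ facets yields a simplex with the same facet normals, and all simplices with a prescribed set of facet normals are homothetic; hence a simplex gives equality in every direction $u$, and therefore in all three inclusions. For the converse, the property above makes equality hold in particular in ${\rm D}K\subset c_{n,n}\rR_n K$; taking volumes and using $|\rR_n K|=|K|$ (an easy computation from $\int_{\sn}\rho_L(u)^n\,du=n|L|$), this is precisely equality in the Rogers–Shephard inequality $|{\rm D}K|\le\binom{2n}{n}|K|$, which holds only for simplices (alternatively one may invoke Zhang's inequality via the $\alpha=n$ endpoint of the last inclusion). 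The step I expect to be the main obstacle is making the two endpoint limits rigorous — in particular the $\alpha\to-1^{+}$ asymptotics and the verification that Berwald's equality characterization persists in the limiting exponents $p=\infty$ and $p\to-1^{+}$ — since everything else reduces to a direct application of Berwald's inequality together with standard equality cases.
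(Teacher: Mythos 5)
The paper does not prove this statement; it simply cites it as Theorem~5.5 of Gardner and Zhang. Your proposal essentially reconstructs the original Gardner--Zhang argument: Berwald's inequality (their Lemma~5.4) applied direction-by-direction to the concave function $x\mapsto\rho_{K-x}(u)$ on $K$, with the endpoints $\beta\to\infty$ and $\alpha\to-1^{+}$ giving ${\rm D}K$ and $n|K|\Pi^*K$ via the asymptotics you describe, and the equality case reduced (through the cone characterization of Berwald equality, equivalently the fact that all sets $K\cap(K-tu)$ are homothets of $K$) to the simplex characterization, for which GZ use their Lemma~3.4 rather than your detour through Rogers--Shephard, though both are valid. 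The identities $\binom{n+p}{n}^{1/p}=c_{n,p}$ and $|\rR_nK|=|K|$, and the limiting computations, all check out; the only loose ends you yourself flag -- integrability of $g_u^\alpha$ near the boundary for $\alpha\in(-1,0)$ and transferring the Berwald equality case to the two limiting exponents -- are exactly the technical points GZ handle carefully, so the proposal is sound and matches the cited source's approach.
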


When $\beta=n$, the left-most inclusion implies the Rogers-Shephard inequality:
\[|{\rm D} K|\leq \binom{2n}{n} |K|.\]
When $\alpha=n$, the right-most inclusion yields Zhang's inequality:
\[n^{-n}\binom{2n}{n}=|\triangle|^{n-1}|\Pi^*\triangle|\leq |K|^{n-1}|\Pi^* K|,\]
where $\triangle$ denotes a simplex in $\rn$.

To extend Theorem \ref{GZ-reverse} to the functional setting, we rely on the following result from \cite{HL3}.
It generalizes the classical result of Marshall, Olkin and Proschan \cite{MOP}, whose proof was later simplified by Milman and Pajor \cite{MP}. 

{\begin{lem}\cite[Lemma 17]{HL3}\label{lemma_aux}
Let $\omega: [0,\infty)\to [0,\infty)$    be decreasing with
\begin{align*}
    0<\int_0^\infty r^{\alpha -1}\omega(r) d r<\infty
\end{align*}
for $\alpha >0$ and
\[0<\int_0^{\infty}r^{\alpha-1}(\omega(0)-\omega(r))dr<\infty\]
for $\alpha\in(-1,0)$. If $\varphi: [0,\infty)\to[0,\infty)$ is non-zero, with $\varphi(0)=0$, and such that $r\mapsto \varphi(r)$ and $r\mapsto {\varphi(r)}/{r}$ are increasing on $(0,\infty)$, then
        \begin{equation*}
            \zeta(\alpha)=\left\{
                \begin{aligned}
                &\left(\frac{\int_0^{\infty}r^{\alpha-1}\omega(\varphi(r))dr}{\int_0^{\infty}r^{\alpha-1}\omega(r)dr}\right)^{1/\alpha}&&\text{for}~~\alpha>0,\\
                &\exp\left(\int_0^{\infty}\frac{\omega(\varphi(r))-\omega(r)}{r\omega(0)}dr\right)&&\text{for}~~\alpha=0,\\
                &\left(\frac{\int_0^{\infty}r^{\alpha-1}(\omega(\varphi(r))-\omega(0))dr}{\int_0^{\alpha-1}(\omega(r)-\omega(0))dr}\right)^{1/\alpha}&&\text{for}~~-1<\alpha<0,
                \end{aligned}\right.
        \end{equation*}
is a continuous, decreasing function of $\alpha$ on $(-1, \infty)$. Moreover, $\zeta$ is constant on $(-1, \infty)$ if $\varphi(r)=\lambda r$ on $[0, \infty)$ for some $\lambda >0$.
\end{lem}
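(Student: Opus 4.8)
The plan is to reduce the whole statement to one weighted power-mean inequality and then close it with a stochastic-dominance-plus-Jensen argument, in the spirit of Milman--Pajor's treatment of the Marshall--Olkin--Proschan lemma. As a first step I would approximate: replace $\omega$ by $C^1$ functions that are strictly decreasing on their support and converge monotonically to $\omega$, and replace $\varphi$ by strictly increasing $C^1$ bijections $\varphi_k$ of $[0,\infty)$ with $\varphi_k(0)=0$ and $\varphi_k(t)/t$ increasing, $\varphi_k\to\varphi$. Monotone and dominated convergence, together with the stated integrability hypotheses, allow passage to the limit in all three formulas for $\zeta$, so it suffices to prove the lemma when $\omega,\varphi$ are regular in this sense; the hypotheses also make the numerator integrals defining $\zeta(\alpha)$ finite for every $\alpha\in(-1,\infty)$.

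\emph{A clean representation of $\zeta$.} Writing $\psi=\varphi^{-1}$ and $d\mu(s)=-\omega'(s)\,ds\ge 0$, the substitution $s=\varphi(t)$ together with an integration by parts (boundary terms vanish because $\psi(0)=0$ and by the decay forced by the integrability hypotheses; for $-1<\alpha<0$ one works with $\omega(0)-\omega$ in place of $\omega$, which alters the ratio only by a cancelling factor) gives, for all $\alpha\in(-1,0)\cup(0,\infty)$,
\[
\zeta(\alpha)^\alpha=\frac{\displaystyle\int_0^\infty\psi(s)^\alpha\,d\mu(s)}{\displaystyle\int_0^\infty s^\alpha\,d\mu(s)}.
\]
Pushing $\mu$ forward under $\psi$, i.e.\ substituting $r=\psi(s)$ so that $s=\varphi(r)$, and setting $m(r):=\varphi(r)/r$ --- which is increasing \emph{exactly} by hypothesis --- this becomes
\[
\zeta(\alpha)^\alpha=\frac{\displaystyle\int_0^\infty r^\alpha\,d\tilde\mu(r)}{\displaystyle\int_0^\infty m(r)^\alpha\,r^\alpha\,d\tilde\mu(r)}=\frac{1}{\mathbb{E}_{\tilde\mu_\alpha}\!\big[m^\alpha\big]},\qquad d\tilde\mu_\alpha:=\frac{r^\alpha\,d\tilde\mu}{\int_0^\infty r^\alpha\,d\tilde\mu},
\]
a probability measure with finite, positive normalising constant. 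This already yields the last assertion: if $\varphi(t)=\lambda t$ then $m\equiv\lambda$, hence $\zeta(\alpha)\equiv\lambda^{-1}$ is constant.

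\emph{Monotonicity.} Fix $-1<\alpha<\beta$ with $\alpha,\beta$ of the same sign. The family $\{\tilde\mu_\gamma\}$ has monotone likelihood ratio in $\gamma$ (since $d\tilde\mu_\beta/d\tilde\mu_\alpha$ is proportional to the increasing function $r^{\beta-\alpha}$), so $\tilde\mu_\beta$ stochastically dominates $\tilde\mu_\alpha$; and since $m\ge 0$ is increasing, $m^\gamma$ is increasing for $\gamma>0$ and decreasing for $\gamma<0$. For $0<\alpha<\beta$, combining dominance (against the increasing $m^\beta$) with the power-mean inequality $\mathbb{E}_{\tilde\mu_\alpha}[m^\beta]=\mathbb{E}_{\tilde\mu_\alpha}[(m^\alpha)^{\beta/\alpha}]\ge(\mathbb{E}_{\tilde\mu_\alpha}[m^\alpha])^{\beta/\alpha}$ (convex exponent $\beta/\alpha>1$) gives
\[
\mathbb{E}_{\tilde\mu_\beta}[m^\beta]\ \ge\ \mathbb{E}_{\tilde\mu_\alpha}[m^\beta]\ \ge\ \big(\mathbb{E}_{\tilde\mu_\alpha}[m^\alpha]\big)^{\beta/\alpha},
\]
hence $\mathbb{E}_{\tilde\mu_\beta}[m^\beta]^{1/\beta}\ge\mathbb{E}_{\tilde\mu_\alpha}[m^\alpha]^{1/\alpha}$, i.e.\ $\zeta(\beta)\le\zeta(\alpha)$. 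For $-1<\alpha<\beta<0$ one argues symmetrically, now comparing $\mathbb{E}_{\tilde\mu_\alpha}[m^\alpha]$ downward: $\mathbb{E}_{\tilde\mu_\alpha}[m^\alpha]\ge(\mathbb{E}_{\tilde\mu_\alpha}[m^\beta])^{\alpha/\beta}\ge(\mathbb{E}_{\tilde\mu_\beta}[m^\beta])^{\alpha/\beta}$ (Jensen with the convex exponent $\alpha/\beta=|\alpha|/|\beta|>1$, then dominance against the decreasing $m^\beta$), and raising to the order-preserving power $-1/\alpha>0$ again yields $\zeta(\beta)\le\zeta(\alpha)$.

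\emph{Continuity and the main obstacle.} Away from $0$, $\alpha\mapsto\int_0^\infty\psi^\alpha\,d\mu$ and $\alpha\mapsto\int_0^\infty s^\alpha\,d\mu$ are continuous and positive by dominated convergence, so $\zeta$ is continuous there. At $\alpha=0$ the two integrals coincide (both equal $\mu((0,\infty))=\omega(0)<\infty$), so $\log\zeta(\alpha)=\tfrac1\alpha\log\!\big(\int\psi^\alpha d\mu/\int s^\alpha d\mu\big)$ has the finite limit $\big(\int_0^\infty\log\psi\,d\mu-\int_0^\infty\log s\,d\mu\big)/\omega(0)$ as $\alpha\to0$, and a change of variables identifies it with $\int_0^\infty\frac{\omega(\varphi(t))-\omega(t)}{t\,\omega(0)}\,dt=\log\zeta(0)$; continuity then glues the monotonicity on $(-1,0)$ and on $(0,\infty)$ across $0$. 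I expect the endpoint bookkeeping to be the main obstacle --- checking that the approximation preserves the integrability hypotheses, that every boundary term in the integration by parts genuinely vanishes (especially the behaviour near $0$ and $\infty$ when $\alpha<0$), and that the limit defining $\zeta(0)$ is finite and matches the two one-sided limits. Once the representation $\zeta(\alpha)^\alpha=1/\mathbb{E}_{\tilde\mu_\alpha}[m^\alpha]$ is in place, the monotonicity itself is a short argument.
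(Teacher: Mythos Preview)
The paper does not prove this lemma; it is quoted verbatim from \cite{HL3}, and only the companion equality characterization (Lemma~\ref{lemma_aux_iff}) is argued in the paper itself. So there is no in-paper proof to compare against directly.

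Your approach is correct and is the Milman--Pajor argument the paper alludes to. The representation $\zeta(\alpha)^\alpha=\int_0^\infty\psi^\alpha\,d\mu\big/\int_0^\infty s^\alpha\,d\mu$ is right for both signs of $\alpha$ after the integration by parts you indicate, and the monotone-likelihood-ratio plus power-mean step gives the monotonicity cleanly; your handling of the negative range is also correct. It is worth noting that the paper's own proof of Lemma~\ref{lemma_aux_iff} runs the same mechanism in integral rather than probabilistic language: it writes $t^{\beta-1}=(\beta-\alpha)\,t^{\alpha-1}\int_0^t r^{\beta-\alpha-1}\,dr$, applies Fubini, and uses the single-crossing property of $\omega(\lambda t)-\omega(\varphi(t))$ (coming from the monotonicity of $\varphi(t)/t$ and of $\omega$) to show that the tail integral $\int_r^\infty(\omega(\lambda t)-\omega(\varphi(t)))\,t^{\alpha-1}\,dt$ is nonnegative. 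That is exactly your stochastic-dominance step, unpacked by hand; your measure-theoretic packaging is tidier and makes the role of the hypothesis on $\varphi(t)/t$ more transparent.

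The technical caveats you flag --- the approximation to regular $\omega,\varphi$, the vanishing of boundary terms near $0$ when $\alpha<0$, and matching the one-sided limits to the stated $\zeta(0)$ formula --- are genuine and do require the care you describe, but none of them conceals a missing idea.
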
}

In particular, taking $\omega(r)=e^{-r}$ and $\omega(r)=(1-sr)_+^{1/s}$ with $s>0$ yields the classical analytic continuation formulas for the Gamma and Beta functions,
\begin{equation*}
    \Gamma(\alpha)=\left\{
                \begin{aligned}
                &\int_0^{\infty}r^{\alpha-1}e^{-r}dr&&\text{for}~~\alpha>0,\\
                &\int_0^{\infty}r^{\alpha-1}(e^{-r}-1)dr&&\text{for}~~-1<\alpha<0,\\
                \end{aligned}\right.
\end{equation*}
and
\begin{equation}\label{beta fcn}
            s^{-\alpha}B(\alpha,1+\frac{1}{s})=\left\{
                \begin{aligned}
                &\int_0^{\infty}r^{\alpha-1}(1-sr)_+^{1/s}dr&&\text{for}~~\alpha>0,\\
                &\int_0^{\infty}r^{\alpha-1}((1-sr)_+^{1/s}-1)dr&&\text{for}~~-1<\alpha<0.\\
                \end{aligned}\right.
\end{equation}

Under the assumptions of Lemma \ref{lemma_aux} with strictly decreasing $\omega(r)$, if $\zeta(\alpha)$ is constant on $(-1,\infty)$, say $\zeta(\alpha)=\lambda$, then $\varphi(r)=\lambda r$ for all $r\ge 0$. The result follows directly from Milman and Pajor \cite{MP}, but we provide a proof for completeness.

\begin{lem}\label{lemma_aux_iff}
Let $\omega$, $\varphi$, and $\zeta$ be as in Lemma \ref{lemma_aux}. Suppose $\omega$ is strictly decreasing and $-1<\alpha<\beta<\infty$. Then $\zeta(\alpha)=\zeta(\beta)$  implies $\varphi(r)=\vartheta r$ with $\vartheta=\zeta(\alpha)^{-1}$.
\end{lem}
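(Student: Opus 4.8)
The plan is to exploit analyticity of the Mellin integrals appearing in the definition of $\zeta$. Since $\zeta$ is decreasing by Lemma~\ref{lemma_aux} and $\zeta(\alpha)=\zeta(\beta)$, the function $\zeta$ is constant on all of $[\alpha,\beta]$; write $\zeta\equiv 1/\lambda$ there, with $\lambda=\zeta(\alpha)^{-1}\in(0,\infty)$ (finiteness holds by Lemma~\ref{lemma_aux}, positivity because the two integrals defining $\zeta$ are positive). Because $\alpha<\beta$, fix a nondegenerate closed subinterval $[a,b]\subset[\alpha,\beta]$ lying either in $(0,\beta)$ or in $(-1,0)$, on which a single closed form for $\zeta$ applies. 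We carry out the argument for $[a,b]\subset(0,\beta)$; the case $[a,b]\subset(-1,0)$ is the same after subtracting $\omega(0)$ inside both integrals below, which does not affect the difference that ultimately matters (note $\omega(0)<\infty$, as otherwise $\zeta(\gamma)$ for $\gamma<0$ would be undefined). Set
\[
M(\gamma)=\int_0^{\infty}t^{\gamma-1}\omega(\varphi(t))\,dt,\qquad
N(\gamma)=\int_0^{\infty}t^{\gamma-1}\omega(t)\,dt,
\]
so that $\zeta(\gamma)^{\gamma}=M(\gamma)/N(\gamma)$ on $[a,b]$ and, after the substitution $s=\lambda t$, $\lambda^{-\gamma}N(\gamma)=\int_0^\infty t^{\gamma-1}\omega(\lambda t)\,dt$. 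Constancy of $\zeta$ then gives $M(\gamma)=\lambda^{-\gamma}N(\gamma)$ for every real $\gamma\in[a,b]$.

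Next I would check that $M$ and $\gamma\mapsto\lambda^{-\gamma}N(\gamma)$ extend to holomorphic functions on the strip $S=\{\gamma\in\mathbb{C}:0<\operatorname{Re}\gamma<\beta\}$. Both $\omega\circ\varphi$ and $\omega$ are nonnegative and bounded by $\omega(0)<\infty$ near $t=0$, so the integrals over $(0,1]$ converge for every $\operatorname{Re}\gamma>0$; over $[1,\infty)$ convergence for $\operatorname{Re}\gamma\le\beta$ follows from $N(\beta)<\infty$ (a hypothesis of Lemma~\ref{lemma_aux}) and $M(\beta)<\infty$ (since $\zeta(\beta)=\zeta(\alpha)<\infty$). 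Hence both integrals converge absolutely for $\operatorname{Re}\gamma\in(0,\beta)$ and depend holomorphically on $\gamma$ there (Morera, or differentiation under the integral). As $M$ and $\lambda^{-\gamma}N$ are holomorphic on the connected open set $S$ and agree on the segment $[a,b]\subset S$, which has a limit point in $S$, the identity theorem yields $M(\gamma)=\lambda^{-\gamma}N(\gamma)$ on all of $S$.

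Now restrict to a vertical line $\operatorname{Re}\gamma=c$ with $c\in(a,b)$: the function $g(t):=\omega(\varphi(t))-\omega(\lambda t)$ satisfies $t^{c-1}g\in L^1(0,\infty)$ and $\int_0^\infty t^{\gamma-1}g(t)\,dt=M(\gamma)-\lambda^{-\gamma}N(\gamma)=0$ for all $\gamma$ with $\operatorname{Re}\gamma=c$. By the uniqueness theorem for the Mellin transform---equivalently, after $t=e^x$, uniqueness of the Fourier transform for the $L^1$ function $x\mapsto e^{cx}g(e^x)$---we conclude $g=0$ a.e., i.e.\ $\omega(\varphi(t))=\omega(\lambda t)$ for a.e.\ $t>0$. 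At this point strict monotonicity of $\omega$ is used: $\omega$ is injective, so $\varphi(t)=\lambda t$ for a.e.\ $t>0$, and since $\varphi$ is monotone it cannot differ from the continuous function $t\mapsto\lambda t$ on a null set only; hence $\varphi(t)=\lambda t$ for all $t\ge0$. Plugging this back into the formula for $\zeta$ and scaling $s=\lambda t$ shows $\zeta\equiv 1/\lambda$, so $\lambda=\zeta(\alpha)^{-1}$, as claimed.

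The step I expect to require the most care is the verification that $M$ and $N$ are holomorphic on an honest complex strip rather than merely on the real segment $[\alpha,\beta]$, where integrability can be borderline at $\gamma=\beta$; passing to the interior subinterval $[a,b]$ with $b<\beta$ circumvents this and also secures a single closed form for $\zeta$ together with a clean scaling $s=\lambda t$. The remaining points---convergence of the Mellin integrals on the relevant strip, differentiation under the integral, Mellin/Fourier uniqueness, and promoting an a.e.\ identity between monotone functions to an everywhere identity---are routine.
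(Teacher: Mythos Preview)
Your argument is correct and takes a genuinely different route from the paper. The paper proceeds by an elementary real-variable argument in the spirit of Milman--Pajor: setting $\Psi(r)=\int_r^\infty(\omega(\lambda t)-\omega(\varphi(t)))\,dt$, it uses the monotonicity of $\varphi(t)/t$ to show that $\Psi'$ changes sign at most once, hence $\Psi\ge 0$; then an integration-by-parts identity expresses $\int_0^\infty t^{\beta-1}(\omega(\lambda t)-\omega(\varphi(t)))\,dt$ as a positive multiple of $\int_0^\infty r^{\beta-\alpha-1}\Psi(r)\,dr$, so that $\zeta(\alpha)=\zeta(\beta)$ forces $\Psi\equiv 0$ and thus $\omega(\varphi(t))=\omega(\lambda t)$ pointwise. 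You instead use that $\zeta$ is constant on a real subinterval, extend the Mellin integrals holomorphically to a vertical strip, invoke the identity theorem, and then Mellin/Fourier uniqueness on a vertical line to obtain $\omega(\varphi(t))=\omega(\lambda t)$ a.e.; both arguments finish with strict monotonicity of $\omega$. Your approach is cleaner in that it never re-examines the proof of Lemma~\ref{lemma_aux} and does not explicitly invoke the monotonicity of $\varphi(t)/t$ (only of $\varphi$, to upgrade from a.e.\ to everywhere), at the cost of importing complex-analytic tools; the paper's approach is entirely elementary, self-contained, and makes transparent exactly which structural hypothesis on $\varphi$ drives the rigidity.
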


\begin{proof}
    Let $\vartheta =\zeta(\alpha)^{-1}$. We first consider $0<\alpha<\beta<\infty$. For $\alpha>0$, we then have
    \[\int_0^{\infty}\omega(\vartheta r)r^{\alpha-1}dr=\int_0^{\infty}\omega(\varphi(r))r^{\alpha-1}dr.\]
    We set $\Psi(t)=\int_t^{\infty}r^{\alpha-1}(\omega(\vartheta r)-\omega(\varphi(r)))dr$, where $\Psi(0)=\Psi(\infty)=0$. Since $\varphi(r)/r$ is increasing while $\omega$ is decreasing, we have
    \[\omega(\varphi(r))> \omega(\vartheta r),~~\text{if}~~{\varphi(r)}/{r}< \vartheta,\]
    and
    \[\omega(\varphi(r))\leq \omega(\vartheta r),~~\text{if}~~{\varphi(r)}/{r}\ge \vartheta.\]
    This implies the sign of $\omega(\vartheta t)-\omega(\varphi(t))=-\Psi^{\prime}(t)$. Therefore, $\Psi$ is first increasing and then decreasing, which yields that $\Psi(t)\ge 0$ for all $t\ge 0$. That is
    \[\int_t^{\infty}\omega(\vartheta r)r^{\alpha-1}dr\ge \int_t^{\infty}\omega(\varphi(r))r^{\alpha-1}dr.\]

    For $\beta>\alpha>0$, we have
    \begin{equation*}
        \begin{aligned}
            \int_0^{\infty}\omega(\varphi(r))r^{\beta-1}dr&=(\beta-\alpha)\int_0^{\infty}t^{\beta-\alpha-1}\Big(\int_t^{\infty}\omega(\varphi(r))r^{\alpha-1}dr\Big)dt\\
            &\leq (\beta-\alpha)\int_0^{\infty}t^{\beta-\alpha-1}\Big(\int_t^{\infty}\omega(\vartheta  r)r^{\alpha-1}dr\Big)dt\\
            &=\int_0^{\infty}\omega(\vartheta r)r^{\beta-1}dr=\frac{1}{\vartheta^{\beta}}\int_0^{\infty}\omega(r)r^{\beta-1}dr,
        \end{aligned}
    \end{equation*}
    which implies $\zeta(\alpha)\ge \zeta(\beta)$. If $\zeta(\alpha)=\zeta(\beta)=\vartheta^{-1}$, equality holds in the inequality above, which yields that $\Psi(t)=0$ for all $t\ge 0$. Then we have
    \[\Psi^{\prime}(t)=t^{\alpha-1}(\omega(\varphi(t))-\omega(\vartheta t))=0.\]
    Since $\omega$ is strictly decreasing, we obtain $\varphi(t)=\vartheta t$.

    For $-1<\alpha<\beta<0$, the proof is the same. Then we consider $-1<\alpha<0<\beta<\infty$. Since $\zeta$ is decreasing, we have
    \[\zeta(\beta)\leq \zeta(0)\leq \zeta(\alpha).\]
    As assumed $\zeta(\alpha)=\zeta(\beta)=\lambda^{-1}$, we obtain that $\zeta$ is a constant on $(0,\beta)$, by the first case, we have $\varphi(t)=\vartheta t$.
\end{proof}

\subsection{Results for log-concave functions}\label{Reversed inequalities-log}\hspace{\fill}

In this subsection, we focus on the case of log-concave functions. Note that there are two natural ways to embed a convex body into log-concave function class:
\[K\mapsto \chi_K,\qquad K\mapsto e^{-\|\cdot\|_K}.\]
Using the second embedding, Langharst, Mar\'in Sola and Ulivelli \cite{LSU} proved a reversed inclusion for $\Raf$. For completeness, we give a proof in the present framework. The following result from \cite{ABG} serves as a main tool in the equality characterization.

\begin{lem}\cite[Lemma 4.2]{ABG}\label{iff-abg}
    Let $f\in L^1(\rn)$ be non-zero and log-concave such that $f(o)=\|f\|_{\infty}$. Let
    \[g(x)=\int_{\rn}\min\{f(y), f(y+x)\}dy.\]
    Then for every $x\in\rn$ and $\lambda \in (0,1)$, $g(\lambda x)=g(o)^{1-\lambda}g(x)^{\lambda}$ if and only if $f(x)=\|f\|_{\infty}e^{-\|x\|_{\triangle}}$ with $\triangle$ a simplex containing the origin.
\end{lem}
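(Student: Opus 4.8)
Both implications revolve around the function $g(x)=\int_{\rn}\min\{f(y),f(y+x)\}\,dy$. Since $f(o)=\|f\|_\infty$, the layer--cake formula gives $g(x)=\int_0^{\|f\|_\infty}|K_r\cap(K_r+x)|\,dr$, where $K_r:=\{f\ge r\}$ is a convex body; recall from Lemma \ref{abg2.1} that $g$ is even and log-concave with $g(o)=\|g\|_\infty=\|f\|_1$. Moreover the hypothesis $g(\lambda x)=g(o)^{1-\lambda}g(x)^\lambda$ for all $x\in\rn$, $\lambda\in[0,1]$ is equivalent to saying that $\varphi:=\log(g(o)/g)$ is positively $1$-homogeneous: it directly gives $\varphi(\lambda x)=\lambda\varphi(x)$ for $\lambda\in[0,1]$, and substituting $x\mapsto\mu x$, $\lambda=1/\mu$ extends this to $\mu\ge 1$. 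As $\varphi$ is nonnegative, convex (since $g$ is log-concave) and even, it is the gauge of an origin-symmetric convex body $L$ with $o\in\interior L$ (boundedness of $L$ using $g\in L^1$). Thus the hypothesis is equivalent to $g(x)=g(o)\,e^{-\|x\|_L}$, and the lemma reduces to the equivalence: $g$ has this exponential form $\iff$ $f(x)=\|f\|_\infty\,e^{-\|x\|_\triangle}$ for a simplex $\triangle$ with $o\in\interior\triangle$.

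\textbf{The ``if'' direction.} If $f=\|f\|_\infty\,e^{-\|\cdot\|_\triangle}$ then $K_r=\log(\|f\|_\infty/r)\,\triangle$, so the substitution $s=\log(\|f\|_\infty/r)$ turns the layer--cake representation into $g(x)=\|f\|_\infty\int_0^\infty s^n\,g_\triangle(x/s)\,e^{-s}\,ds$, where $g_\triangle(z)=|\triangle\cap(\triangle+z)|$. The plan is to first establish the elementary identity $g_\triangle(z)=|\triangle|\,(1-\|z\|_{{\rm D}\triangle})_{+}^{n}$, by mapping $\triangle$ affinely to the standard simplex and computing the intersection volume explicitly; substituting this in and changing variables $\rho=s-\|x\|_{{\rm D}\triangle}$ collapses the integral to $g(x)=n!\,|\triangle|\,\|f\|_\infty\,e^{-\|x\|_{{\rm D}\triangle}}=g(o)\,e^{-\|x\|_{{\rm D}\triangle}}$. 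Hence $L={\rm D}\triangle$, and $g(\lambda x)=g(o)^{1-\lambda}g(x)^\lambda$ follows from $1$-homogeneity of $\|\cdot\|_{{\rm D}\triangle}$.

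\textbf{The ``only if'' direction.} Assume $g(x)=g(o)\,e^{-\|x\|_L}$. First I would recognise $g(\lambda x)\ge g(o)^{1-\lambda}g(x)^\lambda$ as an instance of Pr\'ekopa-Leindler: fixing $x$, for every decomposition $(1-\lambda)y_1+\lambda y_2=y$, log-concavity of $f$ gives $\min\{f(y),f(y+\lambda x)\}\ge f(y_1)^{1-\lambda}\,\min\{f(y_2),f(y_2+x)\}^{\lambda}$, and integrating (using $\int f=\|f\|_1=g(o)$) reproduces the inequality. Its equality characterization --- Theorem \ref{BBL-eq} with $s=0$, applied after truncating $f$ to compact support and passing to the limit, with $g(o)=\|f\|_1$ fixing the multiplicative constant --- then forces $\min\{f(\cdot),f(\cdot+x)\}=e^{-\|x\|_L}f(\cdot-b_x)$ a.e.\ for some $b_x\in\rn$; comparing superlevel sets this reads $K_t\cap(K_t-x)=b_x+K_{te^{\|x\|_L}}$ for all $t>0$ and $x\in\rn$. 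Taking volumes, $g_{K_t}(x)=|K_{te^{\|x\|_L}}|$ is radial with respect to $L$, so (as $\supp g_{K_t}={\rm D}K_t$) every ${\rm D}K_t$ is a dilate of $L$, and the set identity expresses a strong self-similarity of $K_t$ under intersection with its translates. Combining this with $\rho_{\rR_\alpha f}(u)=\Gamma(\alpha+1)^{1/\alpha}\rho_L(u)$ for all $\alpha>0$ (immediate from the definition of $\rR_\alpha f$, the identity $\rho_{\La f}(u)^\alpha=\int_0^\infty t^{\alpha-1}g(tu)\,dt$, and $g(o)=\|f\|_1$), with Proposition \ref{formula with level sets}, and with the equality case of the Gardner-Zhang inclusions (Theorem \ref{GZ-reverse}, which holds precisely for simplices), one concludes that all the $K_t$ are homothets of a single simplex $\triangle$ with $o\in\interior\triangle$ and that $K_t=\log(\|f\|_\infty/t)\,\triangle$; this is exactly $f(x)=\|f\|_\infty\,e^{-\|x\|_\triangle}$, the centering at $o$ being forced by $f(o)=\|f\|_\infty$.

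\textbf{Main obstacle.} The crux is the last step of the ``only if'' direction: upgrading the covariogram and self-similarity rigidity of the superlevel sets to the statements that they are homothetic simplices and that the level enters exponentially. Two technical hurdles deserve care: the Pr\'ekopa-Leindler equality case is available here only for compactly supported functions (Theorem \ref{BBL-eq}), whereas the extremizers are never compactly supported, so a truncation-and-limit argument is needed; and $b_x$ depends on $x$ only piecewise-linearly (already visible in dimension one for an asymmetric simplex), so the functional equation cannot be solved by a naive linearization and must instead be exploited level set by level set through the simplex characterization.
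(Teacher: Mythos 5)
The paper does not prove this lemma; it cites it from Alonso-Guti\'errez, Bernu\'es and Gonz\'alez Merino \cite[Lemma~4.2]{ABG}, so there is no ``paper proof'' to compare against. Evaluating your argument on its own terms: your setup is correct, and the ``if'' direction is complete and convincing --- the reduction to $g(x)=g(o)e^{-\|x\|_L}$, the layer-cake substitution, the covariogram formula $g_\triangle(z)=|\triangle|(1-\|z\|_{{\rm D}\triangle})_+^n$, and the final Gamma-integral all check out.

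The ``only if'' direction, however, has a genuine gap in the step where you invoke Theorem~\ref{BBL-eq}. That theorem as stated requires the three functions to be \emph{compactly supported}, whereas the extremizer $f(x)=\|f\|_\infty e^{-\|x\|_\triangle}$ has full support; more importantly, its conclusion involves the factor $c_0^{1/s}$ with $c_0=(|\supp G|/|\supp F|)^{1/n}$, which degenerates as $s\to 0$ unless $c_0=1$. In your application $F=f$ and $G=\min\{f(\cdot),f(\cdot+x)\}$ with $\supp G=\supp F\cap(\supp F-x)\subsetneq\supp F$ for $x\ne o$ whenever $\supp F$ is compact, so after truncation one always has $c_0<1$ and the $s=0$ formula is meaningless. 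Thus neither a direct application nor a ``truncate and pass to the limit'' argument can be made to work with Theorem~\ref{BBL-eq}: an equality characterization does not in general survive a limiting process, and here the characterization itself breaks down before the limit. What you actually need is the equality characterization for Pr\'ekopa--Leindler for \emph{not necessarily compactly supported} integrable log-concave functions (Dubuc's theorem), which is the tool used in [ABG]. Separately, the final synthesis --- from the superlevel-set identity $K_t\cap(K_t-x)=b_x+K_{te^{\|x\|_L}}$ to ``all the $K_t$ are homothets of a single simplex and $K_t=\log(\|f\|_\infty/t)\triangle$'' via Proposition~\ref{formula with level sets} and Theorem~\ref{GZ-reverse} --- is asserted rather than argued; Proposition~\ref{formula with level sets} expresses $\rho_{\rR_\alpha f}$ as an $L^\alpha$-average over level sets, and a weighted average being proportional to $\rho_L$ for all $\alpha$ does not by itself force each individual $K_t$ to be a simplex without further work. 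Both points need to be addressed before the ``only if'' direction is complete.
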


\begin{thm}\label{t4}
    Let $f\in L^1(\rn)$ be a non-zero, log-concave function. Then
    \begin{align}\label{inc}
     \frac{1}{\Gamma(\beta+1)^{1/\beta}}\rR_{\beta} f\subset \frac{1}{\Gamma(\alpha+1)^{1/\alpha}}\Raf\subset 2\|f\|_1 \Pi^* f   
    \end{align}
    for $-1<\alpha<\beta<\infty$. In each inclusion, there is equality { if and only if} {$f(x)=a e^{-\|x-x_0\|_{\triangle}}$ where $a=\|f\|_{\infty}$ and $\triangle$ is a simplex containing $x_0$.}
\end{thm}

\begin{proof}
    By Lemma \ref{abg2.1},
    \[g(ru)=\int_{\rn}\min\{f(x), f(x+ru)\}dx\]
    is an even, log-concave function and attains maximum at $r=0$.

    For fixed $u\in\sn$, we apply Lemma \ref{lemma_aux} to $\omega(r)=g(o)e^{-r}$ and $\varphi(r)=-\log(g(ru)/g(o))$. We observe that
    \[\zeta(\alpha)=\bigg(\frac{\int_0^{\infty}r^{\alpha-1}g(ru)dr}{\int_0^{\infty}r^{\alpha-1}g(o)e^{-r}dr}\bigg)^{1/\alpha}=\frac{\rho_{\Raf}(u)}{\Gamma(\alpha+1)^{1/\alpha}}\]
    for $\alpha>0$.

    For $\alpha\in(-1,0)$, note that
    \begin{equation}\label{cal1}
        \begin{aligned}
            g(o)-g(ru)&=\|f\|_1-\int_{\rn}\min\{f(x),f(x+ru)\}dx\\
                      &=\frac{1}{2}\int_{\rn}f(x)dx+\frac{1}{2}\int_{\rn}f(x+ru)dx-\int_{\rn}\min\{f(x), f(x+ru)\}dx\\
                      &=\frac{1}{2}\int_{\rn}(f(x)-f(x+ru))_+dx+\frac{1}{2}\int_{\rn}(f(x+ru)-f(x))_+dx\\
                      &=\frac{1}{2}\int_{\rn}|f(x)-f(x+ru)|dx,
        \end{aligned}
    \end{equation}
    where $h(x)_+$ denotes the positive part of the function $h(x)$. Then we have
   \[\zeta(\alpha)=\bigg(\frac{\int_0^{\infty}r^{\alpha-1}(g(o)-g(ru))dr}{\int_0^{\infty}r^{\alpha-1}g(o)(1-e^{-r})dr}\bigg)^{1/\alpha}=\frac{\rho_{\Raf}(u)}{\Gamma(\alpha+1)^{1/\alpha}}\]
   for $\alpha\in(-1,0)$.

  Therefore Lemma \ref{lemma_aux} implies that
  \begin{equation}\label{inclusion-step}
      \frac{1}{\Gamma(\beta+1)^{1/\beta}}\rR_{\beta} f\subset\frac{1}{\Gamma(\alpha+1)^{1/\alpha}}\Raf.
  \end{equation}
  Letting $\alpha\rightarrow -1$, Lemma \ref{R-1} gives the right inclusion. {Since $\omega(r)=e^{-r}$ is strictly decreasing, by Lemma \ref{lemma_aux_iff}, equality holds in \eqref{inclusion-step} if and only if} $g(ru)=g(o)e^{-\lambda r}$ for some $\lambda >0$. Then Lemma \ref{iff-abg} shows that $f(x)=ae^{-\|x-x_0\|_{\triangle}}$ for $a=\|f\|_{\infty}$ and $x_0\in\triangle$, where $\triangle$ is a simplex containing the origin.
\end{proof}

Note that, in \eqref{inclusion-step}, when letting $\beta\rightarrow\infty$, the left-hand side  degenerates to  $\{o\}$. This observation motivates the consideration of the embedding $K\mapsto \chi_K$ in studying functional extensions of Theorem~\ref{GZ-reverse}. The following counterexample shows, however, that this extension fails for general log-concave functions, which naturally leads to the study of $s$-concave functions in the next subsection.

\begin{ex}\label{ex6}
    Let $n=1$ and $f(x)=2x\chi_{[0,1]}(x)$. It is clear that $\log f(x)=\log x+\log 2$ for $x\in[0,1]$, which is concave on $\supp(f)$. Then we have
    \[g(r)=(1-|r|)_+^2.\]
    In this case, we consider $\beta=1$ and $\alpha=\frac{1}{2}$ and obtain that 
\[c_{1,1}\rR_1 f=\Big[-\frac{2}{3},\frac{2}{3}\Big], \quad c_{1,\frac{1}{2}}\rR_{\frac{1}{2}} f=\Big[-\frac{16}{25}, \frac{16}{{25}}\Big],\] 
which is not consistent with the desired result $c_{n,\beta}\rR_{\beta} f\subset c_{n,\alpha}\rR_{\alpha} f$.
\end{ex}

\subsection{Results for \texorpdfstring{$s$}{}-concave functions}\hspace{\fill}

In this subsection, we focus on the case of $s$-concave functions with $s>0$. We continue to use the notation
\[g(ru)=\int_{\rn}\min\{f(x), f(x+ru)\}dx.\]
The following lemma is a direct corollary from the Borell-Brascamp-Lieb inequality, which is also mentioned in \cite[Theorem 11.2]{Gar1}.

{ \begin{lem}\label{1/ns-concave}
    Let $s>0$ and $u\in\sn$ be fixed. For an $s$-concave, integrable function $f$, the function $r\mapsto g(ru)$ is $s/(ns+1)$-concave on $\mathbb{R}$.
\end{lem}}

\begin{proof}
    We denote by
    \[N_f(x,r)=\min\{f(x), f(x+ru)\}.\]
    
    First, we prove that $N_f(x,r)$ is $s$-concave on $\rn\times\tr$. For $x_0,x_1\in\rn$, $r_0, r_1\in\tr$ and $\lambda\in[0,1]$, we denote $x_{\lambda}=(1-\lambda)x_0+\lambda x_1$ and $r_{\lambda}=(1-\lambda)r_0+\lambda r_1$. Note that
    \[N_f(x_{\lambda}, r_{\lambda})^s=\min\{f(x_{\lambda})^s, f(x_{\lambda}+r_{\lambda}u)^s\}.\]

    Since $f$ is $s$-concave and $\min\{\cdot, \cdot\}$ is a concave function on $\tr\times\tr$, we have
    \begin{equation*}
        \begin{aligned}
            N_f(x_{\lambda},r_{\lambda})^s&\ge \min\{(1-\lambda) f(x_0)^s+\lambda f(x_1)^s, (1-\lambda)f(x_0+r_0u)^s+\lambda f(x_1+r_1 u)^s\}\\
            &\ge (1-\lambda)\min\{f(x_0)^s, f(x_0+r_0 u)^s\}+\lambda \min\{f(x_1)^s, f(x_1+r_1 u)^s\}\\
            &=(1-\lambda)N_f(x_0,r_0)^s+\lambda N_f(x_1, r_1)^s,
        \end{aligned}
    \end{equation*}
    which implies the $s$-concavity of $N_f$.

    For $\lambda\in[0,1]$, and fixed $r_0,r_1\in\tr$, we set
    \[F(x)=N_f(x,r_0),~~G(x)=N_f(x,r_1),~~H(x)=N_f(x,r_{\lambda}),\]
    where $r_{\lambda}=(1-\lambda)r_0+\lambda r_1$. By the $s$-concavity of $N_f$, we have
    \[H((1-\lambda)x+\lambda y)\ge \mathcal{M}_{\lambda}^s(F(x), G(y)),\]
    where $\mathcal{M}_\lambda^s$ denotes the $s$-mean (see Section \ref{log s}). The Borell-Brascamp-Lieb inequality \eqref{BBL} then implies that
    \begin{equation}\label{BBL-application}
        \int_{\rn}H\ge \mathcal{M}_{\lambda}^{\frac{s}{ns+1}}\Big(\int_{\rn}F, \int_{\rn}G\Big),
    \end{equation}
    that is, the $\frac{s}{ns+1}$-concavity of $g(ru)$ in $r$.
\end{proof}

We begin the proof of Theorem 3 by stating the inclusion result as the following theorem. The equality case will be addressed separately. We note that the inclusion part was also proved in \cite{LSU} by the Ball-body framework. We include a direct proof below, which is also needed for the equality characterization.

\begin{thm}\label{main5-section}
    Let $s>0$ and $f\in L^1(\rn)$ be a non-zero $s$-concave function with compact support. If $-1<\alpha< \beta$, then
    \begin{equation}\label{s-reverse in steps}
        \rR_{\infty} f\subset c_{n,\beta}(s)\rR_{\beta} f\subset c_{n,\alpha}(s)\Raf\subset (n+\frac{1}{s})\|f\|_1 \Pi^* f,
    \end{equation}
    where $c_{n,\alpha}(s)=((n+\frac{1}{s})B(\alpha+1, n+\frac{1}{s}))^{-1/\alpha}$.
\end{thm}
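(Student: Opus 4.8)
\textbf{Proof proposal for Theorem \ref{main5-section}.}

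The plan is to run the same machinery as in the proof of Theorem \ref{main4-section}, but now taking as the ``model weight'' $\omega$ the function $(1-st)_+^{1/s}$ rather than $e^{-t}$, so that the Beta function replaces the Gamma function. First I would fix $u\in\sn$ and set $g(tu)=\int_{\rn}\min\{f(x),f(x+tu)\}\,dx$. By Lemma \ref{1/ns-concave}, $t\mapsto g(tu)$ is $s/(ns+1)$-concave on $\tr$, even, and attains its maximum $g(0)=\|f\|_1$ at $t=0$; hence it has the form $g(tu)=g(0)(1-\kappa(u)\,\psi(t))_+^{(ns+1)/s}$ for a suitable convex, nonnegative, increasing-through-zero profile, or more precisely $g(tu)^{s/(ns+1)}$ is concave. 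I would then apply Lemma \ref{lemma_aux} with $\omega(t)=g(0)(1-st)_+^{1/s}$ and with $\varphi(t)$ chosen so that $\omega(\varphi(t))=g(tu)^{s/(ns+1)}$ up to the normalization $g(0)$; concretely $\varphi(t)=\tfrac1s\big(1-(g(tu)/g(0))^{s/(ns+1)}\big)$. The concavity from Lemma \ref{1/ns-concave} is exactly what guarantees that $t\mapsto\varphi(t)$ and $t\mapsto\varphi(t)/t$ are increasing, so the hypotheses of Lemma \ref{lemma_aux} are met.

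With this choice, using \eqref{beta fcn} one computes $\int_0^\infty t^{\alpha-1}\omega(t)\,dt = g(0)\,s^{-\alpha}B(\alpha,1+\tfrac1s)$ for $\alpha>0$ (and the analogous $\omega(0)$-subtracted formula for $-1<\alpha<0$), while $\int_0^\infty t^{\alpha-1}\omega(\varphi(t))\,dt$ is, up to the factor $(ns+1)/s$-power being unwound, proportional to $\rho_{\Raf}(u)^\alpha$. Tracking the constants, $\zeta(\alpha)=\rho_{\rR_\alpha f}(u)\big/\big((n+\tfrac1s)B(\alpha+1,n+\tfrac1s)\big)^{1/\alpha}$, i.e. $\zeta(\alpha)=c_{n,\alpha}(s)\,\rho_{\rR_\alpha f}(u)$; here I would also use $\rR_\alpha f=(\alpha/\|f\|_1)^{1/\alpha}\Laf$ for $\alpha>0$ and \eqref{rmb-0} for $\alpha\in(-1,0)$ from Section \ref{Radial mean bodies for log-concave functions}, together with the identity $\alpha B(\alpha,1+\tfrac1s)=B(\alpha+1,1+\tfrac1s)\cdot(\text{shift})$ to line up indices. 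Since Lemma \ref{lemma_aux} says $\zeta$ is decreasing in $\alpha$ on $(-1,\infty)$, this gives the chain $c_{n,\beta}(s)\,\rho_{\rR_\beta f}(u)\le c_{n,\alpha}(s)\,\rho_{\rR_\alpha f}(u)$ for $-1<\alpha<\beta$, i.e. the middle inclusion of \eqref{s-reverse in steps}. For the two endpoints I would take limits: as $\beta\to\infty$, $c_{n,\beta}(s)\to 1$ and $\rho_{\rR_\beta f}(u)\to\rho_{{\rm D}\supp(f)}(u)=\rho_{\rR_\infty f}(u)$ (this uses that $f$ has compact support, paralleling Lemma \ref{R_infty} but now with the Beta-weight so that no degeneration occurs), yielding $\rR_\infty f\subset c_{n,\beta}(s)\rR_\beta f$; and as $\alpha\to -1^+$, Lemma \ref{R-1} gives $(1+\alpha)\|f\|_1\rho_{\Raf}(u)^{\alpha}\to\rho_{\Pi^*f}(u)^{-1}$, while $c_{n,\alpha}(s)^{\alpha}=\big((n+\tfrac1s)B(\alpha+1,n+\tfrac1s)\big)^{-1}\to (n+\tfrac1s)^{-1}$ as $\alpha\to-1$ (since $B(0^+,n+\tfrac1s)\to\infty$ is balanced against $1+\alpha\to0$ via $B(\alpha+1,n+\tfrac1s)\sim 1/(\alpha+1)$), producing the right-hand inclusion $c_{n,\alpha}(s)\Raf\subset (n+\tfrac1s)\|f\|_1\Pi^*f$.

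The main obstacle I anticipate is purely bookkeeping: verifying that the constants $c_{n,\alpha}(s)=\big(nB(\alpha+1,n+\tfrac1s)\big)^{-1/\alpha}$ as defined in the theorem statement genuinely match what Lemma \ref{lemma_aux} produces with $\omega(t)=(1-st)_+^{1/s}$, including the $s^{-\alpha}$ scaling and the passage from $B(\alpha,\cdot)$ to $B(\alpha+1,\cdot)$, and that the $n\mapsto n+\tfrac1s$ substitution is consistent throughout — in particular with the $\alpha=n$ specialization and with the $s\to\infty$ limit recovering \eqref{GZ-mian5.5}. I would double-check this by testing against the extremal function $f(x)=c(1-\|x-x_0\|_\triangle)_+^{1/s}$: for it, $g(tu)$ should be exactly a scaled $(1-s\lambda|t|)_+^{(ns+1)/s}$, forcing $\varphi(t)=\lambda t$, hence $\zeta$ constant, and then the constants must collapse to give equality in every inclusion of \eqref{s-reverse in steps}. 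The secondary subtlety is the limiting argument for $\rR_\infty f$: one needs a quantitative lower bound ${\rm D}\supp(f)\subset c_{n,\beta}(s)\rR_\beta f$ valid for finite $\beta$, which should follow from the geometric Theorem \ref{GZ-reverse} (or its $s$-concave analogue applied levelwise) combined with Proposition \ref{formula with level sets} expressing $\rho_{\Raf}$ as an average of $\rho_{\rR_\alpha\{f\ge r\}}$ over the superlevel sets; I would need the superlevel sets $\{f\ge r\}$ of an $s$-concave $f$ to be convex bodies, which holds since $f^s$ is concave.
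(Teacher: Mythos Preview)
Your approach is exactly the paper's: fix $u$, use the $s/(ns+1)$-concavity of $g(tu)$ from Lemma~\ref{1/ns-concave}, feed a Beta-type weight into Lemma~\ref{lemma_aux}, and read off the monotone chain, then pass to the limits $\beta\to\infty$ (via Lemma~\ref{R_infty}) and $\alpha\to -1^+$ (via Lemma~\ref{R-1}). The endpoint arguments are also handled the same way; your simple limit argument for $\rR_\infty f$ is the correct one, and the levelwise detour you worry about in the last paragraph is unnecessary.

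There is, however, one concrete bookkeeping slip that would derail the constants. You take $\omega(t)=g(0)(1-st)_+^{1/s}$ with the \emph{original} parameter $s$, but the concavity you have is for $g^{s'}$ with $s'=s/(ns+1)$. With your $\omega$ and your $\varphi(t)=\tfrac1s\big(1-(g(tu)/g(0))^{s'}\big)$, one gets $\omega(\varphi(t))=g(0)\big(g(tu)/g(0)\big)^{s'/s}\neq g(tu)$, so the numerator of $\zeta(\alpha)$ is not $\int_0^\infty t^{\alpha-1}g(tu)\,dt$ and the link to $\rho_{\Raf}(u)$ breaks. The paper takes instead $\omega(t)=g(0)(1-s't)_+^{1/s'}$ and $\varphi(t)=\tfrac1{s'}\big(1-(g(tu)/g(0))^{s'}\big)$; then $\omega(\varphi(t))=g(tu)$ exactly, and via \eqref{beta fcn} the denominator becomes $g(0)(s')^{-\alpha}B(\alpha,1+\tfrac1{s'})$. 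Using $\tfrac1{s'}=n+\tfrac1s$ and $\alpha B(\alpha,1+\tfrac1{s'})=(n+\tfrac1s)B(\alpha+1,n+\tfrac1s)$ then produces precisely $c_{n,\alpha}(s)$. Once you make this single substitution $s\mapsto s'$ in $\omega$ (and in the prefactor of $\varphi$), your proposal matches the paper's proof line by line.
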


\begin{proof}
     By Lemma \ref{1/ns-concave}, the function $g(ru)=\int_{\rn}\min\{f(x), f(x+ru)\}dx$ is $s^{\prime}$-concave with $s^{\prime}=s/(ns+1)$. We apply Lemma \ref{lemma_aux} with $\omega(r)=g(0)(1-s^{\prime}r)_+^{1/s^{\prime}}$ and $\varphi(r)=\frac{1}{s^{\prime}}(1-(g(ru)/g(0))^{s^{\prime}})$.
    
    Therefore, we obtain that
    \[\zeta(\alpha)=\bigg(\frac{\int_0^{\infty}r^{\alpha-1}g(ru)dr}{\int_0^{\infty }g(0) r^{\alpha-1}(1-s^{\prime}r)_+^{1/s^{\prime}}dr}\bigg)^{1/\alpha}=\frac{s^{\prime}\rho_{\Raf}(u)}{(\alpha B(\alpha, 1+\frac{1}{s^{\prime}}))^{1/\alpha}}\]
    for $\alpha>0$ and
    \[\zeta(\alpha)=\bigg(\frac{\int_0^{\infty}r^{\alpha-1}(g(0)-g(ru))dr}{\int_0^{\infty }g(0) r^{\alpha-1}((1-s^{\prime}r)_+^{1/s^{\prime}}-1)dr}\bigg)^{1/\alpha}=\frac{s^{\prime}\rho_{\Raf}(u)}{(\alpha B(\alpha, 1+\frac{1}{s^{\prime}}))^{1/\alpha}}\]
    for $-1<\alpha<0$, where we use \eqref{beta fcn} and \eqref{cal1}. Note that 
    $$\Big(\alpha B(\alpha, 1+\frac{1}{s^{\prime}})\Big)^{-1/\alpha}=\Big(\frac{1}{s^{\prime}}B(\alpha+1,\frac{1}{s^{\prime}})\Big)^{-1/\alpha}=c_{n,\alpha}(s).$$ 
    Lemma \ref{lemma_aux} then implies that
    \begin{equation}\label{reversed-inclusion-fcn2 step}
        c_{n,\beta}(s)\rR_{\beta} f\subset c_{n,\alpha}(s)\Raf,
    \end{equation}
    for $-1<\alpha<\beta$. 

     Since Lemma \ref{R_infty} ensures that $\rR_{\beta} f \to \rR_{\infty} f$ and $c_{n,\beta}(s) \to 1$ as $\beta\rightarrow \infty$, the left-most inclusion follows directly from \eqref{reversed-inclusion-fcn2 step} by letting $\beta\rightarrow\infty$.
    
    By Lemma \ref{R-1} and $c_{n,\alpha}(s)\sim\frac{\alpha+1}{n+1/s}$ when $\alpha\rightarrow -1$, we have
    \[\lim_{\alpha\rightarrow -1^+}c_{n,\alpha}(s)\rho_{\Raf}(u) =(n+\frac{1}{s})\|f\|_1\rho_{\Pi^*f}(u),\]
    which yields the right-most inclusion.
\end{proof}

The following theorem demonstrates the sharpness of \eqref{s-reverse in steps}.

\begin{thm}
    Let $s>0$. If $f(x)=a(1-\|x-x_0\|_{\triangle-x_0})_+^{1/s}$, where $a=\|f\|_{\infty}>0$ and $\triangle\subset\rn$ is a simplex containing $x_0$, then 
    \[\rR_{\infty} f= c_{n,\beta}(s)\rR_{\beta} f= c_{n,\alpha}(s)\Raf= (n+\frac{1}{s})\|f\|_1 \Pi^* f,\]
    where $-1<\alpha<\beta<\infty$ and $c_{n,\alpha}(s)$ is given in Theorem \ref{main5-section}.
\end{thm}

\begin{proof}
    By Lemma \ref{Raf-affco}, $\Raf$ is invariant under scaling and translation. We may assume,  without loss of generality, that $a=1$. Then
    \begin{equation*}
        \begin{aligned}
            \|f\|_1=\int_0^{1}|\{f\ge t\}|dt=\int_0^{1}\frac{1}{s}t^{\frac{1}{s}-1}|\{f^s\ge t\}|dt,
        \end{aligned}
    \end{equation*}
    where $\{f^s\ge t\}=(1-t)\triangle+tx_0$. Hence
    \begin{equation*}
        \|f\|_1=\int_0^{1}\frac{1}{s}t^{\frac{1}{s}-1}(1-t)^n|\triangle|dt=B(\frac{1}{s}, n+1)\frac{|\triangle|}{s}.
    \end{equation*}
    Moreover, we obtain
    \begin{equation*}
        \begin{aligned}
        g(ru)&=\int_{\rn}\min\{f(x), f(x+ru)\}dx\\
        &=\int_0^{1}|\{f\ge t\}\cap (\{f\ge t\}+ru)|dt\\
        &=\int_0^1\frac{1}{s}t^{\frac{1}{s}-1}|(1-t)\triangle\cap ((1-t)\triangle+ru)|dt\\
        &=\int_0^1\frac{1}{s}t^{\frac{1}{s}-1}(1-t)^n\Big|\triangle\cap (\triangle+\frac{r}{1-t}u)\Big|dt.
        \end{aligned}
    \end{equation*}
        By \cite[Lemma 3.4]{GZ}, $|\triangle\cap (\triangle+ ru)|^{1/n}$ is an affine function with respect to $r>0$. Therefore, there exists $\vartheta>0$ such that
    \[\Big|\triangle\cap(\triangle+\frac{r}{1-t})u\Big|=|\triangle|\Big(1-\vartheta \frac{r}{1-t}\Big)_+^n,\]
    where $\vartheta$ depends on $u$.

A straightforward calculation shows that
\[g(ru)=\frac{1}{s}B(\frac{1}{s},n+1)|\triangle|(1-\vartheta r)_+^{\frac{ns+1}{s}}.\]  
    We set $s^{\prime}=s/(ns+1)$. For $-1<\alpha<0$ and $\alpha>0$, by taking $\omega(r)=g(o)(1-s^{\prime}r)_+^{1/s^{\prime}}$ and $\varphi(r)=\frac{1}{s^{\prime}}(1-(g(ru)/g(o))^{s^{\prime}})$ in Lemma \ref{lemma_aux}, we obtain
    \[c_{n,\alpha}(s)\rho_{\Raf}(u)=\frac{1}{s^{\prime}}\zeta(\alpha)=\frac{1}{s^{\prime}}\Big(\frac{\int_0^{\infty}r^{\alpha-1}(1-\vartheta r)_+^{1/s^{\prime}}dr}{\int_0^{\infty}r^{\alpha-1}(1-s^{\prime}r)_+^{1/s^{\prime}}dr}\Big)^{1/\alpha}=\frac{1}{\vartheta},\]
    where the first equality is from the calculation in the proof of Theorem \ref{main5-section}.
    
    This implies that 
    \[c_{n,\beta}(s)\rR_{\beta} f=c_{n,\alpha}(s) \Raf,\quad\quad-1<\alpha<\beta<\infty.\]
    Letting $\alpha\rightarrow -1$ and $\beta\rightarrow\infty$ completes the proof.
    %, we have
    %\[\rR_{\infty} f= c_{n,\beta}(s)\rR_{\beta} f= c_{n,\alpha}(s)\Raf= \Big(n+\frac{1}{s}\Big)\|f\|_1 \Pi^* f,\]
    %which concludes the proof.
\end{proof}

To establish the equality characterization, we need an auxiliary lemma
(Lemma \ref{key lemma thm5}) based on the equality case of the Borell--Brascamp--Lieb inequality \eqref{BBL}. We first fix some notation.

Let $e_1,\ldots,e_n$ be an orthonormal basis of $\rn$. We denote by
$\triangle_n$ the standard simplex, namely
\[\triangle_n=\operatorname{conv}\{o,e_1,\ldots,e_n\}=\Big\{x=(x_1,\ldots,x_n)\in\rn:\ x_i\ge0,\ \sum_{i=1}^n x_i\le1\Big\}.\]
For $u=(u_1,\ldots,u_n)\in\sn$, set
\[u_i^+=\max\{u_i,0\},\qquad u_i^-=\max\{-u_i,0\},\]
and
\begin{equation}\label{u+-}
    u^+=(u_1^+,\ldots,u_n^+),\qquad
    u^-=(u_1^-,\ldots,u_n^-).
\end{equation}
Thus $u=u^+-u^-$. We also set
\begin{equation}\label{c0}
    c_0=\max\Big\{\sum_{i=1}^n u_i^+,\sum_{i=1}^n u_i^-\Big\}.
\end{equation}

These notations will be used in the next two lemmas. The first lemma records
a simple relation between the gauge of a shifted simplex and barycentric
coordinates; the second one is a key step for Lemma \ref{key lemma thm5}.

\begin{lem}\label{simplex-gauge}
Let \(v_0=o\) and \(v_i=e_i\), \(i=1,\ldots,n\). For \(x\in\rn\), write
\[x=\sum_{i=0}^n \lambda_i(x)v_i,\qquad \sum_{i=0}^n\lambda_i(x)=1,\]
where $\lambda_0(x)=1-\sum_{i=1}^n x_i$ and $\lambda_i(x)=x_i$. 

Let \(x_0\in\triangle_n\), and set $\mathcal{I}_{x_0}=\{i:\lambda_i(x_0)>0\}$. Then, for every \(x\in\triangle_n\),
\[(1-\|x-x_0\|_{\triangle_n-x_0})_+=\min_{i\in \mathcal{I}_{x_0}}\frac{\lambda_i(x)}{\lambda_i(x_0)}.\]
\end{lem}

\begin{proof}
By the definition of the gauge function,
\[\|x-x_0\|_{\triangle_n-x_0}=\inf\{r\ge0:\ x-x_0\in r(\triangle_n-x_0)\}.\]

Suppose that $x-x_0\in r(\triangle_n-x_0)$. Then there exists
$y\in\triangle_n$ such that
\[x=x_0+r(y-x_0)=(1-r)x_0+ry.\]
Taking barycentric coordinates gives
\[\lambda_i(x)=(1-r)\lambda_i(x_0)+r\lambda_i(y), \qquad i=0,\ldots,n.\]
Since $y\in\triangle_n$, we have $\lambda_i(y)\ge0$. Hence, $\lambda_i(x)\ge (1-r)\lambda_i(x_0)$ for $i\in \mathcal{I}_{x_0}$. Therefore
\[r\ge 1-\frac{\lambda_i(x)}{\lambda_i(x_0)},\qquad i\in \mathcal{I}_{x_0}.\]
It follows that
\[r\ge1-\min_{i\in \mathcal{I}_{x_0}}\frac{\lambda_i(x)}{\lambda_i(x_0)}.\]
Taking the infimum over all admissible \(r\), we obtain
\begin{equation}\label{geq}
    \|x-x_0\|_{\triangle_n-x_0}\ge1-\min_{i\in \mathcal{I}_{x_0}}\frac{\lambda_i(x)}{\lambda_i(x_0)}.
\end{equation}

Conversely, set
\[m=\min_{i\in \mathcal{I}_{x_0}}
\frac{\lambda_i(x)}{\lambda_i(x_0)},
\qquad
r=1-m.\]
Since $x\in\triangle_n$, we have $m\ge0$, and also $m\le1$. Thus 
$r\ge0$. If $r=0$, then $m=1$, which forces $x=x_0$, and the claim is
trivial. Assume $r>0$. Define
\[\lambda_i=\frac{\lambda_i(x)-(1-r)\lambda_i(x_0)}{r},\qquad i=0,\ldots,n.\]
For $i\in \mathcal{I}_{x_0}$, the definition of $m$ gives $\lambda_i\ge0$, while for
$i\notin \mathcal{I}_{x_0}$, we have $\lambda_i(x_0)=0$ and hence
\(\lambda_i=\lambda_i(x)/r\ge0\). Moreover,
\[\sum_{i=0}^n\lambda_i=\frac{1-(1-r)}{r}=1.\]
Thus $y=\sum_{i=0}^n\lambda_i v_i\in\triangle_n$. By construction, $x=(1-r)x_0+ry$,
 thus $r$ is admissible. Therefore
\begin{equation}\label{leq}
    \|x-x_0\|_{\triangle_n-x_0}\le r=1-\min_{i\in \mathcal{I}_{x_0}}\frac{\lambda_i(x)}{\lambda_i(x_0)}.
\end{equation}
Combining \eqref{geq} and \eqref{leq} proves the lemma.
\end{proof}

\begin{lem}\label{key}
Let $f\in L^1(\rn)$ be a non-zero, non-negative, upper semicontinuous concave function with
$\supp (f)=\triangle_n$. Suppose that, for every $u\in\sn$ and every
$r\in(0,1/c_0)$,
\begin{equation}\label{cond}
    \min\Big\{f\big((1-c_0r)x+ru^-\big),f\big((1-c_0r)x+ru^+\big)\Big\}=(1-c_0r)f(x)
\end{equation}
holds for almost all $x\in\triangle_n$, where $u^+$, $u^-$, and $c_0$ are defined by \eqref{u+-} and \eqref{c0}. Then
\[f(x)=a\left(1-\|x-x_0\|_{\triangle_n-x_0}\right)_+,\]
where $a=\|f\|_\infty$ and $x_0\in\triangle_n$ is a maximum point of $f$.
\end{lem}

\begin{proof}
Let $x_0\in\triangle_n$ be a maximum point of $f$. Without loss of generality, we may assume that $a=f(x_0)=1$.

Since $f$ is finite and concave, it is continuous on
${\rm int}\triangle_n$, which denotes the interior of $\triangle_n$. Moreover, for fixed $u$ and $r$, the two
sides of \eqref{cond} are continuous functions of $x$ on
${\rm int}\triangle_n$. Hence the almost everywhere identity
\eqref{cond} extends to every $x\in{\rm int}\triangle_n$.

We also note that $f>0$ on ${\rm int}\triangle_n$. Indeed, if
$f$ vanished at an interior point, then the concavity and non-negativity of
$f$ would force $f\equiv0$ on $\triangle_n$, contradicting the assumption
that $f$ is non-zero.

Let $x\in{\rm int}\triangle_n$ be a differentiability point of
$f$. Define the supporting affine function
\[T_x(y)=f(x)+\nabla f(x)\cdot(y-x),\qquad y\in\triangle_n.\]
Since $f$ is concave,
\[f(y)\le T_x(y),\qquad y\in\triangle_n.\]
In particular, as $f\ge0$,
\[T_x(v_i)\ge0,\qquad i=0,\ldots,n.\]

\noindent{Step 1:} We first derive a restriction on $T_x$. 

Taking $u=e_i$, we have
$u^+=e_i$, $u^-=o$, and $c_0=1$. Thus, for $0<t<1$,
\[\min\{f((1-t)x+te_i),f((1-t)x)\}=(1-t)f(x).\]
Since \(f\) is differentiable at \(x\),
\[
f((1-t)x+tv)
=
(1-t)f(x)+tT_x(v)+o(t),
\qquad t\to0^+,
\]
for each fixed \(v\in\triangle_n\). Hence, after subtracting
\((1-t)f(x)\), dividing by \(t\), and letting \(t\to0^+\), we obtain
\begin{equation}\label{io}
    \min\{T_x(e_i),T_x(o)\}=0.
\end{equation}

Similarly, for $i\ne j$, take $u=\frac{e_i-e_j}{\sqrt2}$. Then 
\[
u^+=\frac{e_i}{\sqrt2},\qquad
u^-=\frac{e_j}{\sqrt2},\qquad
c_0=\frac1{\sqrt2}.\]
Putting \(t=c_0r\), \eqref{cond} gives
\[\min\{f((1-t)x+te_i),f((1-t)x+te_j)\}=(1-t)f(x).\]
Similarly, after subtracting $(1-t)f(x)$, dividing by $t$, and letting $t\to0^+$, we obtain
\begin{equation}\label{ij}
    \min\{T_x(e_i),T_x(e_j)\}=0.
\end{equation}

From \eqref{io} and \eqref{ij}, among the numbers $T_x(v_0),T_x(v_1),\ldots,T_x(v_n)$, at most one is positive. On the other hand,
\[f(x)=T_x(x)=\sum_{i=0}^n\lambda_i(x)T_x(v_i)>0,\]
and all $\lambda_i(x)$ are positive because $x\in{\rm int}\triangle_n$.
Therefore exactly one of these numbers is positive. Hence there exist
$k(x)\in\{0,\ldots,n\}$ and $A_x>0$ such that
\[T_x(y)=A_x\lambda_{k(x)}(y),\qquad y\in\triangle_n.\]

\noindent{Step 2:} We next show that $f$ is the minimum of finitely many multiples of the
barycentric coordinates. 

Let $D$ be the set of differentiability points of $f$ in ${\rm int}\triangle_n$. Since $f$ is concave,
\[f(y)\le T_x(y),\qquad x\in D,\ y\in\operatorname{int}\triangle_n.\]
Thus
\[f(y)\le \inf_{x\in D}T_x(y).\]

Conversely, since $f$ is differentiable almost everywhere, we choose \(x_j\in D\) with \(x_j\to y\). Since \(f\) is locally
Lipschitz on \(\operatorname{int}\triangle_n\), the gradients \(\nabla f(x_j)\)
are bounded for \(j\) large. Therefore
\[\begin{aligned}
|T_{x_j}(y)-f(y)|\le |f(x_j)-f(y)|+|\nabla f(x_j)|\,|y-x_j|\to0.
\end{aligned}\]
Hence
\[
f(y)=\inf_{x\in D}T_x(y),
\qquad y\in\operatorname{int}\triangle_n.
\]

For \(k=0,\ldots,n\), let
\[D_k=\{x\in D:\ T_x(y)=A_x\lambda_k(y),~\text{for all}~y\in{\rm int}\triangle_n\},\]
and define
\[
A_k=\inf_{x\in D_k}A_x,
\]
with the convention that \(A_k=+\infty\) if \(D_k=\varnothing\). Since
\(\lambda_k(y)>0\) for \(y\in\operatorname{int}\triangle_n\), we get
\[
f(y)=\min_{0\le k\le n}A_k\lambda_k(y),
\qquad y\in\operatorname{int}\triangle_n,
\]
where terms with \(A_k=+\infty\) are simply ignored. Moreover, no finite
\(A_k\) can be equal to \(0\), because \(f>0\) in
\(\operatorname{int}\triangle_n\). Thus every finite \(A_k\) is strictly
positive.

Let $\mathcal{I}=\{k:\ A_k<+\infty\}$. Then
\[f(y)=\min_{k\in \mathcal{I}}A_k\lambda_k(y),\qquad y\in\operatorname{int}\triangle_n.\]
By upper semicontinuity and concavity, this identity extends to \(\triangle_n\).

\noindent{Step 3:} We aim to show that $\lambda_i(x_0)=A_i^{-1}$.

Since \(f(x_0)=1\), we have
\[1=\min_{k\in \mathcal{I}}A_k\lambda_k(x_0).\]
Therefore
\[\lambda_k(x_0)\ge A_k^{-1},\qquad k\in \mathcal{I}.\]
Summing over \(k\in I\), we get
\[1=\sum_{k=0}^n\lambda_k(x_0)\ge\sum_{k\in \mathcal{I}}A_k^{-1}.\]

Now choose $z\in\triangle_n$ by prescribing its barycentric
coordinates as
\[\lambda_k(z)=\frac{A_k^{-1}}{\sum_{i\in\mathcal{I}}A_i^{-1}},\qquad k\in \mathcal{I},\]
and $\lambda_k(z)=0$ for $k\notin \mathcal{I}$. For every $k\in \mathcal{I}$,
\[A_k\lambda_k(z)=\frac{1}{\sum_{i\in\mathcal{I}}A_i^{-1}}.\]
Hence
\[f(z)=\frac{1}{\sum_{i\in\mathcal{I}}A_i^{-1}}\leq 1,\]
which implies that $\sum_{i\in\mathcal{I}}A_i^{-1}=1$.

Since also \(\lambda_k(x_0)\ge A_k^{-1}\) for \(k\in \mathcal{I}\) and
\(\sum_{k=0}^n\lambda_k(x_0)=1\), it follows that
\[\lambda_k(x_0)=A_k^{-1},\qquad k\in \mathcal{I},\]
and $\lambda_k(x_0)=0$ for $k\notin \mathcal{I}$. Therefore, for \(y\in\triangle_n\),
\[f(y)=\min_{k\in \mathcal{I}}A_k\lambda_k(y)=
\min_{k:\lambda_k(x_0)>0}\frac{\lambda_k(y)}{\lambda_k(x_0)}.\]
By Lemma \ref{simplex-gauge}, $f(y)=a\left(1-\|y-x_0\|_{\triangle_n-x_0}\right)_+$, which concludes the proof.
\end{proof}

We are now in a position to prove the following key result for the equality characterization in Theorem 3.

\begin{lem}\label{key lemma thm5}
    Let $s>0$ and $f\in L^1(\rn)$ be a non-zero, non-negative, upper semicontinuous $s$-concave function with compact support. For $u\in\sn$, let
    \[g(ru)=\int_{\rn}\min\{f(x), f(x+ru)\}dx.\]
    Suppose that for every $u\in\sn$, there exists $\vartheta(u)>0$ such that $g(ru)=\|f\|_1(1-\vartheta(u) r)_+^{\frac{ns+1}{s}}$ for $r>0$, then $f(x)=a(1-\|x-x_0\|_{\triangle-x_0})_+^{1/s}$, where $x_0$ is the maximizer of $f$ with $a=f(x_0)=\|f\|_{\infty}$, and $\triangle$ is a simplex in $\rn$ containing $x_0$.
\end{lem}

\begin{proof}
    We assume $\|f\|_{\infty}=1$ without loss of generality. Since $g(ru)=g(-ru)$, we set 
    \[g(ru)=\int_{\rn}\min\{f(x), f(x-ru)\}dx,\]
     and $N_f(x,r)=\min\{f(x), f(x-ru)\}$ to simplify notation. Let $r \ge 0$ and $\lambda \in [0,1]$ be fixed, and define the functions $F, G, H : \mathbb{R}^n \to \mathbb{R}$ by
    \[F(x)=N_f(x,0)=f(x),~~G(x)=N_f(x, r),~~H(x)=N_f(x,\lambda r),\]
   which then satisfy 
   \[H((1-\lambda)x+\lambda y)\ge \Big((1-\lambda)F(x)^s+\lambda G(y)^s\Big)^{1/s}=\mathcal{M}_{\lambda}^{s}(F(x), G(y))\]
   for all $x, y \in \mathbb{R}^n$ . 
    
    Since $r\mapsto g(ru)^{\frac{s}{ns+1}}$ is an affine function with respect to $r$, 
    \[\int_{\rn}H=\mathcal{M}_{\lambda}^{\frac{s}{ns+1}}\Big(\int_{\rn}F, \int_{\rn}G\Big).\]
    Hence, equality holds in the Borell-Brascamp-Lieb inequality \eqref{BBL} for $F,G,H$.

    By Theorem \ref{BBL-eq}, there exists $c_r=\Big(\frac{|\supp (G)|}{|\supp (f)|}\Big)^{1/n}$ and $y_r\in\rn$ such that
    \begin{equation}\label{g-1}
        \supp(G)=c_r\,\supp(f)+y_r,
    \end{equation}
    while the definition of $G$  gives
    \begin{equation}\label{g-2}
        \supp(G)=\supp(f)\cap (\supp(f)+ru).
    \end{equation}
    Since $u\in\sn$ is arbitrarily fixed, \eqref{g-1} and \eqref{g-2} imply that $\supp(f)\cap (\supp(f)+ru)$ is homothetic to $\supp(f)$ for all $u\in\sn$. By \cite[Lemma 3.4]{GZ},  $\supp(f)$ is a simplex denoted by $\triangle$.

   By Lemma \ref{Raf-affco}, $\Ra f$ is invariant under translations and $\Ra(f\circ\phi^{-1})=\phi\Raf$ for all $\phi\in {\rm GL}(n)$. It suffices to consider the case where $\triangle$ is the standard simplex, that is,
    \[\triangle=\triangle_n=\Big\{x=(x_1,\ldots,x_n)\in\rn: x_i\ge 0,~~\sum_{i=1}^nx_i\leq 1\Big\}.\]
   By \eqref{g-1} and \eqref{g-2}, we have
    \[c_r=1-c_0r,~~\text{and}~~y_r=ru^+,\]
    where $c_0$ is defined by \eqref{c0}. 
    
    Moreover, by Theorem \ref{BBL-eq}, there exists a $(\lambda_0,s)$-concave function $\Phi:\supp(F)\rightarrow [0,\infty)$, with $\lambda_0=\frac{\lambda c_0}{1-\lambda+\lambda c_0}$, satisfying
\begin{equation*}
        \left\{
                \begin{aligned}
                &F(x)=\Phi(x)&&,\\
                &G(c_rx+y_r)=c_r^{\frac{1}{s}}\Phi(x)&&,\\
                &H((1-\lambda+\lambda c_0)x+\lambda y_r)=\Big[\mathcal{M}_{\lambda}^{\frac{s}{ns+1 }}\Big(1, c_r^{\frac{ns+1}{s}}\Big)\Big]^{\frac{1}{ns+1}}\Phi(x)&&,
                \end{aligned}\right.
    \end{equation*}
    for almost all $x\in\triangle_n$. By the definition of $F$, we have $\Phi(x)=F(x)=f(x)$, and
    \begin{equation}\label{BBL-eq-G}
        G((1-c_0r)x+y_r)^s=\min\{f((1-c_0r)x+ru^-)^s, f((1-c_0r)x+ru^+)^s\}=(1-c_0r)f(x)^s.
    \end{equation}
    By applying Lemma \ref{key} to $f^s$, we complete the proof.
\end{proof}

As a consequence, Lemma \ref{key lemma thm5} derives the following theorem, which provides the full equality characterization in Theorem 3.

\begin{thm}
    Let $s>0$ and  $c_{n,\alpha}(s)$ be as  defined in Theorem 3. Let $f\in L^1(\rn)$ be a non-zero, non-negative, upper semicontinuous $s$-concave function with compact support. For $-1<\alpha<\beta<\infty$, if
    \[c_{n,\beta}(s)\rR_{\beta}f=c_{n,\alpha}(s)\Raf,\]
    then $f(x)=a(1-\|x-x_0\|_{\triangle-x_0})_+^{1/s}$, where $x_0$ is the maximizer of $f$ with $a=f(x_0)=\|f\|_{\infty}$, and $\triangle$ is a simplex in $\rn$ containing $x_0$.
\end{thm}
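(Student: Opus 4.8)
The plan is to reduce the stated set identity to a one-dimensional rigidity statement for the function $g(tu)$ and then apply Lemma~\ref{key lemma thm5}. Fix $u\in\sn$ and set $g(tu)=\int_{\rn}\min\{f(x),f(x+tu)\}\,dx$. By Lemma~\ref{1/ns-concave} the map $t\mapsto g(tu)$ is $s'$-concave with $s'=s/(ns+1)$, and by Lemma~\ref{abg2.1} it is even with maximum value $g(0)=\|f\|_1$. Exactly as in the proof of Theorem~\ref{main5-section}, I would apply Lemma~\ref{lemma_aux} to $\omega(t)=g(0)(1-s't)_+^{1/s'}$ and $\varphi(t)=\tfrac1{s'}\bigl(1-(g(tu)/g(0))^{s'}\bigr)$: using \eqref{beta fcn}, \eqref{cal1} and $g(0)=\|f\|_1$, the resulting continuous decreasing function $\zeta=\zeta_u$ satisfies $\zeta_u(\gamma)=s'\,c_{n,\gamma}(s)\,\rho_{\rR_\gamma f}(u)$ for all $\gamma\in(-1,\infty)$. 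Since $c_{n,\beta}(s)\rR_\beta f=c_{n,\alpha}(s)\Raf$ is an equality of radial functions, this gives $\zeta_u(\alpha)=\zeta_u(\beta)$ for every $u\in\sn$.

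Next I would invoke Lemma~\ref{lemma_aux_iff}. It is stated for strictly decreasing $\omega$, whereas here $\omega$ is strictly decreasing only on $[0,1/s']$ and vanishes on $[1/s',\infty)$; however this causes no harm. Following the proof of Lemma~\ref{lemma_aux_iff}, the equality $\zeta_u(\alpha)=\zeta_u(\beta)$ produces $\omega(\varphi(t))=\omega(\lambda t)$ for all $t\ge0$, with $\lambda=\zeta_u(\alpha)^{-1}$. On the interval where $g(tu)>0$ we have $\varphi(t)<1/s'$, so $\omega(\varphi(t))>0$, hence $\omega(\lambda t)>0$, i.e.\ $\lambda t<1/s'$; since $\omega$ is strictly decreasing on $[0,1/s']$ this forces $\varphi(t)=\lambda t$ there. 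Unwinding the definition of $\varphi$, this means $g(tu)^{s'}=g(0)^{s'}(1-s'\lambda t)$ wherever $g(tu)>0$; since $g$ is even, $t\mapsto g(tu)^{s/(ns+1)}$ is affine on its support, for every $u\in\sn$.

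Finally, Lemma~\ref{key lemma thm5} applies verbatim: $g(tu)^{s/(ns+1)}$ being linear in $t$ for every fixed $u\in\sn$ yields $f(x)=c(1-\|x-x_0\|_{\triangle})_+^{1/s}$ for some $c>0$, $x_0\in\rn$ and a simplex $\triangle\subset\rn$ containing the origin, which is the assertion of the theorem. I expect the only genuine obstacle to be the middle step — checking that the non-strict monotonicity of $\omega$ on $[1/s',\infty)$ does not obstruct the rigidity conclusion of Lemma~\ref{lemma_aux_iff}; the resolution above uses that $\varphi$ never leaves $[0,1/s']$. The first step is bookkeeping already carried out in the proofs of Theorem~\ref{main5-section} and of the sufficiency statement preceding this theorem, and the last step delegates all the geometry — the identification of the simplex via the equality case of the Borell--Brascamp--Lieb inequality (Theorem~\ref{BBL-eq}) and \cite[Lemma~3.4]{GZ} — to Lemma~\ref{key lemma thm5}.
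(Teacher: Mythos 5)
Your proof is correct and follows essentially the same route as the paper's: reduce the set identity to $\zeta_u(\alpha)=\zeta_u(\beta)$ for each $u\in\sn$ via Lemma~\ref{lemma_aux} with $\omega(t)=g(0)(1-s't)_+^{1/s'}$ and $\varphi(t)=\frac1{s'}\bigl(1-(g(tu)/g(0))^{s'}\bigr)$, invoke Lemma~\ref{lemma_aux_iff} to force $\varphi(t)=\lambda t$ (so $g(tu)^{s'}$ is affine on its support), and conclude with Lemma~\ref{key lemma thm5}. Your extra paragraph addressing the fact that $\omega$ here is strictly decreasing only on $[0,1/s']$ --- and that $\varphi$ stays in that range wherever $g(tu)>0$ --- is a worthwhile clarification of a point the paper compresses into the phrase ``strictly decreasing on its support.''
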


\begin{proof}
   Let $g(ru)=\int_{\rn}\min\{f(x), f(x+ru)\}dx$ for a fixed $u\in\sn$ and $r\in\mathbb{R}$. Setting  $\omega(r)=g(o)(1-s^{\prime}r)_+^{1/s^{\prime}}$ and $\varphi(r)=\frac{1}{s^{\prime}}(1-(g(ru)/g(o))^{s^{\prime}})$ in Lemma \ref{lemma_aux}, we obtain
    \[\zeta(\alpha)=s^{\prime}c_{n,\alpha}(s)\rho_{\Raf}(u),\]
    where $s^{\prime}=s/(ns+1)$.   Then, the equality  $c_{n,\beta}(s)\rR_{\beta}f=c_{n,\alpha}(s)\Raf$ implies that $\zeta(\alpha)=\zeta(\beta)$. %where $\zeta$ is given in Theorem \ref{main5-section}. 
    Since $\omega$ is strictly decreasing on its support, Lemma \ref{lemma_aux_iff} yields $\vartheta=\zeta(\alpha)^{-1}$ such that
    \[\varphi(r)=\frac{1}{s^{\prime}}\Big(1-\Big(\frac{g(ru)}{g(o)}\Big)^{s^{\prime}}\Big)=\vartheta r.\]
    This shows that $g(ru)^{s^\prime}$ is affine in $r$, and the conclusion follows  from Lemma~\ref{key lemma thm5}.
\end{proof}

We conclude this section by  showing the Rogers-Shephard inequality and Zhang inequality for $s$-concave functions, which are special cases of Theorem 3.

\begin{thm}[Rogers-Shephard inequality for $s$-concave functions]
    Let $s>0$ and $f\in L^1(\rn)$ be a non-zero, non-negative, $s$-concave function with compact support. Then
    \[|{\rm D}\, \supp(f)|\leq \frac{\Pi_{k=1}^n(k+\frac{1}{s})}{n\cdot n!\|f\|_1}\di\min\{f(x),f(y)\}dxdy.\]
    If $f$ is upper semicontinuous, equality holds if and only if $f(x)=a(1-\|x-x_0\|_{\triangle-x_0})_+^{1/s}$, where $x_0$ is the maximizer of $f$ with $a=f(x_0)=\|f\|_{\infty}$, and $\triangle$ is a simplex in $\rn$ containing $x_0$.
\end{thm}

%\frac{s\Gamma(n+1+\frac{1}{s})}{n\Gamma(n+1)\Gamma(\frac{1}{s})\|f\|_1}

\begin{thm}[Zhang's inequality for $s$-concave functions]
    Let $s>0$ and $f\in L^1(\rn)$ be a non-zero, non-negative, $s$-concave function with compact support. Then
    \[\di\min\{f(x), f(y)\}dxdy\leq \frac{n\cdot n!(n+\frac{1}{s})^n}{\Pi_{k=1}^n(k+\frac{1}{s})}\|f\|_1^{n+1}|\Pi^*f|.\]
    If $f$ is upper semicontinuous, equality holds if and only if $f(x)=a(1-\|x-x_0\|_{\triangle-x_0})_+^{1/s}$, where $x_0$ is the maximizer of $f$ with $a=f(x_0)=\|f\|_{\infty}$, and $\triangle$ is a simplex in $\rn$ containing $x_0$.
\end{thm}

\vskip 10pt

\noindent{\bf Acknowledgements:} 
This research was funded in whole or in part by the Austrian Science Fund (FWF) doi/10.55776/37030. For open access purposes, the authors have applied a CC BY public copyright license to any author accepted manuscript version arising from this submission.

\bibliographystyle{abbrv}
%\bibliography{new_bib}

\end{document}